\newtheorem{theorem}{Theorem}[section]
\newtheorem{corollary}{Corollary}[section]
\newtheorem{Algorithm}{Algorithm}[section]
\title{Reverse time migration for imaging periodic obstacles with electromagnetic plane wave}
\author{Lide Cai\thanks{\footnotesize Department of Mathematical Sciences, Tsinghua University, Beijing 100084, China. 
(cld19@mails.tsinghua.edu.cn).}
\and Junqing Chen\thanks{\footnotesize
Department of Mathematical Sciences, Tsinghua University, Beijing
100084, China. The work of this author was partially supported by National Key  R\&D Program of China 2019YFA0709600, 2019YFA0709602. (jqchen@tsinghua.edu.cn).}
}
\begin{document}
\maketitle
\begin{abstract}
  We propose reverse time migration (RTM) methods for the imaging of periodic obstacles using only measurements from lower or upper side of the obstacle arrays at a fixed frequency. We analyze the resolution of the lower side and upper side RTM methods in terms of the propagating part of the Rayleigh expansion, Helmholtz-Kirchhoff equation and the distance of the measurement surface to the obstacle arrays. We give some numerical experiments to justify the competitive efficiency of our imaging functionals and the robustness against noises.   
\end{abstract}
{\footnotesize{\bf Mathematics Subject Classification}(MSC2020): 78A46, 35R30, 65N21 }\\
{\footnotesize{\bf Keywords}:Reverse time migration, periodic structure, inverse scattering problem, resolution analysis}	

\section{Introduction}\label{sect1}
In this paper, we consider the inverse scattering problem of time-harmonic electromagnetic plane waves by periodic obstacles.
For simplicity, we assume that the periodic structure is invariant in the $x_3$ direction, and 
the scatterer is periodic in the $x_{1}$ direction, with periodicity $\Lambda$. Specifically, we have
\begin{equation}
\gamma(x_{1}+\Lambda,x_{2})=\gamma(x_{1},x_{2}).
\end{equation} 
We further assume that $\gamma(x)-1$ is  compactly supported in each $\Gamma$-periodic unit of $\mathbb{R}^{2}$ and the central unit is denoted by (see Figure \ref{fig1})
\begin{figure} 
 \centering{
 \includegraphics[width=10cm]{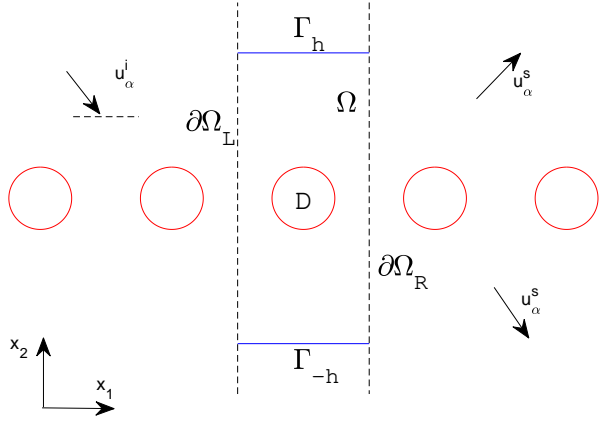}
 \caption{\label{fig1} A sample region, for periodicity $\Lambda = 2\pi$, the measurement is taken on line segment $\Gamma_{\pm h}$.}
 }
 \end{figure}
\begin{equation}
\Omega=\lbrace (x_{1},x_{2})|x_{1}\in (-\frac{\Lambda}{2},\frac{\Lambda}{2}), x_2\in \mathbb{R}\rbrace.
\end{equation}
The left and right boundaries of $\Omega$ are denoted by $\partial\Omega_{L},\partial\Omega_{R}$ respectively.  We denote the scatterer in $\Omega$ by $D$, which is the support of $\gamma(x)-1$ in $\Omega$.  

Let us consider the TE polarization in the followings. That's to say, the electric field is along the $x_3$ direction and depends on $x_1,x_2$. The incident wave is given by $u_{\alpha}^{inc}(x)=e^{i\alpha x_{1}-i\beta x_{2}}$, where $\alpha=k\cos\theta,\beta=\sqrt{k^{2}-\alpha^{2}}=k\sin\theta$, $k>0$ is the wave number and $\theta\in (0,\pi)$ is the incident angle with respect to the $x_{1}$-direction. The total field $u_\alpha$ 
is governed by the Helmholtz equation
\begin{eqnarray}
\Delta u_{\alpha} + k^{2} \gamma(x)u_{\alpha} = 0,x\in\mathbb{R}^{2}, \label{Helmholtz}
\end{eqnarray}
where $u_\alpha$ is given by
\begin{equation}
u_{\alpha}=u_{\alpha}^{s}+u^{inc}_{\alpha},x\in\mathbb{R}^{2}.\label{total}
\end{equation}
The scattered wave $u^{s}_{\alpha}$
is an $\alpha$-quasi periodic function in the $x_{1}$-direction, namely,
\begin{eqnarray*}
u^{s}_{\alpha}(x_{1}+\Lambda,x_{2})e^{-i\Lambda\alpha}=u_{\alpha}^s(x_{1},x_{2}).
\end{eqnarray*}
Furthermore,  $u^{s}_{\alpha}$ satisfies the Rayleigh expansion condition which describes the radiation of the scattered wave when $x_2\rightarrow \pm \infty $, 
\begin{eqnarray*}
u_{\alpha}^{s}(x_{1},x_{2})&=&\sum_{m\in\mathbb{Z}}u_{m,\alpha}^{s+}e^{i\alpha_{m}x_{1}+i\beta_{m}x_{2}},x_2\in \Omega_{H}^{+},\\
u_{\alpha}^{s}(x_{1},x_{2})&=&\sum_{m\in\mathbb{Z}}u_{m,\alpha}^{s-}e^{i\alpha_{m}x_{1}-i\beta_{m}x_{2}},x_2\in \Omega_{H}^{-},
\end{eqnarray*}
where, for $H>0$,   
\begin{eqnarray*}
\Omega_{H}^{+}&=&\lbrace (x_{1},x_{2})|x_{1}\in(-\frac{\Lambda}{2},\frac{\Lambda}{2}),x_{2}\geq H\rbrace,\\
\Omega_{H}^{-}&=&\lbrace (x_{1},x_{2})|x_{1}\in(-\frac{\Lambda}{2},\frac{\Lambda}{2}),x_{2}\leq -H\rbrace,
\end{eqnarray*}
which correspond to the outer diffractive regions, with 

\begin{eqnarray*}
\alpha_{m}=\alpha+\frac{2\pi}{\Lambda} m, m\in\mathbb{Z}
\end{eqnarray*}
and
\begin{eqnarray*}
\beta_{m}=\left\{\begin{array}{ll}\sqrt{k^{2}-\alpha_{m}^{2}} , &|\alpha_{m}|\leq k,\\
i\sqrt{\alpha_{m}^{2}-k^{2}}, &|\alpha_{m}|>k.
\end{array}\right.
\end{eqnarray*}
We further assume that $|\alpha_{m}|\neq k$, that is, $\alpha$ is not a wood's anomaly corresponding to $k$. As $\beta_{n}$ changes from real to imaginary as $\alpha_{n}$ passes $k$, we define index sets $B_{\alpha}$ of terms corresponding to propagating plane waves and $U_{\alpha}$ corresponding to evanescent plane waves which are
\begin{eqnarray*}
B_{\alpha}=\lbrace n\in \mathbb{Z}||\alpha_{n}|< k\rbrace \mbox{ and }
U_{\alpha}=\lbrace n\in \mathbb{Z}||\alpha_{n}|> k\rbrace.
\end{eqnarray*}  

Periodic scattering problem has long been an important topic in electromagnetic theory. It appears in extensive areas such as the optics, photonics and phononics \cite{Ammari, Bao1}.    Ever since lord Rayleigh's pioneering work in the early 20th century, a considerable amount of work has been done for the scattering problem of diffractive optics. 
Mathematically, the well-posedness of the above forward scattering problem is established, especially for the case of diffractive layers. See \cite{Kirsch1} for example. Numerically, 
in recent years, we have seen a rapid development of fast and reliable solvers. For instance, in the regime of boundary integral equation methods,  \cite{Barnett} derived a scheme stemming from free-space scattering problem with specially designed auxiliary density, while \cite{Bruno} overcome the slow convergence of the quasi-periodic Green's function \cite{Lincton1} by designing a special window function in their formulation. Furthermore, by formulating the Lippmann-Schwinger equation of quasi-periodic scattering, \cite{Lechleiter1} uses the Fourier transformation to obtain a spectral Galerkin method. Moreover, in \cite{Chen4}, an adaptive finite element PML method is developed, while in \cite{Wang} an analysis on the transparent boundary condition of the scattering problem leads to the adaptive DtN method.

Having collected several aspects of the forward periodic scattering problem, we are ready to demonstrate the inverse problem:

\textit{Given all $u^{s}_{\alpha_{n}}, n\in B_{\alpha}$ with incident waves  $u^{inc}_{\alpha_{n}}$, which are measured on $\Gamma_{h}$ or $\Gamma_{-h}$,$h \geq 0$ (see Figure \ref{fig1}), reconstruct the boundary of the support of $\gamma(x)-1$. Where $\Gamma_{\pm h}=\lbrace (x_{1},x_{2})|x_{1}\in (-\frac{\Lambda}{2},\frac{\Lambda}{2}),x_{2}=\pm h\rbrace$.}


There has been numerous literature in the inverse problem community concerning the reconstruction of periodic structure, see \cite{Ammari2, Elschner1} for the uniqueness theorems concerning the inverse problems in two and three dimensions. Further, starting from \cite{Kirsch1} etc, iterative reconstruction methods\cite{Bao8,Elschner4,Hettlich1, Elschner5,Hsiao1}, as well as other two-step reconstruction methods\cite{Elschner7} are designed and studied. Especially, see \cite{Bao1} for a comprehensive survey on the reconstruction methods for periodic grating profiles.
As for the direct imaging methods on the periodic structures, 
there are a number of studies on diffractive periodic structures, such as factorization method \cite{Arens1}, the  linear sampling methods \cite{Yang1, Yang2}. In addition, for the reconstruction of periodically compactly-supported obstacles, there has recently been literature on the design of special indicator functional\cite{Nyugen2}, where a comparison of the above direct imaging methods in the case of periodic scattering is included.

 As is known, the RTM method has a competitive resolution of the bounded obstacle if one obtains the full-aperture data \cite{Chen1, Chen2, Chen3}. Extending the RTM method to the case of unbounded surface scattering, \cite{Li} is able to reconstruct simultaneously the locally-perturbed half-space and a compactly supported obstacle.  On the other hand, using limited aperture data, an analysis of half-space RTM in \cite{Chen1} indicates that one can obtain partially the boundary of the obstacle, whose resolution is given by the Kirchhoff coefficient, which is closely connected with the opening of the scatterer. 

The major contribution of this paper is an investigation of RTM method in inverse periodic scattering problem.  We analyze the RTM method with quasi-periodic data of measurements only from below or above the periodic array.  The resolution analysis is based on the Helmholtz-Kirchhoff equation, Rayleigh expansion of the scattering wave and point spread function for quasi-periodic scattering problem. Specially, we prove that the lower side RTM imaging functional is positive, the lower side and upper side RTM functional peak at the boundary of the scatterer.  We also demonstrate numerically that with partial data only from below the periodic array, one can find the clear image of the vertical part of the periodic array other than its lower part, and the imaging functional has the nice property of positivity. While with partial data only from above the periodic array, one can obtain the clear image of the horizontal part of the periodic array.

The structure of the paper is as follows:
To begin with, in Section \ref{sect2}, we investigate several preliminary tools for the resolution analysis of RTM functional. By an investigation of the Helmholtz-Kirchhoff equation for the quasi-periodic scattering problem, we observe a natural point spread function corresponding to the propagating modes of the quasi-periodic Green's function in terms of its spectral decomposition. 
In Section \ref{sect3}, where the RTM algorithm is proposed, the special structure of the point spread function leads to the form of cross-correlation between the incident waves of propagating modes and the back-propagation of the received data. 
Section \ref{sect4} presents the resolution analysis regarding the imaging power of RTM functional. We start with the lower RTM method which is constructed by measurements from below the obstacle, while the second part of the section gives a further analysis on the resolution of the upper RTM method. We extend our analysis to the sound-soft obstacle for the RTM methods in Section \ref{sect5}.
Finally in Section \ref{sect6}, the numerical experiments thus demonstrate the competitive imaging ability of our RTM functionals.
\section{Preliminaries}\label{sect2}
\subsection{Point spread function}
We begin by recalling the quasi-periodic Green's function \cite{Lincton1} in $\mathbb{R}^{2}$.
\begin{eqnarray}
G^{qp}_{\alpha}(x,y)=\frac{i}{4}\sum_{n\in \mathbb{Z}}H_{0}^{1}(k|x-y_{n}|)e^{in\Lambda\alpha}.\label{qpgreen}
\end{eqnarray}
Here, $y_{n}=(y_{n}^{1},y_{n}^{2})=(y_{1}+n\Lambda,y_{2})$ and $H^1_0(\cdot)$ is the zeroth-order Hankel function of the first kind. It is the solution to the equation
\begin{eqnarray*}
\Delta G^{qp}_{\alpha}(x,y) + k^{2}G^{qp}_{\alpha}(x,y)=-\sum_{n\in \mathbb{Z}}\delta_{y_{n}^{1}}(x_{1})\delta_{y_{n}^{2}}(x_{2})e^{in\Lambda\alpha}.
\end{eqnarray*}
The physical interpretation of \eqref{qpgreen} is the wave emitted by a periodic array of point sources each of which is equipped with a phase shift in the $x_{1}$ direction. 

Using the Poisson summation formula, we may obtain the spectral representation of the quasi-periodic Green's function at non-wood's anomalies, see \cite{Bao1}
\begin{equation}
G^{qp}_{\alpha}(x,y)=\frac{i}{2\Lambda}\sum_{n\in\mathbb{Z}}\frac{1}{\beta_{n}}e^{i\alpha_{n}(x_{1}-y_{1})+i\beta_{n}|x_{2}-y_{2}|}.\label{spgreen}
\end{equation}

Now we can give the Helmholtz-Kirchhoff's equation for quasi-periodic Green's function. To start with, we consider the case where the source points of the Green's function are above the upper measurement surface.
\begin{theorem}\label{thm:2.1}
For $y,z\in\Omega_{-h}^{+}=\{x=(x_1,x_2)|x\in\Omega,x_2> -h,h > 0\}$, assume that $\alpha$ is not a wood's anomaly, we have the following Helmholtz-Kirchhoff's equation
\begin{eqnarray}
\int_{\Gamma_{-h}}\frac{\partial\overline{G^{qp}_{\alpha}(x,y)}}{\partial x_{2}}G^{qp}_{\alpha}(x,z)-\frac{\partial G^{qp}_{\alpha}(x,z)}{\partial x_{2}}\overline{G^{qp}_{\alpha}(x,y)}ds(x)=F^{L}_{\alpha}(y,z).\label{HK}
\end{eqnarray}
Here, $\Gamma_{-h}=\{(x_1,x_2)|x_2 = -h, -\Lambda/2< x_1< \Lambda/2\}$ and
\begin{equation}
F^{L}_{\alpha}(y,z)=\frac{i}{2\Lambda}\sum_{n\in B_{\alpha}}\frac{1}{\beta_{n}}e^{i\alpha_{n}(y_{1}-z_{1})-i\beta_{n}(y_{2}-z_{2})}.\label{pspread}
\end{equation}
\end{theorem}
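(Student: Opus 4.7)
The plan is to substitute the spectral representation \eqref{spgreen} of $G^{qp}_\alpha$ into both factors on the left-hand side and exploit the $x_1$-orthogonality of Floquet modes on $\Gamma_{-h}$. Because $y,z\in\Omega_{-h}^{+}$ we have $y_2+h>0$ and $z_2+h>0$, so on the integration line the absolute values in \eqref{spgreen} collapse to $|x_2-y_2|=y_2+h$ and $|x_2-z_2|=z_2+h$. This means the integrand is smooth there and the evanescent part of the series decays like $e^{-|{\rm Im}\,\beta_n|(y_2+h)}$, which makes term-by-term differentiation and integration legitimate.

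The first step is to compute
\[
\frac{\partial G^{qp}_\alpha(x,z)}{\partial x_2}\bigg|_{x_2=-h}=\frac{1}{2\Lambda}\sum_{m\in\mathbb{Z}}e^{i\alpha_m(x_1-z_1)+i\beta_m(z_2+h)},
\]
and the analogous expression for $\partial_{x_2}\overline{G^{qp}_\alpha(x,y)}$, being careful to keep $\overline{\beta_n}$ in the conjugated series. Plugging these four pieces into the integrand produces a double Floquet series in $x_1$.

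The second step is to integrate in $x_1$ and use orthogonality
\[
\frac{1}{\Lambda}\int_{-\Lambda/2}^{\Lambda/2}e^{i(\alpha_m-\alpha_n)x_1}\,dx_1=\delta_{mn},
\]
which collapses the double sum to a single sum over $n\in\mathbb{Z}$. After collecting factors, every summand carries the common phase $e^{i\alpha_n(y_1-z_1)}\,e^{i\beta_n(z_2+h)-i\overline{\beta_n}(y_2+h)}$ multiplied by the coefficient $\tfrac{i}{4\Lambda}\bigl(\tfrac{1}{\beta_n}+\tfrac{1}{\overline{\beta_n}}\bigr)$, the two terms coming respectively from the two halves of the Helmholtz--Kirchhoff integrand.

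The finishing step is the branch-by-branch analysis of this coefficient. For $n\in B_\alpha$, $\beta_n$ is real and positive, so $\tfrac{1}{\beta_n}+\tfrac{1}{\overline{\beta_n}}=\tfrac{2}{\beta_n}$ and the phase simplifies to $e^{-i\beta_n(y_2-z_2)}$; summing these contributions gives exactly $F^L_\alpha(y,z)$ as defined in \eqref{pspread}. For $n\in U_\alpha$, $\beta_n=i\sqrt{\alpha_n^2-k^2}$ is purely imaginary with positive imaginary part, so $\overline{\beta_n}=-\beta_n$ and the coefficient vanishes identically; the evanescent modes drop out. The main obstacle is purely bookkeeping, namely tracking conjugates of $\beta_n$ in the right places so that the cancellation of evanescent terms is transparent; the exchange of summation and integration is justified by the uniform exponential decay noted above, and no analytical subtlety beyond this arises.
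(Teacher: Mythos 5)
Your proposal is correct and follows essentially the same route as the paper: substitute the spectral representation \eqref{spgreen} on $\Gamma_{-h}$, use the $x_1$-orthogonality of the Floquet modes to collapse the double sum, and observe that the evanescent contributions cancel while the propagating ones double. The only cosmetic difference is that you package the cancellation into the single coefficient $\tfrac{1}{\beta_n}+\tfrac{1}{\overline{\beta_n}}$ (which vanishes for $n\in U_\alpha$ since $\overline{\beta_n}=-\beta_n$ there), whereas the paper computes the two integrals separately and subtracts; your bookkeeping, including the justification of term-by-term integration via the exponential decay for $y_2+h>0$, $z_2+h>0$, is sound.
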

\begin{proof}
 To start with, we observe that, for $y_{2},z_{2}\geq-h$, $|-h-y_{2}|=(y_{2}+h)$, $|-h-z_{2}|=(h+z_{2})$. Further 
 \begin{eqnarray}
      \overline{i\beta_{n}}=
      \left\{
      \begin{aligned}
      &-i\beta_{n},\quad n\in B_{\alpha}\\
      &i\beta_{n},\quad n\in U_{\alpha}
      \end{aligned}.
 \right.
 \end{eqnarray}
 Thus the absolute value in the spectral form \eqref{spgreen} of the quasi-periodic Green's function can be exactly calculated.  With the help of the orthogonality 
\begin{eqnarray*}
    \int_{\Gamma_-h} e^{i\alpha_nx_1}e^{-i\alpha_mx_1}ds(x) =\left\{
      \begin{aligned}
      &0,\quad n\neq m\\
      &\Lambda,\quad n=m
      \end{aligned}
 \right. ,
\end{eqnarray*}
we obtain that
\begin{eqnarray}
&&\int_{\Gamma_{-h}}\frac{\partial\overline{G^{qp}_{\alpha}(x,y)}}{\partial x_{2}}G^{qp}_{\alpha}(x,z)ds(x)\nonumber\\
&&\quad=\frac{1}{4\Lambda^{2}}\int_{\Gamma_{-h}}\sum_{n\in\mathbb{Z}}i\overline{e^{i\alpha_{n}(x_{1}-y_{1})+i\beta_{n}(y_{2}+h)}}\cdot\sum_{n\in\mathbb{Z}}\frac{1}{\beta_{n}}e^{i\alpha_{n}(x_{1}-z_{1})+i\beta_{n}(z_{2}+h)}ds(x)\nonumber\\
&&\quad=\frac{1}{4\Lambda}(\sum_{n\in B_{\alpha}}\frac{i}{\beta_{n}}e^{i\alpha_{n}(y_{1}-z_{1})-i\beta_{n}(y_{2}-z_{2})}+\sum_{n\in U_{\alpha}}\frac{i}{\beta_{n}}e^{i\alpha_{n}(y_{1}-z_{1})+i\beta_{n}(2h+(y_{2}+z_{2}))}),\label{eva1}
\end{eqnarray}
and
\begin{eqnarray*}
&&\int_{\Gamma_{-h}}\frac{\partial{G^{qp}_{\alpha}(x,z)}}{\partial x_{2}}\overline{G^{qp}_{\alpha}(x,y)}ds(x)\\
&&\qquad=\frac{1}{4\Lambda^{2}}\int_{\Gamma_{-h}}\sum_{n\in\mathbb{Z}}-i{e^{i\alpha_{n}(x_{1}-z_{1})+i\beta_{n}(z_{2}+h)}}\cdot\sum_{n\in\mathbb{Z}}\overline{\frac{1}{\beta_{n}}e^{i\alpha_{n}(x_{1}-y_{1})+i\beta_{n}(h+y_{2})}}ds(x)\\
&&\qquad=\frac{1}{4\Lambda}(\sum_{n\in B_{\alpha}}\frac{-i}{\beta_{n}}e^{i\alpha_{n}(y_{1}-z_{1})-i\beta_{n}(y_{2}-z_{2})}+\sum_{n\in U_{\alpha}}\frac{i}{\beta_{n}}e^{i\alpha_{n}(y_{1}-z_{1})+i\beta_{n}(2h+(y_{2}+z_{2}))}).
\end{eqnarray*}
Thus doing the subtraction, we obtain \eqref{HK} and complete the proof.
\end{proof}
Inspecting the proof of Theorem \ref{thm:2.1}, we obtain that, 
\begin{corollary}\label{cor:2.1}
For $y,z\in\Omega^+_{-h}=\{x=(x_1,x_2)|x\in\Omega, x_2>-h, h> 0\}$,  assume that $\alpha$ is not a wood's anomaly, we have the following asymptotic result
\begin{eqnarray}
\int_{\Gamma_{-h}}\frac{\partial\overline{G^{qp}_{\alpha}(x,y)}}{\partial x_{2}}G^{qp}_{\alpha}(x,z)d s(x)=\frac{1}{2}F^{L}_{\alpha}(y,z)+R^L_{\alpha}(y,z;h),\label{HK1}
\end{eqnarray}
where $|R^L_{\alpha}(y,z;h)|=O(h^{-1}),|\nabla R^L_{\alpha}(y,z;h)|=O(h^{-1})$as $h\rightarrow\infty$.
\end{corollary}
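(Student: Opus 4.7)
The plan is to read off the result directly from the intermediate computation \eqref{eva1} already produced in the proof of Theorem \ref{thm:2.1}, rather than redoing the orthogonality argument. That computation already splits the surface integral into two sums, indexed by $B_{\alpha}$ and $U_{\alpha}$. The first sum is exactly a multiple of $F^{L}_{\alpha}(y,z)$, and the second sum is the natural candidate for the remainder $R^{L}_{\alpha}(y,z;h)$; the whole work then reduces to showing the remainder decays fast enough in $h$.

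More precisely, I would first match the $B_{\alpha}$-part of \eqref{eva1}, namely $\frac{1}{4\Lambda}\sum_{n\in B_{\alpha}}\frac{i}{\beta_{n}}e^{i\alpha_{n}(y_{1}-z_{1})-i\beta_{n}(y_{2}-z_{2})}$, against the definition \eqref{pspread} of $F^{L}_{\alpha}$ and observe it equals $\tfrac{1}{2}F^{L}_{\alpha}(y,z)$. Then I would set
\begin{equation*}
R^{L}_{\alpha}(y,z;h) := \frac{1}{4\Lambda}\sum_{n\in U_{\alpha}}\frac{i}{\beta_{n}}\,e^{i\alpha_{n}(y_{1}-z_{1})}\,e^{i\beta_{n}(2h+y_{2}+z_{2})}.
\end{equation*}
For $n\in U_{\alpha}$ we have $\beta_{n}=i\mu_{n}$ with $\mu_{n}=\sqrt{\alpha_{n}^{2}-k^{2}}>0$, so the exponential factor becomes $e^{-\mu_{n}(2h+y_{2}+z_{2})}$. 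Since $y,z\in\Omega^{+}_{-h}$, the quantity $2h+y_{2}+z_{2}$ is positive and, for fixed $y,z$, grows linearly in $h$.

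The decay estimate then follows by bounding
\begin{equation*}
|R^{L}_{\alpha}(y,z;h)| \le \frac{1}{4\Lambda}\sum_{n\in U_{\alpha}}\frac{1}{\mu_{n}}\,e^{-\mu_{n}(2h+y_{2}+z_{2})},
\end{equation*}
extracting the smallest eigenvalue $\mu_{0}=\min_{n\in U_{\alpha}}\mu_{n}>0$ to pull out a factor $e^{-\mu_{0} h}$, and noting that the residual series $\sum_{n\in U_{\alpha}} \mu_{n}^{-1}e^{-\mu_{n}(h+y_{2}+z_{2})}$ converges uniformly on compact sets of $(y,z)$ once $h$ is large enough that $h+y_{2}+z_{2}>0$. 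This gives exponential decay, which is much stronger than the claimed $O(h^{-1})$. For the gradient statement, differentiation of a term of the series brings down a factor of at most $|\alpha_{n}|+|\beta_{n}|$, and since $(|\alpha_{n}|+|\beta_{n}|)/\mu_{n}$ is bounded uniformly in $n\in U_{\alpha}$, the same exponential-decay argument applies term by term.

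The only delicate point, and the one I expect to be the main obstacle, is the uniform-in-$n$ control needed to differentiate the series inside the sum: one must justify interchanging $\nabla$ and $\sum$, which is immediate from the Weierstrass $M$-test once the $e^{-\mu_{n}h}$ bound is in hand, but it is the step to state carefully. Everything else is algebraic re-use of \eqref{eva1}.
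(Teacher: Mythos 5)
Your proposal is correct and follows the same skeleton as the paper's proof: both read the decomposition directly off \eqref{eva1}, identify the $B_{\alpha}$-part with $\tfrac{1}{2}F^{L}_{\alpha}(y,z)$, and take the $U_{\alpha}$-tail as the definition of $R^{L}_{\alpha}(y,z;h)$. The only real difference is how the tail is estimated. The paper compares the discrete sum over $U_{\alpha}$ with the continuous integral $\int_{k}^{\infty}\frac{1}{\sqrt{s^{2}-k^{2}}}e^{-\sqrt{s^{2}-k^{2}}(2h+y_{2}+z_{2})}\,ds$, and after the substitution $t=\sqrt{s^{2}-k^{2}}$ this produces the bound $\frac{1}{k(2h+y_{2}+z_{2})}$, i.e. exactly the advertised $O(h^{-1})$; the $h^{-1}$ rate is an artifact of the integral having mass arbitrarily close to the cutoff $s=k$, which the discrete sum does not. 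You instead exploit the spectral gap $\beta_{\Delta_{\alpha}}=\min_{n\in U_{\alpha}}\sqrt{\alpha_{n}^{2}-k^{2}}>0$ (guaranteed by the non-Wood's-anomaly assumption) to pull out $e^{-\beta_{\Delta_{\alpha}}h}$ and sum the remaining series by the $M$-test, which gives genuine exponential decay — strictly stronger than the stated $O(h^{-1})$, and hence sufficient. Your caveat that the estimate is uniform only for $(y,z)$ in a fixed compact set with $h$ large is the right one to make; the paper's own bound degenerates in the same way as $2h+y_{2}+z_{2}\to 0^{+}$, and in the application $y$ ranges over $D$ and $z$ over the fixed probing region. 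The gradient estimate via the uniform bound on $(|\alpha_{n}|+|\beta_{n}|)/\sqrt{\alpha_{n}^{2}-k^{2}}$ over $n\in U_{\alpha}$ is also sound, and matches the paper's one-line remark that the gradient case is ``similar.''
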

\begin{proof}
Denote by 
\begin{eqnarray*}
    R^L_{\alpha}(y,z;h)=\frac{1}{4\Lambda}\sum_{n\in U_{\alpha}}\frac{i}{\beta_{n}}e^{i\alpha_{n}(y_{1}-z_{1})-i\beta_{n}(-2h-(y_{2}+z_{2}))}, h>0\label{resL}
\end{eqnarray*} 
and 
\begin{eqnarray*}
    \beta_{\Delta_{\alpha}}=\min\{|i\beta_{n}|| n\in U_{\alpha}\},
\end{eqnarray*}
from \eqref{eva1}, 
we obtain that
\begin{eqnarray*}
&&4\Lambda|(R^L_{\alpha}(y,z;h))|=|\sum_{n\in U_{\alpha}}\frac{i}{\beta_{n}}e^{i\alpha_{n}(y_{1}-z_{1})-i\beta_{n}(-2h-(y_{2}+z_{2}))}|\\
&&\quad=|\sum_{n\in U_{\alpha}}\frac{1}{\sqrt{\alpha_{n}^{2}-k^{2}}}e^{-\sqrt{\alpha_{n}^{2}-k^{2}}(2h+(y_{2}+z_{2}))}e^{i\alpha_{n}(y_{1}-z_{1})}| \\
&&\quad\leq 2(\frac{1}{\beta_{\Delta_{\alpha}}}e^{-\beta_{\Delta_{\alpha}}(2h+z_{2}+y_{2})}+\int_{k}^{\infty}\frac{1}{\sqrt{s^{2}-k^{2}}}e^{-\sqrt{s^{2}-k^{2}}(2h+(y_{2}+z_{2}))}ds)\\
&&\quad\leq 2(\frac{1}{\beta_{\Delta_{\alpha}}}e^{-\beta_{\Delta_{\alpha}}(2h+z_{2}+y_{2})}+\int_{0}^{\infty}\frac{1}{t}e^{-t(2h+(y_{2}+z_{2}))}\frac{t}{\sqrt{t^{2}+k^{2}}}dt)\\
&&\quad\leq 2(\frac{1}{\beta_{\Delta_{\alpha}}}e^{-\beta_{\Delta_{\alpha}}(2h+z_{2}+y_{2})}+\int_{0}^{\infty}\frac{1}{k}e^{-t(2h+(y_{2}+z_{2}))}dt)\\
&&\quad\leq 2(\frac{1}{\beta_{\Delta_{\alpha}}}e^{-\beta_{\Delta_{\alpha}}(2h+z_{2}+y_{2})}+\frac{1}{k(2h+y_{2}+z_{2})}).
\end{eqnarray*}
Similarly, we may prove the estimate for $|\nabla R_{\alpha}^{L}(y,z)|$, and
thus completes the proof of the lemma.
\end{proof}

Thus, it's reasonable to hope that the cross correlation between the quasi-periodic equation and the conjugate of its $x_{2}$ derivative would produce a good approximation of $F^{L}_{\alpha}(y,z)$, whose imaginary part of has the form of a point spread function that peak at $y=z$, and decay as $y$ leaves $z$. In Section \ref{sect4}, we shall see that this quasi-periodic point spread function reflect the imaging ability of the lower RTM functional $\mathcal{I}_{L}(z)$

Following similar proof as Theorem \ref{thm:2.1}, we obtain that the cross-correlation between quasi-periodic Green's function for $y,z\in \Omega_{h}^{-}$.
\begin{theorem}\label{thm:2.2}
For $y,z\in\Omega_{h}^{-}=\{x=(x_1,x_2)|x\in\Omega,x_2 < h,h > 0\}$, assume that $\alpha$ is not a wood's anomaly, we have the following Helmholtz-Kirchhoff's equation
\begin{eqnarray*}
\int_{\Gamma_{h}}\frac{\partial\overline{G^{qp}_{\alpha}(x,y)}}{\partial x_{2}}G^{qp}_{\alpha}(x,z)-\frac{\partial G^{qp}_{\alpha}(x,z)}{\partial x_{2}}\overline{G^{qp}_{\alpha}(x,y)}ds(x)=-F^{U}_{\alpha}(y,z).
\end{eqnarray*}
Here, $\Gamma_{h}=\{(x_1,x_2)|x_2 = h, -\Lambda/2< x_1< \Lambda/2\}$ and
\begin{eqnarray*}
F^{U}_{\alpha}(y,z)=\frac{i}{2\Lambda}\sum_{n\in B_{\alpha}}\frac{1}{\beta_{n}}e^{i\alpha_{n}(y_{1}-z_{1})+i\beta_{n}(y_{2}-z_{2})}.\label{pspread1}
\end{eqnarray*}
\end{theorem}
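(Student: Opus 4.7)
The plan is to mirror the proof of Theorem \ref{thm:2.1} verbatim, with the measurement line relocated from $x_2 = -h$ to $x_2 = h$ and the source points placed below rather than above it. The algebraic skeleton is identical: start from the spectral representation \eqref{spgreen}, drop the absolute values using the hypothesis $y_2, z_2 < h$, differentiate in $x_2$, exploit the $L^2(\Gamma_h)$-orthogonality $\int_{\Gamma_h} e^{i(\alpha_n - \alpha_m)x_1}\,ds = \Lambda\,\delta_{nm}$, and split the resulting single series according to whether $n \in B_\alpha$ or $n \in U_\alpha$, using $\overline{i\beta_n} = -i\beta_n$ on $B_\alpha$ and $\overline{i\beta_n} = i\beta_n$ on $U_\alpha$.

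The only non-cosmetic change is the sign of the $x_2$-derivative on the new measurement line. Since now $x_2 = h \geq y_2, z_2$, the simplification $|x_2 - y_2| = x_2 - y_2$ gives $\partial_{x_2}e^{i\beta_n|x_2-y_2|}\big|_{x_2=h} = +i\beta_n\, e^{i\beta_n(h-y_2)}$, opposite in sign to the factor $-i\beta_n\, e^{i\beta_n(y_2+h)}$ that appeared in Theorem \ref{thm:2.1} at $x_2=-h$. Threading this single sign flip through the two halves of the integrand, the roles of the propagating and evanescent sums swap compared with \eqref{eva1}: the propagating contributions from the two integrals now add (producing the answer) and the evanescent contributions cancel (producing nothing).

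For the propagating part, combining the phase $e^{i\beta_n(h-z_2)}$ from $G^{qp}_\alpha(x,z)$ with the phase $e^{\overline{i\beta_n}(h-y_2)} = e^{-i\beta_n(h-y_2)}$ from $\overline{G^{qp}_\alpha(x,y)}$ yields exponent $e^{i\beta_n(y_2-z_2)}$, which matches \eqref{pspread1} term by term. Accounting for the accumulated sign from the derivative flip and the prefactor $i/(2\Lambda)$ produces $-F^U_\alpha(y,z)$ on the right-hand side, as claimed.

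The only conceivable obstacle is signed bookkeeping across the four sub-sums (two integrals, each split into $B_\alpha$ and $U_\alpha$), and this is already carried out in detail in Theorem \ref{thm:2.1}; no new analytic ingredient is required. A corresponding analogue of Corollary \ref{cor:2.1} (with a remainder $R^U_\alpha(y,z;h)$ decaying like $h^{-1}$) will follow from the surviving evanescent half-sum by the same geometric-series/integral comparison argument used there.
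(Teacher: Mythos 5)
Your proposal is correct and is exactly the route the paper intends: the paper gives no separate proof of Theorem \ref{thm:2.2}, stating only that it follows the proof of Theorem \ref{thm:2.1}, and your single sign flip in $\partial_{x_2}e^{i\beta_n|x_2-y_2|}$ at $x_2=h$ together with the same orthogonality and $B_\alpha$/$U_\alpha$ splitting does yield $-F^U_\alpha(y,z)$ with the evanescent sums cancelling. (One small wording quibble: the roles of the propagating and evanescent sums do not actually ``swap'' relative to \eqref{eva1} --- in both theorems the propagating parts enter the two integrals with opposite signs so the subtraction doubles them, while the evanescent parts enter identically and cancel; your subsequent sentence states this correctly.)
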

\begin{corollary}
For $y,z\in\Omega^-_{h}=\{x=(x_1,x_2)|x\in\Omega, x_2<h, h>0\}$, assume that $\alpha$ is not a wood's anomaly, we have the following asymptotic result
\begin{equation}
\int_{\Gamma_{h}}\frac{\partial\overline{G^{qp}_{\alpha}(x,y)}}{\partial x_{2}}G^{qp}_{\alpha}(x,z)d s(x)=-\frac{1}{2}F^{U}_{\alpha}(y,z)+R^U_{\alpha}(y,z;h),\label{HK2}
\end{equation}
where $|R^U_{\alpha}(y,z;h)|=O(h^{-1}),|\nabla R^U_{\alpha}(y,z;h)|=O(h^{-1})$as $h\rightarrow\infty$. 
\end{corollary}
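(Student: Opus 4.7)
The plan is to mirror the argument of Corollary~\ref{cor:2.1}, substituting $\Gamma_h$ for $\Gamma_{-h}$ and exploiting the sign change in the exponent $i\beta_n|x_2-y_2|$ that arises because the sources $y,z$ now sit below the measurement line rather than above it. Starting from Theorem~\ref{thm:2.2}, the Helmholtz--Kirchhoff identity already exhibits a difference of two cross-correlation integrals on $\Gamma_h$ and equates the difference to $-F^U_\alpha(y,z)$; the task is only to isolate one of the two integrals and to track the evanescent remainder it carries.

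First I would plug the spectral representation \eqref{spgreen} into the integral $\int_{\Gamma_h}\partial_{x_2}\overline{G^{qp}_\alpha(x,y)}\,G^{qp}_\alpha(x,z)\,ds(x)$. Because $y_2,z_2\leq h$, one has $|h-y_2|=h-y_2$ and $|h-z_2|=h-z_2$, so both series become genuine exponentials in $h,y_2,z_2$, and each application of $\partial_{x_2}$ at $x_2=h$ produces a factor $i\beta_n/\beta_n=i$. The orthogonality $\int_{-\Lambda/2}^{\Lambda/2}e^{i(\alpha_n-\alpha_m)x_1}dx_1=\Lambda\delta_{nm}$ then collapses the double sum into a single sum over $n\in\mathbb{Z}$, which I would split into a propagating part ($n\in B_\alpha$) and an evanescent part ($n\in U_\alpha$) using $\overline{i\beta_n}=-i\beta_n$ or $i\beta_n$ respectively. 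On $B_\alpha$ the $h$-dependence cancels and the propagating contribution collapses to precisely $-\tfrac12 F^U_\alpha(y,z)$; the sign flip relative to \eqref{HK1} reflects the orientation of $\Gamma_h$ versus $\Gamma_{-h}$. On $U_\alpha$ the combined exponent becomes $-\sqrt{\alpha_n^2-k^2}\,(2h-(y_2+z_2))$, and these terms define $R^U_\alpha(y,z;h)$.

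The remainder estimate is the direct analog of the bound already carried out for $R^L_\alpha$. Setting $\beta_{\Delta_\alpha}=\min\{|i\beta_n|:n\in U_\alpha\}$, I would majorize $|R^U_\alpha(y,z;h)|$ by $\frac{1}{\beta_{\Delta_\alpha}}e^{-\beta_{\Delta_\alpha}(2h-y_2-z_2)}+\int_k^\infty\frac{1}{\sqrt{s^2-k^2}}e^{-\sqrt{s^2-k^2}(2h-y_2-z_2)}\,ds$. After the substitution $t=\sqrt{s^2-k^2}$, the remaining integral is bounded by $\frac{1}{k(2h-y_2-z_2)}$. For $y_2,z_2$ confined to a bounded set with $y_2,z_2\leq h$, the quantity $2h-y_2-z_2$ grows linearly in $h$, yielding $|R^U_\alpha(y,z;h)|=O(h^{-1})$. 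The gradient bound follows by differentiating term by term in $y$; the extra factor $|\alpha_n|$ produced is comfortably absorbed by the exponential decay when the same integral comparison is applied.

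The main obstacle will be bookkeeping rather than analysis: one must carefully verify that the overall sign of the propagating contribution is $-\tfrac12$ rather than $+\tfrac12$, and that the evanescent exponent uses $2h-(y_2+z_2)$ instead of $2h+(y_2+z_2)$. Both flips follow mechanically from replacing $|{-}h-y_2|=y_2+h$ in Corollary~\ref{cor:2.1} by $|h-y_2|=h-y_2$ here, so no new analytical ingredient should be needed beyond what already appears in the lower-side corollary.
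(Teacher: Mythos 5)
Your proposal is correct and follows essentially the same route as the paper: the paper omits the proof of this corollary precisely because it is the mirror image of Corollary \ref{cor:2.1}, obtained by replacing $|-h-y_2|=y_2+h$ with $|h-y_2|=h-y_2$, which flips the sign of the propagating contribution to $-\tfrac12 F^U_\alpha$ and turns the evanescent exponent into $-\sqrt{\alpha_n^2-k^2}\,(2h-y_2-z_2)$, after which the same sum-to-integral comparison gives the $O(h^{-1})$ bounds. Your sign and exponent bookkeeping checks out against the computation in \eqref{eva1}.
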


The difference of $+$,$-$ in \eqref{pspread1} and \eqref{pspread}, leads to a sharper point spread function for RTM functionals. Namely, if we consider the following representations, 
\begin{eqnarray}
&&F^{1}_{\alpha}(y,z)=\frac{i}{2\Lambda}\sum_{n\in B_{\alpha}}\frac{1}{\beta_{n}}e^{i\alpha_{n}(y_{1}-z_{1})}\cos{\beta_{n}(y_{2}-z_{2})},\label{Falpha1}\\
&&F^{2}_{\alpha}(y,z)=\frac{i}{2\Lambda}\sum_{n\in B_{\alpha}}\frac{1}{\beta_{n}}e^{i\alpha_{n}(y_{1}-z_{1})}\sin{\beta_{n}(y_{2}-z_{2})},\label{Falpha2}
\end{eqnarray}
we have 
\begin{eqnarray*}
F^{U}_{\alpha}(y,z)=F^{1}_{\alpha}(y,z)+iF^{2}_{\alpha}(y,z),\quad F^{L}_{\alpha}(y,z)=F^{1}_{\alpha}(y,z)-iF^{2}_{\alpha}(y,z).
\end{eqnarray*}
Now, using the spectral expansion of the quasi-periodic Green's function, we obtain the next theorem.
\begin{theorem}\label{thm:2.3}
Assume that $|\alpha_{n}|\neq k, n\in\mathbb{Z}$,$|y-z|\neq m\Lambda,m\in\mathbb{Z}$, we have the following result
\begin{equation}
F^{1}_{\alpha}(y,z)=\sum_{n\in\mathbb{Z}}\frac{i}{4}J^{1}_{0}(k|y-z_{n}|)e^{in\Lambda\alpha},\label{Falpha12}
\end{equation}
where $J^{1}_{0}(k|x-y|)$ is the Bessel function of the first kind.
\end{theorem}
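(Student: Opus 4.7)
The plan is to mirror the derivation of the spectral form \eqref{spgreen} of $G^{qp}_{\alpha}$: substitute a Sommerfeld-type Fourier integral representation of $J_{0}$ into the series on the right of \eqref{Falpha12}, interchange sum and integral, and then apply the Poisson summation formula. The only new ingredient compared with the derivation of \eqref{spgreen} is that the integral representation of $J_{0}$ is supported only on the propagating band $|\xi|<k$, which is precisely what produces the restricted sum over $B_{\alpha}$ appearing in $F^{1}_{\alpha}$.

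First I would derive the relevant integral representation of $J_{0}$. Starting from the standard Sommerfeld integral
\begin{equation*}
H^{1}_{0}(k|x-y|)=\frac{1}{\pi}\int_{-\infty}^{\infty}\frac{1}{\beta(\xi)}e^{i\xi(x_{1}-y_{1})+i\beta(\xi)|x_{2}-y_{2}|}d\xi,
\end{equation*}
with $\beta(\xi)=\sqrt{k^{2}-\xi^{2}}$ for $|\xi|\le k$ and $\beta(\xi)=i\sqrt{\xi^{2}-k^{2}}$ for $|\xi|>k$ (the very representation that yields \eqref{spgreen} after Poisson summation), I would take real parts and check that the contribution from the evanescent band $|\xi|>k$ is odd in $\xi$ and therefore drops out. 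What remains, after using parity to replace $|x_{2}-y_{2}|$ by $x_{2}-y_{2}$ inside the cosine and to upgrade $\cos(\xi(x_{1}-y_{1}))$ to $e^{i\xi(x_{1}-y_{1})}$, is
\begin{equation*}
J_{0}(k|x-y|)=\frac{1}{\pi}\int_{-k}^{k}\frac{e^{i\xi(x_{1}-y_{1})}\cos(\beta(\xi)(x_{2}-y_{2}))}{\beta(\xi)}d\xi.
\end{equation*}

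Plugging this into the right-hand side of \eqref{Falpha12} and exchanging the series in $n$ with the integral in $\xi$ gives
\begin{equation*}
\sum_{n\in\mathbb{Z}}\frac{i}{4}J^{1}_{0}(k|y-z_{n}|)e^{in\Lambda\alpha}=\frac{i}{4\pi}\int_{-k}^{k}\frac{e^{i\xi(y_{1}-z_{1})}\cos(\beta(\xi)(y_{2}-z_{2}))}{\beta(\xi)}\sum_{n\in\mathbb{Z}}e^{in\Lambda(\alpha-\xi)}d\xi.
\end{equation*}
Invoking the Poisson identity $\sum_{n\in\mathbb{Z}}e^{in\Lambda(\alpha-\xi)}=\frac{2\pi}{\Lambda}\sum_{m\in\mathbb{Z}}\delta(\xi-\alpha_{m})$ and noting that only the modes with $\alpha_{m}\in(-k,k)$, i.e.\ $m\in B_{\alpha}$, lie inside the integration domain (the non--Wood's-anomaly assumption guarantees $\alpha_{m}\neq\pm k$ for every $m$, so no endpoint contributions arise), the delta functions sift out the desired terms and produce exactly
\begin{equation*}
\frac{i}{2\Lambda}\sum_{m\in B_{\alpha}}\frac{1}{\beta_{m}}e^{i\alpha_{m}(y_{1}-z_{1})}\cos(\beta_{m}(y_{2}-z_{2}))=F^{1}_{\alpha}(y,z),
\end{equation*}
which is \eqref{Falpha12}.

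The main obstacle is rigorously justifying the interchange of sum and integral, because the series $\sum_{n}J_{0}(k|y-z_{n}|)e^{in\Lambda\alpha}$ is only conditionally convergent: $|J_{0}(k|y-z_{n}|)|=O(|n|^{-1/2})$ as $|n|\to\infty$, much like the classical conditional convergence of the spatial representation \eqref{qpgreen} of $G^{qp}_{\alpha}$. The hypothesis $|y-z|\neq m\Lambda$ excludes the discrete set of positions at which one of the shifted source points would coincide with $y$, and the non--Wood's-anomaly hypothesis keeps the sifting deltas away from the boundary of the integration band. A clean way to bypass the convergence issue is the standard regularisation: replace $k$ by $k+i\varepsilon$ with $\varepsilon>0$, so that both the spatial series on the right of \eqref{Falpha12} and the Sommerfeld integral representation of $J_{0}$ converge absolutely; carry out the Poisson-summation manipulations above; and then pass to the limit $\varepsilon\downarrow 0$, exactly as in the derivation of the spectral expansion of the quasi-periodic Green's function.
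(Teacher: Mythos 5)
Your route is genuinely different from the paper's. The paper never applies Poisson summation to the Bessel series directly: it forms the combination $G^{qp}_{\alpha}(y,z)-\overline{G^{qp}_{\alpha}(z,y)}$ and evaluates it twice, once with the spatial representation \eqref{qpgreen}, where $H^{1}_{0}+\overline{H^{1}_{0}}=2J_{0}$ produces the Bessel series, and once with the spectral representation \eqref{spgreen}, where the evanescent modes (for which $\overline{i\beta_{n}}=i\beta_{n}$) cancel exactly and only the $B_{\alpha}$-sum with $\cos(\beta_{n}(y_{2}-z_{2}))$ survives. Everything is thus reduced to the two already-established representations of $G^{qp}_{\alpha}$, and no new summation argument is needed. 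Your formal computation --- the band-limited integral representation of $J_{0}$, the Poisson sifting onto $\xi=\alpha_{m}$, and the observation that only $m\in B_{\alpha}$ lies inside $(-k,k)$ --- is correct and arrives at the right identity; as a derivation it has the virtue of explaining \emph{why} only propagating modes appear.

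There is, however, a genuine gap in your justification of the interchange of sum and integral: the $k\mapsto k+i\varepsilon$ regularisation does not work for the Bessel series. It works for $\sum_{n}H^{1}_{0}$ because $H^{1}_{0}(w)\sim e^{iw}/\sqrt{w}$ decays when $\operatorname{Im}w>0$, but $J_{0}=\tfrac12(H^{1}_{0}+H^{2}_{0})$ contains the second-kind Hankel function, which behaves like $e^{-iw}/\sqrt{w}$ and therefore grows exponentially in the upper half-plane; consequently $J_{0}((k+i\varepsilon)|y-z_{n}|)$ grows like $e^{\varepsilon\Lambda|n|}$ and the regularised series diverges. To repair the argument you would need either a different summation device (Abel or Ces\`aro summation in $n$, or a partial-summation estimate exploiting the oscillation of $e^{in\Lambda\alpha}$ away from Wood's anomalies to control the conditionally convergent tail), or --- most economically --- to split $J_{0}$ back into $\tfrac12(H^{1}_{0}+\overline{H^{1}_{0}})$ and invoke the known convergence of the two quasi-periodic Green's function series separately, which is precisely what the paper's proof does.
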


\begin{proof}
Being aware of that 
\begin{eqnarray*}
G^{qp}_\alpha(z,y)&=&\sum_{n\in\mathbb{Z}}\frac{i}{4}H^{1}_{0}(k|z-y_{n}|)e^{in\alpha}
=\sum_{n\in\mathbb{Z}}\frac{i}{4}H^{1}_{0}(k|y-z_{-n}|)e^{in\Lambda\alpha}\\
&=&\sum_{n\in\mathbb{Z}}\frac{i}{4}H^{1}_{0}(k|y-z_{n}|)e^{-in\Lambda\alpha},
\end{eqnarray*}
Since $\alpha_n\neq k$ and $|y-z|\neq n\Lambda, \forall n\in\mathbb{Z}$, the series is convergent. Then we have
\begin{eqnarray*}
    G^{qp}_{\alpha}(y,z)-\overline{G^{qp}_{\alpha}(z,y)}&=&\sum_{n\in\mathbb{Z}}\frac{i}{4}H^{1}_{0}(k|y-z_{n}|)e^{in\alpha}+\sum_{n\in\mathbb{Z}}\frac{i}{4}\overline{H^{1}_{0}(k|z-y_{n}|)}e^{in\Lambda\alpha}\\
    &=&\frac{i}{4}\sum_{n\in\mathbb{Z}}(H^{1}_{0}(k|y-z_{n}|)+\overline{H^{1}_{0}(k|y-z_{n}|)})e^{in\Lambda\alpha}\\
    &=&\sum_{n\in\mathbb{Z}}\frac{i}{2}J_{0}(k|x-y_{n}|)e^{in\Lambda\alpha}.
\end{eqnarray*}
On the other hand
\begin{eqnarray*}
     G^{qp}_{\alpha}(y,z)-\overline{G^{qp}_{\alpha}(z,y)}&=&\frac{i}{2\Lambda}\sum_{n\in \mathbb{Z}}\frac{1}{\beta_{n}}e^{i\alpha_{n}(y_{1}-z_{1}
     +i\beta_{n}|y_{2}-z_{2}|}
     +\frac{1}{\bar{\beta_{n}}}e^{i\alpha_{n}(y_{1}-z_{1})-i\overline{\beta_{n}}|y_{2}-z_{2}|}\\
     &=&\frac{i}{\Lambda}\sum_{n\in B_{\alpha}}\frac{1}{\beta_{n}}e^{i\alpha_{n}(y_{1}-z_{1})}\cos(\beta_{n}(y_{2}-z_{2}))\\
     &=&2F_{\alpha}^{1}(y,z)
\end{eqnarray*}
 which is exactly what we have asserted.
\end{proof}
We remark that right hand side of \eqref{Falpha12} also appeared in \cite{Nyugen2} as the point spread function. \eqref{Falpha12} resembles that $F_{\alpha}^{1}$ has a similar imaging power as the Bessel's function of the first kind. We further include Figure \ref{figp} of the point spread functions $F^{L}_{\alpha}$ and $F^{1}_{\alpha}$. From Figure \ref{figp:a} and \ref{figp:b} which correspond to $Im(F^{L}_{\alpha})$, and Figure \ref{figp:c} and \ref{figp:d} which correspond to $Im(F_{\alpha}^1)$, it's clear that the point spread function $F_{\alpha}^{1}$ has the similar behavior with $F_{\alpha}$.
\begin{figure}
\centering
\subfloat[\label{figp:a}]{\includegraphics[width=6cm]{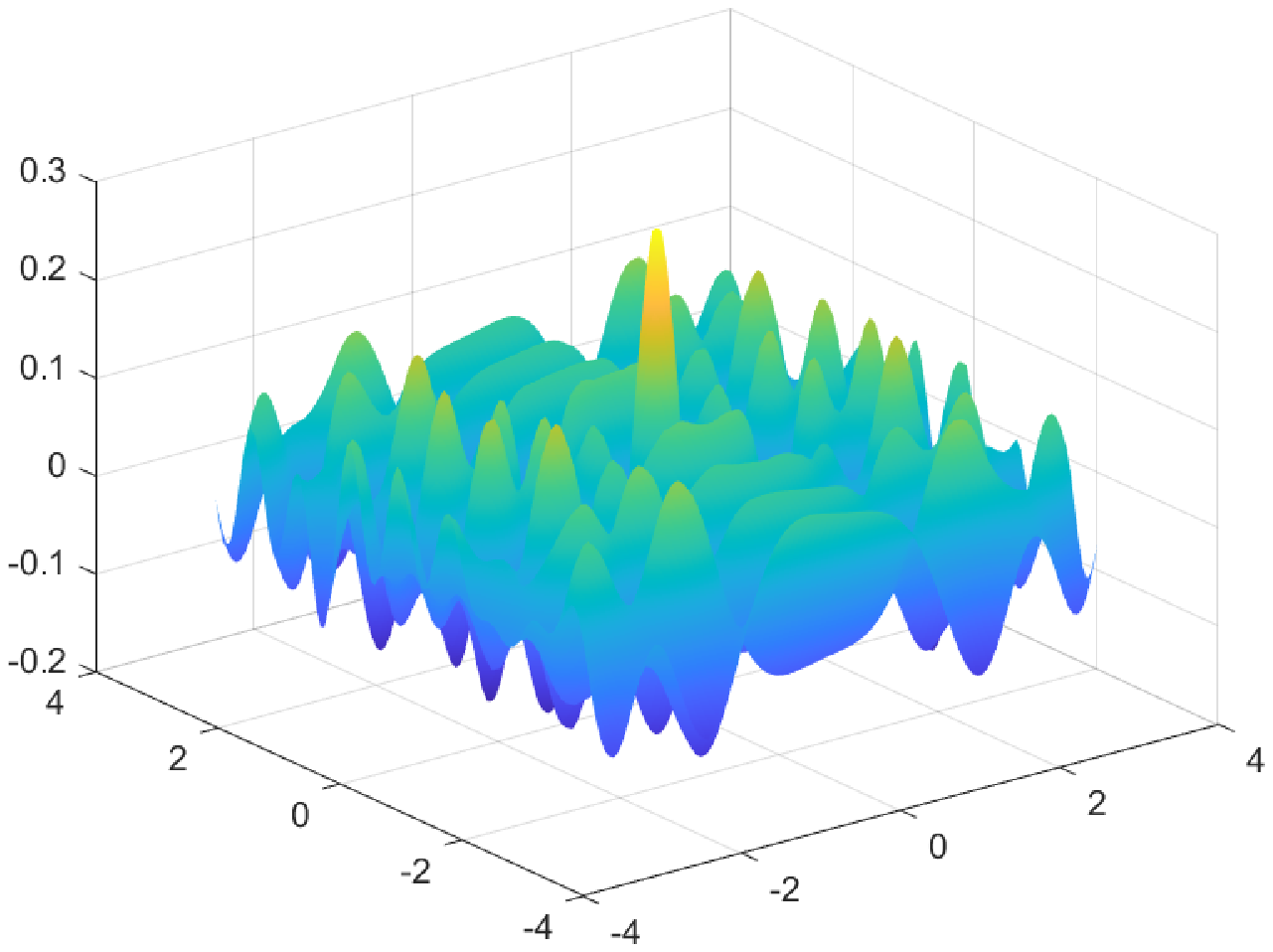}}\quad
\subfloat[\label{figp:b}]{\includegraphics[width=6cm]{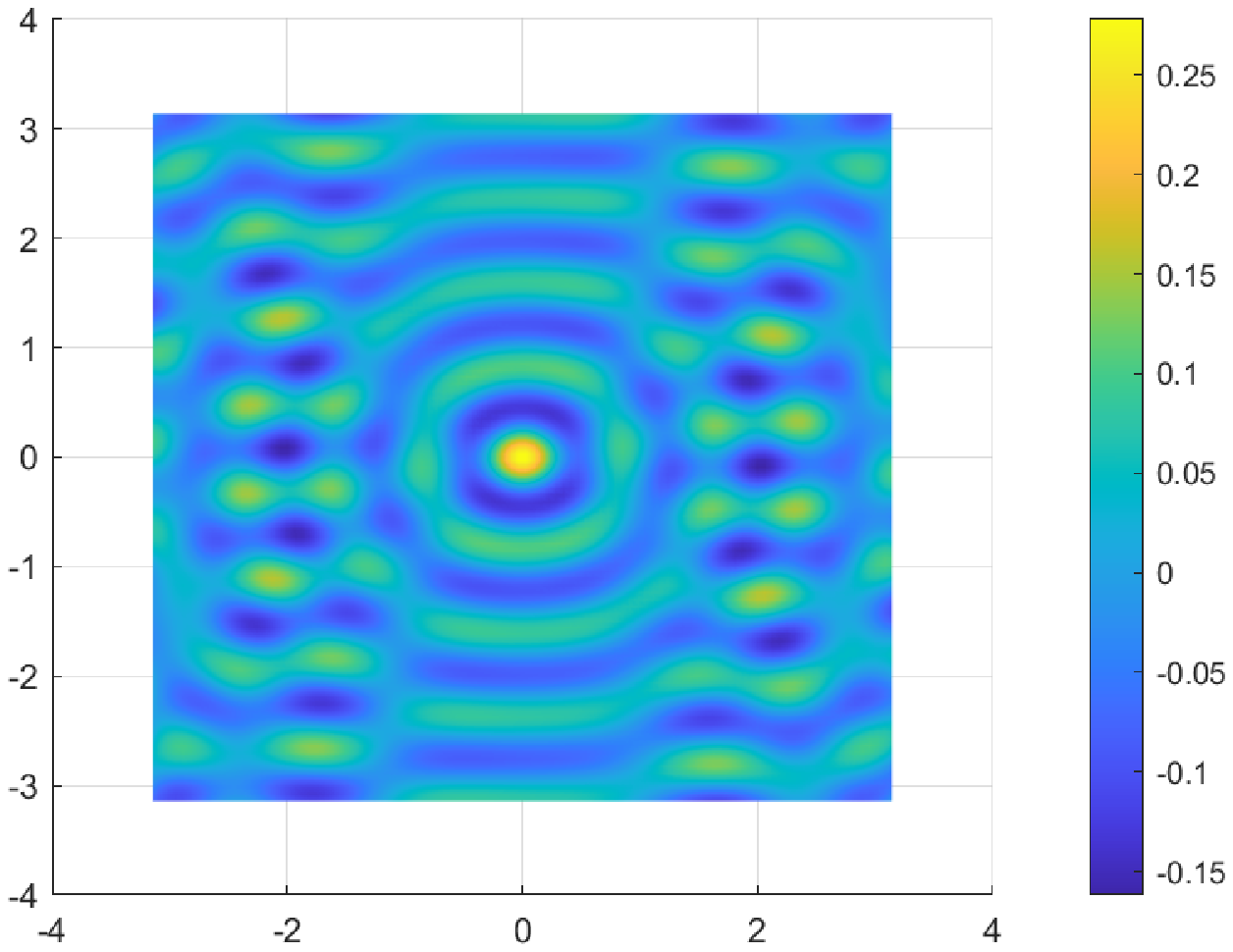}}\quad
\centering
\subfloat[\label{figp:c}]{\includegraphics[width=6cm]{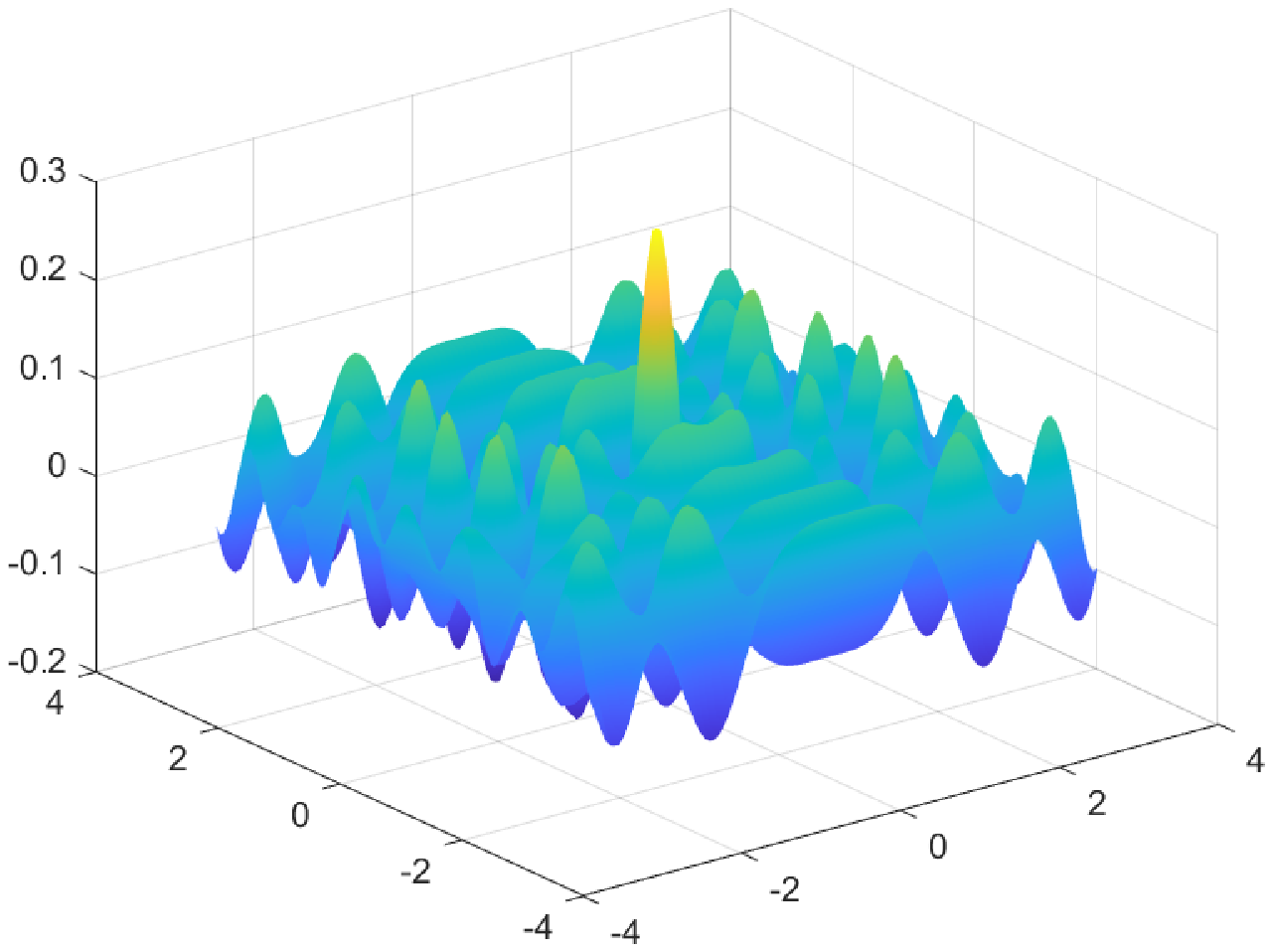}}\quad
\subfloat[\label{figp:d}]{\includegraphics[width=6cm]{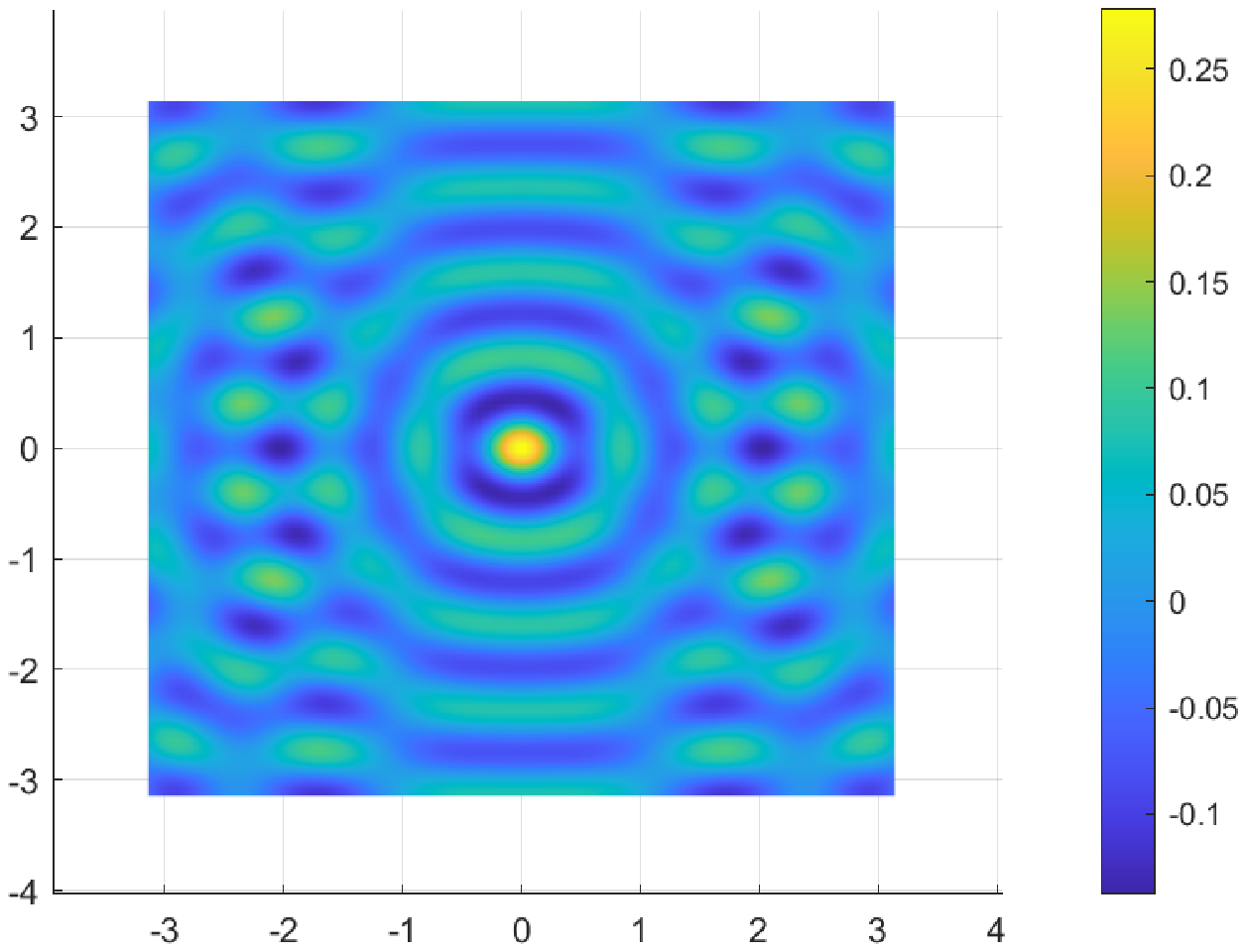}}
\caption{The imaginary part of point-spread function $F^{L}_{\alpha}$ and $F_{\alpha}^{1}$, for $k=5.2\pi,x\in\Omega,\Lambda=2\pi$.}
\label{figp}
\end{figure}
This similarity is reflected both theoretically and numerically in Section \ref{sect4} and \ref{sect5}.

The second basic ingredient is the following Lippmann-Schwinger equation
\begin{theorem}\label{thm:2.4}
Denote by $D$ the compact support of $\gamma(x)-1$ in the periodic cell $\Omega$, the $\alpha$-quasi-periodic solution to the Helmholtz equation satisfies the following quasi-periodic Lippmann-Schwinger equation. 
\begin{eqnarray}
u_{\alpha}^{s}(x)=\int_{D}k^{2}(\gamma(y)-1)G^{qp}_{\alpha}(x,y)u_{\alpha}(y)dy.\label{qpLS}
\end{eqnarray} 
for all $x\in\Omega\setminus\bar{D}$.
\begin{proof}
With \eqref{Helmholtz}, \eqref{total}, we have
\begin{eqnarray*}
\Delta u^{s}_{\alpha}(y)+k^{2}u^{s}_{\alpha}(y) = k^2(1-\gamma(y))u_{\alpha}(y).
\end{eqnarray*}
For any $x\in\Omega$ outside of $D$, taking a small ball $B_{\rho}(x)$ of radius $\rho$ that is contained in $\Omega$. Further, bound the outside of $D$ with a rectangular section $\partial\Omega_{0}$, whose upper and lower bounds are given by $\Gamma_{\underline{+}h}$ with $h\geq H$. The region include in the section is denoted by $\Omega_{0}$. Thus 
multiplying both sides of the equations by $G^{qp}_{\alpha}(x,y)$, and integrate over the $\Omega_{0}\setminus B_{\rho}(x)$, we have
\begin{eqnarray*}
\int_{\Omega_{0}\setminus B_{\rho}(x)}G^{qp}_{\alpha}(x,y)(\Delta u^{s}_{\alpha}(y)+k^{2}u^{s}_{\alpha}(y))dy=\int_{D}k^2(1-\gamma(y))G^{qp}_{\alpha}(x,y)u_{\alpha}(y)dy.
\end{eqnarray*}
Using the Green's second formula, we obtain that 
\begin{eqnarray}
LHS=\int_{\partial(\Omega_{0}\setminus B_{\rho}(x))}G^{qp}_{\alpha}(x,y)\frac{\partial u^{s}_{\alpha}(y)}{\partial\nu(y)}-u^{s}_{\alpha}(y)\frac{\partial G^{qp}_{\alpha}(x,y)}{\partial\nu(y)}ds(y).\label{int}
\end{eqnarray}
We denote by $RHS=I_{1}+I_{2}$ the integration on the two boundaries in \eqref{int} respectively, thus on $\partial \Omega_{0}$, 
\begin{eqnarray}
I_{1}=(\int_{\Gamma_{\underline{+}h}}+\int_{\partial\Omega_{0}^{L}}-\int_{\partial\Omega_{0}^{R}})G^{qp}_{\alpha}(x,y)\frac{\partial u_{\alpha}^{s}(y)}{\partial\nu(y)}-u^{s}_{\alpha}(y)\frac{\partial G^{qp}_{\alpha}(x,y)}{\partial\nu(y)}ds(y).\label{intI1}
\end{eqnarray}
Observing that $G^{qp}_{\alpha}(x,y)$ is $-\alpha$-quasi-periodic in $y_{1}$, we have that the difference of integration along the left and right boundaries in \eqref{intI1} vanishes, and we are left with
\begin{eqnarray*}
I_{1}=\int_{\Gamma_{\underline{+}h}}G^{qp}_{\alpha}(x,y)\frac{\partial u_{\alpha}^{s}(y)}{\partial y_{2}}-u^{s}_{\alpha}(y)\frac{\partial G^{qp}_{\alpha}(x,y)}{\partial y_{2}}ds(y).
\end{eqnarray*}
Using the Spectral representation of $G^{qp}_{\alpha}(x,y)$ and the Rayleigh expansion of $u_{\alpha}^{s}$ on  $\Gamma_{\underline{+}h}$, since that $x\in \Omega_{0},x_{2}\in(-h,h),h\geq H$, we have
\begin{eqnarray*}
    &&\int_{\Gamma_{h}}G^{qp}_{\alpha}(x,y)\frac{\partial u_{\alpha}^{s}(y)}{\partial y_{2}}-u^{s}_{\alpha}(y)\frac{\partial G^{qp}_{\alpha}(x,y)}{\partial y_{2}}ds(y)\\
    &&=\frac{i}{2\Lambda}\int_{\Gamma_{h}}(\sum_{n\in\mathbb{Z}}\frac{1}{\beta_{n}}e^{i\alpha_{n}(x_{1}-y_{1})+i\beta_{n}(h-x_{2})}\cdot\sum_{m\in\mathbb{Z}}i\beta_{m}u_{m,\alpha}^{s+}e^{i\alpha_{m}y_{1}+i\beta_{m}h}\\
    &&-\sum_{n\in\mathbb{Z}}ie^{i\alpha_{n}(x_{1}-y_{1})+i\beta_{n}(h-x_{2})}\cdot\sum_{m\in\mathbb{Z}}u_{m,\alpha}^{s+}e^{i\alpha_{m}y_{1}+i\beta_{m}h})ds(y)\\
    &&=0.
\end{eqnarray*}
and
\begin{eqnarray*}
    &&\int_{\Gamma_{-h}}G^{qp}_{\alpha}(x,y)\frac{\partial u_{\alpha}^{s}(y)}{\partial y_{2}}-u^{s}_{\alpha}(y)\frac{\partial G^{qp}_{\alpha}(x,y)}{\partial y_{2}}ds(y)\\
    &&=\frac{i}{2\Lambda}\int_{\Gamma_{-h}}(\sum_{n\in\mathbb{Z}}\frac{1}{\beta_{n}}e^{i\alpha_{n}(x_{1}-y_{1})+i\beta_{n}(h+x_{2})}\cdot\sum_{m\in\mathbb{Z}}-i\beta_{m}u_{m,\alpha}^{s+}e^{i\alpha_{m}y_{1}-i\beta_{m}h}\\
    &&-\sum_{n\in\mathbb{Z}}-ie^{i\alpha_{n}(x_{1}-y_{1})+i\beta_{n}(x_{2}+h)}\cdot\sum_{m\in\mathbb{Z}}u_{m,\alpha}^{s+}e^{i\alpha_{m}y_{1}-i\beta_{m}h})ds(y)\\
    &&=0.
\end{eqnarray*}
Thus, $I_{1}=0$, for $I_{2}$, 
\begin{eqnarray*}
    I_{2}=-\int_{\partial B_{\rho}(x)}G^{qp}_{\alpha}(x,y)\frac{\partial u^{s}_{\alpha}(y)}{\partial\nu(y)}-u^{s}_{\alpha}(y)\frac{\partial G^{qp}_{\alpha}(x,y)}{\partial\nu(y)}ds(y).
\end{eqnarray*}
Recalling \eqref{qpgreen}, the singular term of $G_{\alpha}^{qp}(x,y), y\in\partial B_{\rho}(x)$ as $\rho\rightarrow 0$ is $G(x,y)=\frac{i}{4}H_{0}^{1}(k|x-y|)$. Thus, 
\begin{eqnarray*}
    I_{2}=-\int_{\partial B_{\rho}(x)}G(x,y)\frac{\partial u^{s}_{\alpha}(y)}{\partial\nu(y)}-u^{s}_{\alpha}(y)\frac{\partial G(x,y)}{\partial\nu(y)}ds(y).
\end{eqnarray*}
By the singularity of the fundamental solution to free-space Green's function, letting $\rho\rightarrow 0$, we have $I_{2}=-u_{\alpha}^{s}(x)$
\end{proof}
\end{theorem}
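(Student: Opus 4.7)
The plan is to derive the integral equation using Green's second identity on the central period, following a standard derivation but with careful attention to the quasi-periodic boundary cancellations. First I would subtract the incident equation $\Delta u_\alpha^{inc}+k^2 u_\alpha^{inc}=0$ from \eqref{Helmholtz} to get $\Delta u_\alpha^s+k^2 u_\alpha^s = k^2(1-\gamma)u_\alpha$, whose right-hand side is supported on $D$. Fix $x\in\Omega\setminus\bar D$, choose $h>H$ large enough that $D$ lies strictly between $\Gamma_{-h}$ and $\Gamma_h$, and let $\Omega_0$ be the resulting rectangle in the central period; remove a small ball $B_\rho(x)\subset\Omega_0\setminus\bar D$. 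Multiplying the PDE by $G^{qp}_\alpha(x,\cdot)$ and integrating over $\Omega_0\setminus B_\rho(x)$, Green's second identity converts the left-hand side into a boundary integral on $\partial\Omega_0\cup\partial B_\rho(x)$, while the right-hand side already reduces to the integral over $D$.

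Next I would show that all contributions from $\partial\Omega_0$ vanish. On the lateral boundaries $\partial\Omega_0^L$ and $\partial\Omega_0^R$, I would exploit that $u_\alpha^s$ is $\alpha$-quasi-periodic in $y_1$ while $G^{qp}_\alpha(x,\cdot)$ is $(-\alpha)$-quasi-periodic in $y_1$, so the two boundary integrands agree pointwise after translation by $\Lambda$, and the opposite outward normals produce exact cancellation. On $\Gamma_{\pm h}$, I would substitute the Rayleigh expansion for $u_\alpha^s$ and the spectral representation \eqref{spgreen} for $G^{qp}_\alpha$, being careful that for $y\in\Gamma_h$ with $x_2<h$ we have $|y_2-x_2|=h-x_2$ (similarly on $\Gamma_{-h}$). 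Applying orthogonality of the Fourier modes $\{e^{i\alpha_n y_1}\}$, each pair $(n,m)$ in the double sum collapses to $n=m$, and then the two terms $G\,\partial_{y_2}u_\alpha^s$ and $u_\alpha^s\,\partial_{y_2}G$ contribute opposite signs from the matched factors $i\beta_n$, giving zero.

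Finally, the boundary of the small ball $B_\rho(x)$ is handled by extracting the singular part of $G^{qp}_\alpha(x,y)$: only the $n=0$ term in \eqref{qpgreen}, namely the free-space fundamental solution $G(x,y)=\tfrac{i}{4}H_0^{(1)}(k|x-y|)$, is singular, while the other terms stay smooth near $x$. The classical asymptotic of the free-space Green's function then gives, as $\rho\to 0$, a contribution of $-u_\alpha^s(x)$ from this piece of the boundary, while the smooth remainder tends to zero. Combining, the boundary identity becomes $-u_\alpha^s(x)=\int_D k^2(1-\gamma)G^{qp}_\alpha(x,y)u_\alpha(y)\,dy$, which is equivalent to \eqref{qpLS}.

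The main technical obstacle is the cancellation on $\Gamma_{\pm h}$: it requires the one-sided spectral form of the Green's function, with the absolute value $|x_2-y_2|$ resolved correctly on each side, and then careful bookkeeping of signs so that after Fourier orthogonality the $\beta_n$'s in the two terms produce a perfect match. The lateral cancellation and the singular extraction at $x$ are then comparatively routine.
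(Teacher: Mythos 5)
Your proposal matches the paper's proof essentially step for step: the same Green's-identity argument on the truncated periodic rectangle minus a small ball, the same quasi-periodic cancellation on the lateral boundaries, the same use of the Rayleigh expansion together with the spectral form \eqref{spgreen} and Fourier orthogonality to kill the $\Gamma_{\pm h}$ contributions, and the same extraction of the free-space singularity to produce $-u_\alpha^s(x)$. The argument is correct and complete as outlined.
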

Further, we introduce the following function spaces.
\begin{eqnarray*}
    H^{1}_{\alpha,qp}(\Omega)=\lbrace u\in H^{1}(\Omega)|u(x_{1}+\Lambda,x_{2})e^{-i\Lambda\alpha}=u(x_{1},x_{2})\rbrace,
\end{eqnarray*}
with induced norm from $H^{1}(\Omega)$,and
\begin{eqnarray*}
    L^{2}_{\alpha,qp}(\Omega)=\lbrace u\in L^{2}(\Omega)|u(x_{1}+\Lambda,x_{2})e^{-i\Lambda\alpha}=u(x_{1},x_{2})\rbrace,
\end{eqnarray*} with induced norm from $L^{2}(\Omega)$
Now we further include the well-posedness of a $\alpha$ quasi-periodic scattering solution to the Helmholtz equation, whose proof is similar to that of \cite{Yang2}.
\begin{theorem}\label{thm:2.5}
    Assume that $\gamma(x)-1\in L^{\infty}(\Omega)$ is periodic in $x_{1}$ and $f(x)\in L^{2}_{\alpha,qp}(\Omega)$ is $\alpha$-quasi-periodic in $x_{1}$, with period $\Lambda$. Both of them are supported in $D$, for $x\in\Omega$. $w^{s}_{\alpha}$ is the $\alpha$-quasi-periodic scattering solution to the the Helmholtz equation 
\begin{eqnarray*}
    \Delta w_{\alpha}^{s}(x)+k^{2}\gamma(x)w_{\alpha}^{s}(x)=f(x)
\end{eqnarray*}
with Rayleigh expansion condition, namely, 
\begin{eqnarray}
w_{\alpha}^{s}=\left\{\begin{aligned}
\sum_{m\in\mathbb{Z}}w_{m,\alpha}^{s+}e^{i\alpha_{m}x_{1}+i\beta_{m}x_{2}},x_2\in \Omega_{H}^{+},\\
\sum_{m\in\mathbb{Z}}w_{m,\alpha}^{s-}e^{i\alpha_{m}x_{1}-i\beta_{m}x_{2}},x_2\in \Omega_{H}^{-},
\end{aligned}\right.\label{wexpan1}
\end{eqnarray}
Then we have, for some constant $C$ that is dependent on $k,|D|$,
\begin{eqnarray*}
    ||w_{\alpha}^{s}||_{H^{1}_{\alpha,qp}(\Omega)}\leq C||f||_{L^{2}_{p}(D)}
\end{eqnarray*}
\end{theorem}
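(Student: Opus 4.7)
The plan is to reduce the problem on the strip $\Omega$ to a variational problem on a bounded truncated cell, verify Fredholmness via a G{\aa}rding inequality induced by the Dirichlet-to-Neumann (DtN) map, and then combine uniqueness with the open mapping theorem to produce the stated bound. Pick $H_0 > H$ large enough that $D \subset \Omega_{H_0} := \{x\in\Omega : |x_2|<H_0\}$. Using the Rayleigh expansion \eqref{wexpan1} and expanding traces on $\Gamma_{\pm H_0}$ in the $\alpha$-quasi-periodic Fourier basis $\{e^{i\alpha_m x_1}\}$, I would introduce DtN operators $T^{\pm}: H^{1/2}_{\alpha,qp}(\Gamma_{\pm H_0}) \to H^{-1/2}_{\alpha,qp}(\Gamma_{\pm H_0})$ acting modewise by multiplication with $i\beta_m$. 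Standard estimates show $T^{\pm}$ is bounded and splits as $T^{\pm} = T^{\pm}_0 + K^{\pm}$, where $T^{\pm}_0$ (using $|\alpha_m|$ in place of $\beta_m$ for all $m$) has a real nonpositive symbol, and $K^{\pm}$ is finite rank (acting nontrivially only on the propagating indices $B_{\alpha}$).

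Next I would formulate the boundary value problem on $\Omega_{H_0}$: find $w \in H^1_{\alpha,qp}(\Omega_{H_0})$ with $\partial_{x_2} w = T^{\pm} w$ on $\Gamma_{\pm H_0}$, in weak form
\begin{equation*}
a(w,v) := \int_{\Omega_{H_0}}\!\bigl(\nabla w\cdot\overline{\nabla v} - k^2\gamma\, w\bar v\bigr)\,dx - \int_{\Gamma_{H_0}}\!(T^+w)\bar v\,ds - \int_{\Gamma_{-H_0}}\!(T^-w)\bar v\,ds = -\int_{\Omega_{H_0}}\! f\bar v\,dx
\end{equation*}
for all $v \in H^1_{\alpha,qp}(\Omega_{H_0})$. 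Writing $a = a_0 + b$ with $a_0$ containing the $\nabla\cdot\overline{\nabla}$ term, $+\|w\|_{L^2}^2$, and the $T^{\pm}_0$ pieces, $a_0$ is coercive on $H^1_{\alpha,qp}(\Omega_{H_0})$ (the boundary contributions have the correct sign because $-T^{\pm}_0$ is nonnegative), while $b$ involves a lower-order $L^2$ term and the finite-rank correction $K^{\pm}$, hence is compact by Rellich's theorem and the compactness of trace into $L^2$. By the Riesz representation and the Fredholm alternative, the operator associated with $a$ is Fredholm of index zero on $H^1_{\alpha,qp}(\Omega_{H_0})$, so existence and the estimate follow once injectivity is established.

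The main obstacle is uniqueness. Setting $f=0$ and testing with $\bar w$, the imaginary part of $a(w,w)=0$ reduces via $\mathrm{Im}(T^{\pm}w, w)$ to a sum of squares over the propagating modes $n \in B_{\alpha}$, forcing $w^{s\pm}_{n,\alpha}=0$ for all $n\in B_{\alpha}$. Thus the Rayleigh tails consist solely of evanescent modes and decay exponentially as $|x_2|\to\infty$; extending $w$ by its Rayleigh series outside $\Omega_{H_0}$ gives a quasi-periodic $H^1_{\mathrm{loc}}$ solution of $\Delta w + k^2\gamma w = 0$ on $\Omega$ that decays at infinity. Unique continuation from the evanescent region (where $w\equiv 0$ after passing $|x_2|$ above any finite level chosen beyond the decay of all modes, or by analyticity of the Fourier coefficients in $x_2$) then yields $w\equiv 0$ on $\Omega$; this is essentially the strategy of \cite{Yang2}.

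Finally, once the Fredholm operator associated with $a$ is both of index zero and injective, it is an isomorphism $H^1_{\alpha,qp}(\Omega_{H_0})\to (H^1_{\alpha,qp}(\Omega_{H_0}))^{*}$, whose inverse is bounded with norm depending only on $k$, $\gamma$, and $|D|$. Applying this inverse to the right-hand side $-f$ (extended by zero outside $D$, which lives in $(H^1_{\alpha,qp}(\Omega_{H_0}))^{*}$ with norm $\lesssim \|f\|_{L^2_p(D)}$) delivers $\|w\|_{H^1_{\alpha,qp}(\Omega_{H_0})}\leq C\|f\|_{L^2_p(D)}$. The estimate extends to $H^1_{\alpha,qp}(\Omega)$ because the Rayleigh tails outside $\Omega_{H_0}$ are controlled in $H^1$ by their traces on $\Gamma_{\pm H_0}$, which in turn are controlled by $\|w\|_{H^1_{\alpha,qp}(\Omega_{H_0})}$ through the trace theorem. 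This completes the proof outline.
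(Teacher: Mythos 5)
The paper offers no proof of this theorem at all --- it only remarks that the proof is ``similar to that of \cite{Yang2}'' --- so your variational route (truncation to a bounded cell, DtN operators with symbol $i\beta_m$, G{\aa}rding inequality plus Fredholm alternative) is exactly the standard and intended strategy, and the Fredholm half of your argument is sound apart from a small imprecision: with $T_0^{\pm}$ defined by the symbol $-|\alpha_m|$ for \emph{all} $m$, the correction $K^{\pm}$ has symbol $i\beta_m+|\alpha_m|$, which decays like $k^2/(2|\alpha_m|)$ on the evanescent modes; it is compact but not finite rank. (Taking $T_0^{\pm}$ with symbol $i\beta_m$ on $U_{\alpha}$ and $0$ on $B_{\alpha}$ makes $K^{\pm}$ genuinely finite rank and keeps $-T_0^{\pm}\ge 0$.)

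The genuine gap is in your uniqueness step. From $\operatorname{Im}a(w,w)=0$ you correctly conclude that the propagating Rayleigh coefficients vanish, so $w$ decays exponentially as $|x_2|\to\infty$. But a purely evanescent superposition $\sum_{m\in U_{\alpha}}c_m e^{i\alpha_m x_1-\sqrt{\alpha_m^2-k^2}\,|x_2|}$ is \emph{not} identically zero above any finite level unless every $c_m=0$, and analyticity of the Fourier coefficients in $x_2$ gives you nothing to continue from; your parenthetical claim is simply false, and unique continuation has no region of vanishing to start from. This is not a repairable slip: for real $\gamma>1$ the homogeneous quasi-periodic transmission problem can admit nontrivial exponentially decaying solutions (guided/trapped modes of the periodic slab), so injectivity --- and hence the unconditional estimate --- cannot be established by this or any argument without an extra hypothesis. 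You need either an absorption assumption ($\operatorname{Im}\gamma>0$ on $D$), or to exclude a discrete set of exceptional wavenumbers $k$ (which is what the Fredholm-of-index-zero structure actually delivers via the analytic Fredholm theorem), or to assume outright that the homogeneous problem has only the trivial solution; the theorem as stated in the paper tacitly makes such an assumption. A further, more cosmetic issue: once a propagating coefficient $w^{s\pm}_{n,\alpha}$ is nonzero, $w^s_{\alpha}$ is not in $H^1$ of the infinite strip $\Omega$ (propagating modes do not decay), so the final extension of the estimate from $\Omega_{H_0}$ to $\Omega$ in your last paragraph only works for the evanescent tails; the norm on the left of the theorem should be read over a truncated strip.
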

The final ingredient to our resolution analysis is that the propagating part of the wave carries the major contribution of the cross-correlation.

\begin{theorem}\label{thm:2.6}
Given a compactly supported region $D\subset\Omega$. For any $w^{s}_{\alpha}\in H^{1}_{\alpha,qp}(\Omega\setminus D)$ satisfies the Helmholtz equation 
\begin{eqnarray*}
    \Delta w_{\alpha}^{s}(x)+k^{2}w_{\alpha}^{s}(x)=0,
\end{eqnarray*}
in $\Omega\setminus D$, and the Rayleigh expansion condition \eqref{wexpan1}, taking the clock-wise direction, we have
then we have
\begin{eqnarray}
-Im(\int_{\partial D}\overline{\frac{\partial w^{s}_{\alpha}(y)}{\partial v(y)}}w^{s}_{\alpha}(y)ds(y))=\Lambda\sum_{n\in B_{\alpha}}\beta_{n}(|w_{n,\alpha}^{s+}|^2+|w_{n,\alpha}^{s-}|^2)\label{RLRes1}
\end{eqnarray}
\end{theorem}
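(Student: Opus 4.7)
The plan is to apply Green's second identity on a truncated exterior domain, use quasi-periodic cancellation on the vertical sides, and evaluate the flux on the horizontal sides with the Rayleigh expansion and Fourier orthogonality.

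Fix $h\geq H$ so that $\overline{D}\subset\{|x_{2}|<h\}$ and set $\Omega_{h}=\{x\in\Omega:|x_{2}|<h\}$. I would apply Green's identity to $w^{s}_{\alpha}$ and $\overline{w^{s}_{\alpha}}$ on $\Omega_{h}\setminus\overline{D}$; the bulk integral $\int(\overline{w^{s}_{\alpha}}\Delta w^{s}_{\alpha}-w^{s}_{\alpha}\Delta\overline{w^{s}_{\alpha}})\,dx$ vanishes because both functions solve Helmholtz with real $k^{2}$. Writing $\nu_{e}$ for the outward normal to $\Omega_{h}\setminus\overline{D}$, taking imaginary parts gives
\begin{equation*}
\mathrm{Im}\int_{\partial(\Omega_{h}\setminus\overline{D})}\overline{\partial_{\nu_{e}}w^{s}_{\alpha}}\,w^{s}_{\alpha}\,ds=0.
\end{equation*}

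On the vertical segments $\partial\Omega_{L,R}\cap\{|x_{2}|<h\}$, $\alpha$-quasi-periodicity of $w^{s}_{\alpha}$ and $\partial_{x_{1}}w^{s}_{\alpha}$ makes the products $\overline{\partial_{x_{1}}w^{s}_{\alpha}}\,w^{s}_{\alpha}$ agree at $x_{1}=\pm\Lambda/2$ (the phase factors $e^{\pm i\Lambda\alpha}$ cancel in the product), while the outward normals point in opposite directions, so these two contributions cancel exactly -- the same mechanism already used in the proof of Theorem \ref{thm:2.4}. On $\Gamma_{\pm h}$, I would substitute the Rayleigh expansion \eqref{wexpan1}, compute $\partial_{\nu_{e}}$ (which produces factors $\pm i\beta_{m}$), and invoke the orthogonality $\int_{-\Lambda/2}^{\Lambda/2}e^{i(\alpha_{m}-\alpha_{n})x_{1}}\,dx_{1}=\Lambda\delta_{mn}$ to collapse the double Fourier sum to a single one. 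For propagating modes $n\in B_{\alpha}$ the summand is purely imaginary and $h$-independent, contributing $-\Lambda\beta_{n}(|w^{s+}_{n,\alpha}|^{2}+|w^{s-}_{n,\alpha}|^{2})$ to the imaginary part; for evanescent modes $n\in U_{\alpha}$ the factor $\overline{\beta_{n}}=-\beta_{n}$ together with $e^{i(\beta_{n}-\overline{\beta_{n}})h}=e^{-2\sqrt{\alpha_{n}^{2}-k^{2}}\,h}$ makes the summand real, so it vanishes under $\mathrm{Im}$.

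On $\partial D$ the outward normal to $\Omega_{h}\setminus\overline{D}$ is the negative of the normal $\nu$ adopted in the statement (the clockwise convention orienting $\nu$ to point out of $D$), which flips one sign. Collecting the surviving terms yields
\begin{equation*}
-\Lambda\sum_{n\in B_{\alpha}}\beta_{n}\bigl(|w^{s+}_{n,\alpha}|^{2}+|w^{s-}_{n,\alpha}|^{2}\bigr)-\mathrm{Im}\int_{\partial D}\overline{\partial_{\nu}w^{s}_{\alpha}}\,w^{s}_{\alpha}\,ds=0,
\end{equation*}
which rearranges to \eqref{RLRes1}. The only genuine obstacle is sign book-keeping among three independent sources: the outward normal on each piece of $\partial(\Omega_{h}\setminus\overline{D})$, the identity $\mathrm{Im}(\overline{w}\,\partial_{\nu}w)=-\mathrm{Im}(\overline{\partial_{\nu}w}\,w)$, and the relation $\overline{\beta_{n}}=-\beta_{n}$ on the evanescent branch; once these conventions are fixed, the computation is mechanical and $h$ drops out of the final identity.
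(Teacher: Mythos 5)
Your proposal is correct and follows essentially the same route as the paper: Green's identity on the truncated domain $\Omega_{h}\setminus\overline{D}$ (the paper uses the first identity and discards the real volume term under $\mathrm{Im}$, you use the second identity for $w$ and $\overline{w}$ — an equivalent bookkeeping), cancellation of the vertical sides by quasi-periodicity, and evaluation of the flux through $\Gamma_{\pm h}$ via the Rayleigh expansion and Fourier orthogonality, with only the propagating modes surviving the imaginary part. The sign accounting, including the orientation flip on $\partial D$ and the reality of the evanescent contributions, matches the paper's computation.
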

\begin{proof}
Take $\Gamma_{h}$ and $\Gamma_{-h}$ that are the line sections above and below $H$, $-H$ in the Rayleigh expansion condition.

We take a rectangular region $\Omega_{h}$ containing $D$, with $\Gamma_{h},\Gamma_{-h}$ the upper and lower horizontal part, while the left and right boundaries being located on $\partial \Omega_{L}$ and $\partial\Omega_{R}$. Using partial integration, and that $\Delta w_{\alpha}^{s}+k^{2}w_{\alpha}^{s}=0$ on $\Omega_{h}\setminus D$, we have
\begin{eqnarray}
\int_{\partial D}\overline{\frac{\partial w_{\alpha}^{s}(y)}{\partial v(y)}}w_{\alpha}^{s}(y)ds(y)=\int_{\Omega_{h}\setminus D}|\nabla w_{\alpha}^{s}|^{2}-k^{2}|w_{\alpha}^{s}|^{2}dy+\int_{\partial \Omega_{h}}\overline{\frac{\partial w_{\alpha}^{s}(y)}{\partial v(y)}}w_{\alpha}^{s}(y)ds(y).
\end{eqnarray}
Namely,
\begin{eqnarray*}
Im\int_{\partial D}\overline{\frac{\partial w_{\alpha}^{s}(y)}{\partial v(y)}}w_{\alpha}^{s}(y)ds(y)=Im(\int_{\partial \Omega_{h}}\overline{\frac{\partial w_{\alpha}^{s}(y)}{\partial v(y)}}w_{\alpha}^{s}(y)ds(y)).
\end{eqnarray*} 
For the integral on the right hand side, since $w_{\alpha}^{s}(y)$ is $\alpha$-quasi-periodic, the integrand inside the integral is periodic in $y_{1}$ direction, thus the left hand part and the right hand part of the section vanishes due to quasi-periodicity. Then
\begin{eqnarray*}
Im\int_{\partial D}\overline{\frac{\partial w_{\alpha}^{s}(y)}{\partial v(y)}}w_{\alpha}^{s}(y)ds(y)=Im(\int_{ \Gamma_{h}\cup \Gamma_{-h}}\overline{\frac{\partial w_{\alpha}^{s}(y)}{\partial v(y)}}w_{\alpha}^{s}(y)ds(y)).
\end{eqnarray*} 
For the integral on $\Gamma_{h}$, since $\nu(y)=(0,1)$using the Rayleigh expansion, we obtain that

\begin{eqnarray*}
-Im(\int_{\Gamma_{h}}\overline{\frac{\partial w^{s}_{\alpha}(y)}{\partial v(y)}}w^{s}_{\alpha}(y)ds(y))=-Im(\int_{\Gamma_{h}}\overline{\frac{\partial w^{s}_{\alpha}(y)}{\partial y_{2}}}w^{s}_{\alpha}(y)ds(y))\\
=\Lambda Im(\sum_{n\in\mathbb{Z}}\beta_{n}|w_{n,\alpha}^{s+}|^2)=\Lambda\sum_{n\in B_{\alpha}}\beta_{n}(|w_{n,\alpha}^{s+}|^2)
\end{eqnarray*}
Where the last equality is due to the fact that $\beta_{n}$ are real if $n\in B_{\alpha}$. Similarly, noticing the change of direction in $\nu(y)=(0,-1)$, we have
\begin{eqnarray*}
-Im(\int_{\Gamma_{-h}}\overline{\frac{\partial w^{s}_{\alpha}(y)}{\partial v(y)}}w^{s}_{\alpha}(y)ds(y))=Im(\int_{-\frac{\Lambda}{2}}^{\frac{\Lambda}{2}}\overline{\frac{\partial w^{s}_{\alpha}(y_{1},-h)}{\partial y_{2}}}w^{s}_{\alpha}(y)ds(y))=\Lambda\sum_{n\in B_{\alpha}}\beta_{n}(|w_{n,\alpha}^{s-}|^2),
\end{eqnarray*}
This completes the proof. 
\end{proof}
\section{The RTM method}\label{sect3}
We are now ready to propose the following RTM imaging functionals
\begin{eqnarray}
\mathcal{I}_U(z)=Im\sum_{n\in B_{\alpha}}\int_{\Gamma_{h}}\frac{i}{\beta_{n}}u^{inc}_{\alpha_{n}}(z)\frac{\partial G^{qp}_{\alpha}(x_{r},z)}{\partial x_{2}}\overline{u^{s}_{\alpha_{n}}(x_{r})}ds(x_{r}),\label{RTMfunc}
\end{eqnarray}
which is named the upper RTM functional, and
\begin{eqnarray}
\mathcal{I}_L(z)=-Im\sum_{n\in B_{\alpha}}\int_{\Gamma_{-h}}\frac{i}{\beta_{n}}u^{inc}_{\alpha_{n}}(z)\frac{\partial G^{qp}_{\alpha}(x_{r},z)}{\partial x_{2}}\overline{u^{s}_{\alpha_{n}}(x_{r})}ds(x_{r}),\label{RTMfuncL}
\end{eqnarray}
which is named the lower RTM functional. We remark that, due to the $\alpha$ quasi-periodicity of $u^{inc}_{\alpha_{n}}$ and the $-\alpha$ quasi-periodicity of $G_{\alpha}^{qp}$, both $\mathcal{I}_{U}$ and $\mathcal{I}_{L}$ are naturally periodic in $z_{1}$ direction. Here,
we take $\mathcal{I}_{L}(z)$ for instance, explain the functional as a two-step algorithm:
\begin{Algorithm}
Given the data $u^{s}_{\alpha_{n}}(x_{r})$, which is the measurement of scattered field on $\Gamma_{h}$, at points $x_{r}\in\Gamma_{-h}=\{(x_{1},-h)|x_{1}\in(-\frac{\Lambda}{2},\frac{\Lambda}{2}), h>0\}, r=1,\cdots,N_{r}$ for all $\alpha_{n}\in B_{\alpha}$,
\begin{enumerate}
\item Back propagation: For all $r=1,\cdots,N_{r}$, $\alpha_{n},n\in B_{\alpha}$, compute 
\begin{eqnarray*}
v_{\alpha_{n}}(z)=\frac{|\Gamma_{r}|}{N_{r}}\sum_{r=1}^{N_{r}}\frac{\partial G^{qp}_{\alpha}(x_{r},z)}{\partial x_{2}}\overline{u^{s}_{\alpha_{n}}(x_{r})}.
\end{eqnarray*}
\item Cross-correlation: For all $z\in\Omega$, calculate:
\begin{eqnarray*}
\mathcal{\hat{I}}_{L}(z)=-Im\sum_{n\in B_{\alpha}}\frac{i}{\beta_{n}}u^{inc}_{\alpha_{n}}(z)v_{\alpha_{n}}(z).
\end{eqnarray*}
\end{enumerate}
\end{Algorithm}
If we use the measurement data on $\Gamma_h$, we will have the upper RTM algorithm similarly. It is seen that $\hat{\mathcal{I}}_{L}(z)$ is an approximation of the continuous integral \eqref{RTMfunc}. 
\section{Resolution analysis}\label{sect4}
In this section we analyze the resolution of the proposed RTM methods. Firstly, we consider the resolution for lower RTM algorithm.
\subsection{The resolution for lower RTM}
\begin{theorem}\label{thm:4.1}
The lower RTM functional has the following resolution analysis
\begin{eqnarray}
\mathcal{I}_{L}(z)=\Lambda^2 Im\int_{D}k^{2}(1-\gamma(y))(\overline{F^{L}_{\alpha}(y,z)}+\overline{v^{s}_{\alpha}(y,z)})F^{L}_{\alpha}(y,z)dy +O(h^{-1}).\label{Il}
\end{eqnarray}
Where $v_{n,\alpha}^{s}$, are the Rayleigh coefficients to the  scattering solution $v^{s}_{\alpha}$ to
\begin{equation}
\Delta_{y} v_{\alpha}^{s}(y,z)+k^{2}\gamma(y)v_{\alpha}^{s}(y,z)=k^{2}(1-\gamma(y))F^{L}_{\alpha}(y,z)\label{AdjEq1}
\end{equation}
with the Rayleigh scattering condition, for $n\in B_{\alpha}$. Further,in terms of the Rayleigh-coefficients, we have
\begin{equation}
\mathcal{I}_{L}(z)=\Lambda^{2}\sum_{n\in B_{\alpha}}\beta_{n}(|v^{s+}_{n,\alpha}(z)|^{2}+|v^{s-}_{n,\alpha}(z)|^{2})+O(h^{-1}).\label{RTMRes1}
\end{equation}
\end{theorem}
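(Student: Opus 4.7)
The plan is to convert the surface integral in $\mathcal{I}_L(z)$ into a volume integral over $D$ by substituting the Lippmann-Schwinger representation for $u^s_{\alpha_n}$, then apply the Helmholtz-Kirchhoff asymptotic from Corollary \ref{cor:2.1} to produce the point spread function $F^L_\alpha$, and finally reduce the resulting volume integral to a boundary form via Green's identity so that Theorem \ref{thm:2.6} yields the Rayleigh-coefficient expression \eqref{RTMRes1}.

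\textbf{Step 1 (Lippmann-Schwinger and order swap).} Using Theorem \ref{thm:2.4}, I would replace $\overline{u^s_{\alpha_n}(x_r)}$ in \eqref{RTMfuncL} by $\int_D k^2\overline{(\gamma(y)-1)}\,\overline{G^{qp}_\alpha(x_r,y)}\,\overline{u_{\alpha_n}(y)}\,dy$ (noting that $\alpha_n$-quasi-periodicity and $\alpha$-quasi-periodicity agree), and then Fubini to exchange the $\Gamma_{-h}$ and $D$ integrations. This leaves an inner boundary integral of the form $\int_{\Gamma_{-h}}\frac{\partial G^{qp}_\alpha(x_r,z)}{\partial x_2}\overline{G^{qp}_\alpha(x_r,y)}\,ds$. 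Inspecting the proof of Theorem \ref{thm:2.1} (specifically the propagating piece of \eqref{eva1} with the sign flip that comes from moving the $\partial_{x_2}$ onto $G^{qp}_\alpha(x_r,z)$ instead of its conjugate), one sees this inner integral equals $-\tfrac{1}{2}F^L_\alpha(y,z) + O(h^{-1})$ uniformly for $y,z$ in compact subsets of $\Omega^+_{-h}$, by the argument of Corollary \ref{cor:2.1}.

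\textbf{Step 2 (identification of $v^s_\alpha$).} After this substitution, the sum $\sum_{n\in B_\alpha}\frac{i}{\beta_n}u^{inc}_{\alpha_n}(z)\overline{u_{\alpha_n}(y)}$ appears. I would recognize, using $F^L_\alpha(y,z)=\frac{i}{2\Lambda}\sum_{n\in B_\alpha}\frac{1}{\beta_n}\overline{u^{inc}_{\alpha_n}(z)}u^{inc}_{\alpha_n}(y)$, that the incident part of this sum equals $-2\Lambda\,\overline{F^L_\alpha(y,z)}$, while the scattered part equals $-2\Lambda\,\overline{v^s_\alpha(y,z)}$ for
\begin{equation*}
v^s_\alpha(y,z)=\frac{i}{2\Lambda}\sum_{n\in B_\alpha}\frac{1}{\beta_n}\overline{u^{inc}_{\alpha_n}(z)}\,u^s_{\alpha_n}(y).
\end{equation*}
That this $v^s_\alpha$ solves \eqref{AdjEq1} follows by linearity: each $u^s_{\alpha_n}$ satisfies $(\Delta+k^2\gamma)u^s_{\alpha_n}=k^2(1-\gamma)u^{inc}_{\alpha_n}$, and summing with the above coefficients produces exactly $k^2(1-\gamma)F^L_\alpha$ on the right-hand side, while the Rayleigh condition is inherited. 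Collecting the constants yields \eqref{Il}.

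\textbf{Step 3 (passage to Rayleigh coefficients).} To obtain \eqref{RTMRes1}, I would split the integrand as $k^2(1-\gamma)|F^L_\alpha|^2 + k^2(1-\gamma)F^L_\alpha\overline{v^s_\alpha}$. The first piece is real (since $\gamma$ is real), so it drops under $\mathrm{Im}$. For the second piece I use \eqref{AdjEq1} to replace $k^2(1-\gamma)F^L_\alpha$ by $(\Delta+k^2\gamma)v^s_\alpha$, and Green's first identity on $D$ shows that the volume contributions $-\int_D|\nabla v^s_\alpha|^2+k^2\int_D\gamma|v^s_\alpha|^2$ are real, so only the boundary integral $\int_{\partial D}\frac{\partial v^s_\alpha}{\partial\nu}\overline{v^s_\alpha}\,ds$ survives $\mathrm{Im}$. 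Applying Theorem \ref{thm:2.6} to $v^s_\alpha$ (which satisfies the free Helmholtz equation outside $D$ with Rayleigh expansion) converts this boundary term into the sum of squared Rayleigh coefficients, producing \eqref{RTMRes1} up to the error $O(h^{-1})$ inherited from Step 1.

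\textbf{Main obstacle.} The conceptually delicate step is Step 2, where one must recognize that the sum over incident directions $\alpha_n\in B_\alpha$ manufactures a single adjoint-type scattering problem whose effective ``incident field'' is the point spread function $F^L_\alpha(\cdot,z)$; without this identification the expression \eqref{Il} is not visibly a resolution formula. The rest is bookkeeping of the real/imaginary parts in Green's identity and consistent tracking of the constants $\Lambda$, $\beta_n$, and the sign produced by differentiating $|x_2-z_2|$ on $\Gamma_{-h}$.
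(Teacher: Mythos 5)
Your proposal is correct and follows essentially the same route as the paper's own proof: Lippmann--Schwinger substitution plus Fubini, Corollary \ref{cor:2.1} to produce $-\tfrac12 F^L_\alpha(y,z)$ from the inner $\Gamma_{-h}$ integral, identification of the adjoint field $v^s_\alpha(y,z)=\sum_{n\in B_\alpha}\tfrac{i}{2\Lambda\beta_n}u^s_{\alpha_n}(y)\overline{u^{inc}_{\alpha_n}(z)}$, and then Green's identity together with Theorem \ref{thm:2.6}. Your bookkeeping in fact yields the prefactor $\Lambda$ (not $\Lambda^2$) in \eqref{Il}, which agrees with the intermediate formula \eqref{res1} in the paper's proof and with the final $\Lambda^2$ in \eqref{RTMRes1}; the $\Lambda^2$ in the statement \eqref{Il} appears to be a typo.
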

\begin{proof}
Recalling the lower RTM functional for all $z\in\Omega^+_{-h}$
\begin{equation}
\mathcal{I}_{L}(z)=-Im\sum_{n\in B_{\alpha}}\int_{\Gamma_{-h}}\frac{i}{\beta_{n}}u_{\alpha_{n}}^{inc}(z)\frac{\partial G^{qp}_{\alpha}(x_{r},z)}{\partial x_{2}}\overline{u_{\alpha_{n}}^{s}(x_{r})}ds(x_{r}).
\end{equation} 
Using Theorem \ref{thm:2.3}, and Corollary \ref{cor:2.1}, we have
\begin{eqnarray*}
&\int_{\Gamma_{-h}}&\frac{\partial G^{qp}_{\alpha}(x_{r},z)}{\partial x_{2}}\overline{u_{\alpha_{n}}^{s}(x_{r})}ds(x_{r})\\
=&\int_{\Gamma_{-h}}&\frac{\partial G^{qp}_{\alpha}(x_{r},z)}{\partial x_{2}}(\int_{D}k^{2}(\gamma(y)-1)\overline{u_{\alpha_{n}}(y)}\overline{G^{qp}_{\alpha}(x_{r},y)}dy)ds(x_{r})\\
=&\int_{D}&k^{2}(\gamma(y)-1)\overline{u_{\alpha_{n}}(y)}\int_{\Gamma_{-h}}\frac{\partial G^{qp}_{\alpha}(x_{r},z)}{\partial x_{2}}\overline{G^{qp}_{\alpha}(x_{r},y)}ds(x_{r})dy\\
=&\int_{D}&k^{2}(\gamma(y)-1)\overline{u_{\alpha_{n}}(y)}(\frac{1}{2}\overline{F^{L}_{\alpha}(z,y)}+\overline{R^{L}_{\alpha}(z,y;h)})dy.
\end{eqnarray*}
Here $|R^{L}_{\alpha}(z,y;h)|\leq C(h^{-1})$.
we obtain that
\begin{eqnarray*}
\mathcal{I}_{L}(z)=\Lambda Im\int_{D}k^{2}(1-\gamma(y))\overline{F_{\alpha}^{L}(z,y)}\sum_{n\in B_{\alpha}}(\frac{i}{2\Lambda\beta_{n}}u^{inc}_{\alpha_{n}}(z)\overline{u_{\alpha_{n}}(y)})dy  + O(h^{-1}).
\end{eqnarray*}
Now we introduce
\begin{eqnarray*}
v_{\alpha}(y,z) &=& \sum_{n\in B_{\alpha}}\frac{i}{2\Lambda\beta_{n}}u_{\alpha_{n}}(y) \overline{u^{inc}_{\alpha_{n}}(z)},
\end{eqnarray*}
it follows that
\begin{eqnarray*}
    \sum_{n\in B_{\alpha}}\frac{i}{2\Lambda\beta_{n}}u^{inc}_{\alpha_{n}}(z)\overline{u_{\alpha_n}(y)}=-\overline{ \sum_{n\in B_{\alpha}}\frac{i}{2\Lambda\beta_{n}}u_{\alpha_{n}}(y)\overline{u^{inc}_{\alpha_{n}}(z)}}=-\overline{v_{\alpha}(y,z)}.
\end{eqnarray*}
Further, since
\begin{eqnarray*}
\overline{F^{L}_{\alpha}(z,y)}&=&\overline{\sum_{n\in B_{\alpha}}\frac{i}{2\Lambda\beta_{n}}e^{i\alpha_{n}(z_{1}-y_{1})-i\beta_{n}(z_{2}-y_{2})}}\\
&=&-\sum_{n\in B_{\alpha}}\frac{i}{2\Lambda\beta_{n}}e^{i\alpha_{n}(y_{1}-z_{1})-i\beta_{n}(y_{2}-z_{2})}\\
&=&-F^{L}_{\alpha}(y,z).
\end{eqnarray*} 
It follows that
\begin{eqnarray*}
\mathcal{I}_{L}(z)=\Lambda Im\int_{D}k^{2}(1-\gamma(y))F_{\alpha}^{L}(y,z)\overline{v_{\alpha}(y,z)}dy  + O(h^{-1}).
\end{eqnarray*}
Recalling that 
\begin{eqnarray*}
u_{\alpha_{n}}(y)=u^{inc}_{\alpha_{n}}(y)+u^{s}_{\alpha_{n}}(y).
\end{eqnarray*}
If we further introduce
\begin{eqnarray*}
v^{s}_{\alpha}(y,z)&=&v_{\alpha}(y,z)-F^{L}_{\alpha}(y,z)\\
&=&\sum_{n\in B_{\alpha}}\frac{i}{2\Lambda\beta_{n}}u_{\alpha_{n}}(y)\overline{u^{inc}_{\alpha_{n}}(z)}-\sum_{n\in B_{\alpha}}\frac{i}{2\Lambda\beta_{n}}u_{\alpha_{n}}^{inc}(y)\overline{u^{inc}_{\alpha_{n}}(z)}\\
&=&\sum_{n\in B_{\alpha}}\frac{i}{2\Lambda\beta_{n}}u_{\alpha_{n}}^{s}(y)\overline{u^{inc}_{\alpha_{n}}(z)}.
\end{eqnarray*}
We observe that it is the solution to 
\begin{eqnarray*}
\Delta_{y} v_{\alpha}^{s}(y,z)+k^{2}v_{\alpha}^{s}(y,z)=k^{2}(1-\gamma(y))v_{\alpha}(y,z).
\end{eqnarray*}
Thus it is the $\alpha$-quasi-periodic solution to
\begin{eqnarray*}
\Delta v_{\alpha}^{s}(y,z)+k^{2}\gamma(y)v_{\alpha}^{s}(y,z)=k^{2}(1-\gamma(y))F^{L}_{\alpha}(y,z) 
\end{eqnarray*}
and satisfies the Rayleigh expansion condition,
\begin{eqnarray*}
v_{\alpha}^{s}(y,z)=\left\{\begin{aligned}
\sum_{m\in\mathbb{Z}}v_{m,\alpha}^{s+}(z)e^{i\alpha_{m}y_{1}+i\beta_{m}y_{2}},y_2\in \Omega_{H}^{+},\\
\sum_{m\in\mathbb{Z}}v_{m,\alpha}^{s-}(z)e^{i\alpha_{m}y_{1}-i\beta_{m}y_{2}},y_2\in \Omega_{H}^{-}.
\end{aligned}
\right.
\end{eqnarray*}
Eventually, we have
\begin{eqnarray}
\mathcal{I}_{L}(z)=\Lambda Im\int_{D}k^{2}(1-\gamma(y))(\overline{F^{L}_{\alpha}(y,z)}+\overline{v^{s}_{\alpha}(y,z)})F^{L}_{\alpha}(y,z)dy +O(h^{-1}),\label{res1}
\end{eqnarray}
which is
\begin{eqnarray*}
\mathcal{I}_{L}(z)=\Lambda Im\int_{D}k^{2}(1-\gamma(y))\overline{v_{\alpha}^{s}(y,z)}F^{L}_{\alpha}(y,z)dy +O(h^{-1}).
\end{eqnarray*}
Since that 
\begin{eqnarray}
&&Im\int_{D} k^{2}(1-\gamma(y))F^{L}_{\alpha}(y,z)\overline{v^{s}_{\alpha}(y,z)}dy
=Im\int_{D}(\Delta v_{\alpha}^{s}(y,z)+k^{2}\gamma(y)v_{\alpha}^{s}(y,z))\overline{v_{\alpha}^{s}(y,z)}dy\nonumber\\
&&\qquad\qquad=Im\int_{D} \Delta v_{\alpha}^{s}(y,z)\overline{v_{\alpha}^{s}(y,z)}dy\nonumber\\
&&\qquad\qquad=-Im\int_{\partial D}\frac{\partial \overline {v_{\alpha}^{s}(y,z)}}{\partial \nu (y)}v_{\alpha}^{s}(y,z)ds.\label{bintegral}
\end{eqnarray}
By Theorem \ref{thm:2.6}, we obtain that 
\begin{eqnarray*}
\mathcal{I}_{L}(z)&=&-\Lambda Im\int_{\partial D}\frac{\partial \overline {v_{\alpha}^{s}(y,z)}}{\partial \nu (y)}v_{\alpha}^{s}(y,z)ds(y)+O(h^{-1})\\
&=&\Lambda^2\sum_{n\in B_{\alpha}}\beta_{n}(|v^{s+}_{n,\alpha}(z)|^{2}+|v^{s-}_{n,\alpha}(z)|^{2})+O(h^{-1}).
\end{eqnarray*}
\end{proof}
\subsection{The resolution for upper RTM}
For $\mathcal{I}_{U}(z)$, following similar steps as the proof of Theorem \ref{thm:4.1} until \eqref{res1}, we arrive at the following result.
\begin{theorem}\label{thm:4.2}
The upper RTM functional has the following representation
\begin{eqnarray}
\mathcal{I}_{U}(z)=\Lambda Im\int_{D}k^{2}(1-\gamma(y))(\overline{F^{L}_{\alpha}(y,z)}+\overline{v^{s}_{\alpha}(y,z)})F^{U}_{\alpha}(y,z)dy +O(h^{-1}).\label{Iu}
\end{eqnarray}
Where $v_{\alpha}^{s}$, is the $\alpha$-quasi-periodic scattering solution 
\begin{eqnarray}
\Delta_{y} v_{\alpha}^{s}(y,z)+k^{2}\gamma(y)v_{\alpha}^{s}(y,z)=k^{2}(1-\gamma (y))F^{L}_{\alpha}(y,z)
\end{eqnarray}
with the Rayleigh expansion condition.
\end{theorem}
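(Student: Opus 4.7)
The strategy is to mirror the proof of Theorem \ref{thm:4.1} step by step, swapping in the upper-side analogue of the Helmholtz–Kirchhoff corollary and tracking one extra sign change that arises because $\mathcal{I}_U$ carries $+\mathrm{Im}$ rather than $-\mathrm{Im}$ and the corollary after Theorem \ref{thm:2.2} produces $-\tfrac12 F^U_\alpha$ rather than $+\tfrac12 F^L_\alpha$.

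First I would substitute the Lippmann–Schwinger representation (Theorem \ref{thm:2.4}) for $u^s_{\alpha_n}(x_r)$ into the definition \eqref{RTMfunc}, swap the order of integration (legal since $D$ is bounded and $\Gamma_h$ is a compact segment), and isolate the inner boundary integral
\begin{eqnarray*}
\int_{\Gamma_{h}}\frac{\partial G^{qp}_{\alpha}(x_{r},z)}{\partial x_{2}}\overline{G^{qp}_{\alpha}(x_{r},y)}\,ds(x_{r}).
\end{eqnarray*}
This quantity is the complex conjugate of the one handled by the corollary following Theorem \ref{thm:2.2} after interchanging $y$ and $z$, so it evaluates to $-\tfrac12\overline{F^U_\alpha(z,y)}+\overline{R^U_\alpha(z,y;h)}$. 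Using the evident identity $\overline{F^U_\alpha(z,y)}=-F^U_\alpha(y,z)$ (since $\beta_n$ is real on $B_\alpha$), this becomes $\tfrac12 F^U_\alpha(y,z)+\overline{R^U_\alpha(z,y;h)}$, with the remainder uniformly $O(h^{-1})$.

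Next, following exactly the algebra of Theorem \ref{thm:4.1}, I would reintroduce
\begin{eqnarray*}
v_{\alpha}(y,z)=\sum_{n\in B_{\alpha}}\frac{i}{2\Lambda\beta_{n}}u_{\alpha_{n}}(y)\overline{u^{inc}_{\alpha_{n}}(z)},
\end{eqnarray*}
and observe that $\sum_{n\in B_\alpha}\frac{i}{2\Lambda\beta_n}u^{inc}_{\alpha_n}(z)\overline{u_{\alpha_n}(y)}=-\overline{v_\alpha(y,z)}$. Folding this into the expression for $\mathcal{I}_U$ and accounting for the overall sign (the $+\mathrm{Im}$ in \eqref{RTMfunc} combined with the $(\gamma-1)$ from the Lippmann–Schwinger kernel, together with the minus just produced) yields
\begin{eqnarray*}
\mathcal{I}_{U}(z)=\Lambda\,\mathrm{Im}\int_{D}k^{2}(1-\gamma(y))F^{U}_{\alpha}(y,z)\overline{v_{\alpha}(y,z)}\,dy+O(h^{-1}).
\end{eqnarray*}

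Finally, splitting $u_{\alpha_n}=u^{inc}_{\alpha_n}+u^s_{\alpha_n}$ in the definition of $v_\alpha$ gives the decomposition $v_\alpha=F^L_\alpha+v^s_\alpha$, where $v^s_\alpha$ is precisely the $\alpha$-quasi-periodic scattering solution identified in Theorem \ref{thm:4.1}, satisfying
\begin{eqnarray*}
\Delta_{y}v^s_\alpha(y,z)+k^{2}\gamma(y)v^s_\alpha(y,z)=k^{2}(1-\gamma(y))F^{L}_{\alpha}(y,z)
\end{eqnarray*}
with the Rayleigh expansion condition; the driving term remains $F^L_\alpha$ because this term originates from $v_\alpha$, which is built from $u^{inc}$ and $u$ and is independent of which measurement line we chose. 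Substituting this decomposition into the display above produces exactly \eqref{Iu}.

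The main technical obstacle is bookkeeping the complex conjugations and signs in step one: the corollary after Theorem \ref{thm:2.2} is stated in a form involving $\overline{G^{qp}_\alpha(x,y)}$ and an outward-looking derivative, while the RTM functional features the derivative on $G^{qp}_\alpha(x_r,z)$ and the conjugate on $u^s_{\alpha_n}$. Once the identity $\overline{F^U_\alpha(z,y)}=-F^U_\alpha(y,z)$ is used, everything aligns; if instead one leaves the $F^L_\alpha$ on the outside and $F^U_\alpha$ on the inside the final formula takes the wrong form, so this sign tracking is essential.
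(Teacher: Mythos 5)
Your proposal is correct and is exactly the route the paper intends: the paper gives no separate proof of Theorem \ref{thm:4.2}, stating only that one follows the steps of Theorem \ref{thm:4.1} up to \eqref{res1} with the upper-side corollary in place of Corollary \ref{cor:2.1}, which is what you carry out. Your sign bookkeeping (the $-\tfrac12 F^U_\alpha$ from the corollary, the identity $\overline{F^U_\alpha(z,y)}=-F^U_\alpha(y,z)$, and the $+\mathrm{Im}$ in \eqref{RTMfunc}) and your observation that the source term of $v^s_\alpha$ stays $F^L_\alpha$ because $v_\alpha$ is built from the incident fields are both right.
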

From \eqref{Iu}, we see that the decaying property of $Im(F^{L}_{\alpha})$ and $Im(F^{U}_{\alpha})$ as $z$  leaves $\partial D$, gives that $\mathcal{I}_{U}(z)$ has the decaying property as $z$ leaves $\partial D$, in the probing area $\Omega_{0}$.
On the other hand, as $z\rightarrow\partial D$, we denote the main part of $\mathcal{I}_U(z)$ by
\begin{eqnarray*}
    \tilde{\mathcal{I}}_{U}(z)=\Lambda Im\int_{D}k^{2}(1-\gamma(y))(\overline{F^{L}_{\alpha}(y,z)}+\overline{v^{s}_{\alpha}(y,z)})F^{U}_{\alpha}(y,z)dy. \label{Imajor}
\end{eqnarray*}
Recalling
\begin{eqnarray*}
F^{U}_{\alpha}(y,z)=F^{1}_{\alpha}(y,z)+iF^{2}_{\alpha}(y,z),\quad F^{L}_{\alpha}(y,z)=F^{1}_{\alpha}(y,z)-iF^{2}_{\alpha}(y,z).\label{expan}
\end{eqnarray*}
We let 
$v^{i}_{\alpha}(y,z),i=1,2, n\in B_{\alpha}$, be the $\alpha$-quasi-periodic scattering solution that corresponds to the equations
\begin{eqnarray*}
\Delta v^{i}_{\alpha}(y,z) + k^2 \gamma(y)v^{i}_{\alpha}(y,z)=k^{2}(1-\gamma(y))F^{i}_{\alpha}(y,z),
\end{eqnarray*} 
and satisfy the Rayleigh expansion condition
\begin{eqnarray*}
v_{\alpha}^{i}(y,z)=\left\{\begin{aligned}
\sum_{m\in\mathbb{Z}}v_{m,\alpha}^{i+}(z)e^{i\alpha_{m}y_{1}+i\beta_{m}y_{2}},y_2\in \Omega_{H}^{+},\\
\sum_{m\in\mathbb{Z}}v_{m,\alpha}^{i-}(z)e^{-i\alpha_{m}y_{1}+i\beta_{m}y_{2}},y_2\in \Omega_{H}^{-}.
\end{aligned}\right.
\end{eqnarray*}
It follows that
\begin{eqnarray}
\tilde{\mathcal{I}}_{U}(z)&=&\Lambda Im\int_{D}k^{2}(1-\gamma(y))(\overline{F^{1}_{\alpha}}+i\overline{F^{2}_{\alpha}}+\overline{v_{\alpha}^{1}}+i\overline{v_{\alpha}^{2}})(F^{1}_{\alpha}+iF^{2}_{\alpha})dy\nonumber \\
&=&\Lambda Im\int_{D}k^{2}(1-\gamma(y))F_{\alpha}^{1}\overline{v_{\alpha}^{1}}dy+R_{1}(z)+ R_{2}(z).\label{Iurtm}
\end{eqnarray}
Here we have
\begin{eqnarray}
R_{1}(z)=-\Lambda Im\int_{D}(1-\gamma(y))F_{\alpha}^{2}(y,z)\overline{v_{\alpha}^{2}(y,z)}dy,\label{IurtmR1}
\end{eqnarray}
and
\begin{eqnarray}
R_{2}(z)=\Lambda Im\int_{D}ik^{2}(1-\gamma(y))(\overline{v_{\alpha}^{2}}F_{\alpha}^{1}+\overline{v_{\alpha}^{1}}F_{\alpha}^{2}+\overline{F_{\alpha}^1}F_{\alpha}^{2}+\overline{F_{\alpha}^{2}}F_{\alpha}^{1})dy.
\label{IurtmR2}\end{eqnarray}
The definition of $F_{\alpha}^{2}$ \eqref{Falpha2} indicates that $F_{\alpha}^{2}=O(|z_{2}-y_{2}|)$ as $y\rightarrow z$.  Then with the help of Theorem \ref{thm:2.5}, we have $v_{\alpha}^{2}=O(|z_{2}-y_{2}|)$ as $y\rightarrow z$. Since the integral 
can be converted to an integral on $\partial D$ as \eqref{bintegral} in the proof of Theorem 4.1, we know that $R_{1}(z),R_{2}(z)\rightarrow 0$ as $z$ approaches $\partial D$. 

Thus the property of $\mathcal{I}_{U}(z)$ as $z$ approaches the boundary of $D$ is reflected in $v_{\alpha}^{1\pm}(z)$, which peak at the boundary with similar behavior as that of $Im(F^{1}_{\alpha}(z))$. 

\section{Extensions to sound-soft case}\label{sect5}
Our RTM functionals $\mathcal{I}_{L}(z)$, $\mathcal{I}_{U}(z)$ can also be applied to the case of detecting sound soft periodic array. Namely, we are given $u^{inc}_{\alpha_{n}}$ as above, and the scattered field $u^s_{\alpha_{n}}$ is given by
\begin{eqnarray*}
\Delta u^s_{\alpha_{n}} + k^2 u^s_{\alpha_{n}} = 0  \mbox{ in } \mathbb{R}^2\setminus\bar{D},\\
u^s_{\alpha_{n}}=-u^{inc}_{\alpha_n}\textit{ on $\partial D$},
\end{eqnarray*}
with $\alpha_{n}$ quasi-periodicity in $x_{1}$ direction, and satisfies the Rayleigh-expansion condition. 
We can obtain the following resolution results on the two RTM functionals for the case of sound-soft periodic scattering problem. 
\begin{theorem}
    Let the lower RTM functional be given by \eqref{RTMfuncL}, and let $\psi(y,z)$ be the solution to the following problem
    \begin{eqnarray}
        \Delta_y \psi(y,z) + k^2 \psi(y,z) = 0, \textit{ for $ y\in \mathbb{R}^{2}\setminus \bar{D}$ , $\psi(y,z) = -F^{L}_{\alpha}(y,z)$, for $y\in\partial D$},\label{ssoftl}
    \end{eqnarray}
    with the Rayleigh expansion condition
    \begin{eqnarray*}
\psi(y,z)=\left\{\begin{aligned}
\sum_{m\in\mathbb{Z}}\psi^{+}_{n}(z)e^{i\alpha_{m}y_{1}+i\beta_{m}y_{2}},y_2\in \Omega_{H}^{+},\\
\sum_{m\in\mathbb{Z}}\psi^{-}_{n}(z)e^{i\alpha_{m}y_{1}-i\beta_{m}y_{2}},y_2\in \Omega_{H}^{-}.
\end{aligned}
\right.
\end{eqnarray*}
    We have the following result
    \begin{eqnarray}
        \mathcal{I}_{L}(z)=2\Lambda^2\sum_{n\in B_{\alpha}}\beta_{n}(|\psi^{+}_{n}(z)|^{2}+|\psi^{-}_{n}(z)|^{2})+O(h^{-1})\label{SRTML1}.
    \end{eqnarray}
\end{theorem}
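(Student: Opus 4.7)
The plan is to mimic the structure of the proof of Theorem \ref{thm:4.1}, replacing the Lippmann--Schwinger volume representation of $u^s_{\alpha_n}$ (which is unavailable in the sound-soft setting) by the quasi-periodic Green boundary representation on $\partial D$. As a first step, I would apply Green's second identity to $u^s_{\alpha_n}$ and $G^{qp}_\alpha(x_r,\cdot)$ on a truncated annular domain inside a periodic cell, repeating the argument of the proof of Theorem \ref{thm:2.4}: the contributions from the top and bottom edges vanish by the spectral form of the Green's function combined with the Rayleigh expansion of $u^s_{\alpha_n}$, the left and right boundary terms cancel by quasi-periodicity, and the singularity contribution at $x_r$ yields $u^s_{\alpha_n}(x_r)$ in the limit $\rho\to 0$. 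The outcome is
\begin{equation*}
u^s_{\alpha_n}(x_r)=\int_{\partial D}\bigl[u^s_{\alpha_n}(y)\,\partial_{\nu(y)}G^{qp}_\alpha(x_r,y)-G^{qp}_\alpha(x_r,y)\,\partial_{\nu(y)}u^s_{\alpha_n}(y)\bigr]\,ds(y),
\end{equation*}
with $\nu$ outward from $D$.

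Next, I take complex conjugates, substitute into the definition \eqref{RTMfuncL}, and swap the order of integration. The inner cross-correlation $\int_{\Gamma_{-h}}\partial_{x_2}G^{qp}_\alpha(x_r,z)\overline{G^{qp}_\alpha(x_r,y)}\,ds(x_r)$ equals $-\tfrac{1}{2}F^L_\alpha(y,z)+O(h^{-1})$ by Corollary \ref{cor:2.1} together with the identity $\overline{F^L_\alpha(z,y)}=-F^L_\alpha(y,z)$. Introducing the weighted superposition
\begin{equation*}
\psi(y,z):=\sum_{n\in B_\alpha}\frac{i}{2\Lambda\beta_n}\,u^s_{\alpha_n}(y)\,\overline{u^{inc}_{\alpha_n}(z)},
\end{equation*}
linearity plus the sound-soft boundary condition $u^s_{\alpha_n}=-u^{inc}_{\alpha_n}$ on $\partial D$ show that $\psi$ is exactly the $\alpha$-quasi-periodic solution of the exterior Dirichlet problem \eqref{ssoftl}. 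Using the identity $\sum_{n\in B_\alpha}\frac{i}{\beta_n}u^{inc}_{\alpha_n}(z)\overline{u^s_{\alpha_n}(y)}=-2\Lambda\,\overline{\psi(y,z)}$ --- the analogue of the relation defining $v_\alpha^s$ in Theorem \ref{thm:4.1} --- the RTM functional rearranges into
\begin{equation*}
\mathcal{I}_L(z)=-\Lambda\,Im\int_{\partial D}\bigl[\overline{\psi}\,\partial_\nu F^L_\alpha-\partial_\nu\overline{\psi}\,F^L_\alpha\bigr]\,ds+O(h^{-1}).
\end{equation*}

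The final step uses the Dirichlet data. On $\partial D$, substituting $\overline{\psi}=-\overline{F^L_\alpha}$ in the first factor and $F^L_\alpha=-\psi$ in the second (but not in the normal derivatives, which are unrelated) rewrites the integrand as $-\overline{F^L_\alpha}\,\partial_\nu F^L_\alpha+\overline{\partial_\nu\psi}\,\psi$. Since $F^L_\alpha(\cdot,z)$ is an entire Helmholtz solution, Green's first identity over the interior $D$ yields $\int_{\partial D}\overline{F^L_\alpha}\,\partial_\nu F^L_\alpha\,ds=\int_D(|\nabla F^L_\alpha|^2-k^2|F^L_\alpha|^2)\,dy\in\mathbb{R}$, so its imaginary part drops out. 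What remains is precisely the quadratic form treated by Theorem \ref{thm:2.6} applied to $\psi(\cdot,z)\in H^1_{\alpha,qp}(\Omega\setminus\bar D)$, producing the sum of Rayleigh moduli $\Lambda\sum_{n\in B_\alpha}\beta_n(|\psi^+_n(z)|^2+|\psi^-_n(z)|^2)$ and hence \eqref{SRTML1}.

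The main obstacle is careful bookkeeping: tracking the sign from $\overline{F^L_\alpha(z,y)}=-F^L_\alpha(y,z)$, respecting that the Dirichlet substitution is legitimate only on the undifferentiated factors of the integrand, verifying via Green's identity on $D$ that the $\overline{F^L_\alpha}\partial_\nu F^L_\alpha$ term is real (and therefore drops under $Im$), and tracking the numerical constant through to the final expression. The quasi-periodicity of $\psi$ and $F^L_\alpha$ and the same $O(h^{-1})$ estimate as in Corollary \ref{cor:2.1} supply the uniform error control.
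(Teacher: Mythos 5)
Your route is the paper's route almost line for line: the quasi-periodic Green representation of $u^s_{\alpha_n}$ on $\partial D$, Corollary \ref{cor:2.1} (conjugated, with $y$ and $z$ swapped) to evaluate the cross-correlation on $\Gamma_{-h}$ as $-\tfrac12 F^L_\alpha(y,z)+O(h^{-1})$, the superposition $\psi(y,z)=\sum_{n\in B_\alpha}\tfrac{i}{2\Lambda\beta_n}u^s_{\alpha_n}(y)\overline{u^{inc}_{\alpha_n}(z)}$ identified via the sound-soft condition as the solution of \eqref{ssoftl}, and the intermediate identity
\begin{equation*}
\mathcal{I}_L(z)=-\Lambda\,Im\int_{\partial D}\Bigl[\overline{\psi}\,\partial_\nu F^L_\alpha-\overline{\partial_\nu\psi}\,F^L_\alpha\Bigr]ds+O(h^{-1}),
\end{equation*}
all of which agree with the paper. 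The issue is the final step, and it is a genuine one: a factor of $2$. Your (correct) insistence that the Dirichlet data be substituted only into the undifferentiated factors turns the integrand into $-\overline{F^L_\alpha}\partial_\nu F^L_\alpha+\overline{\partial_\nu\psi}\,\psi$; your (correct) Green's-identity argument on $D$ shows the first term has no imaginary part; Theorem \ref{thm:2.6} then yields $\mathcal{I}_L(z)=\Lambda^2\sum_{n\in B_\alpha}\beta_n(|\psi_n^+(z)|^2+|\psi_n^-(z)|^2)+O(h^{-1})$, which is \emph{half} of \eqref{SRTML1}. You nonetheless write ``and hence \eqref{SRTML1}'' without remarking on the mismatch. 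The paper gets its factor $2$ by jumping straight to $-2\Lambda\,Im\int_{\partial D}\psi\,\overline{\partial_\nu\psi}\,ds$, which amounts to also replacing $\partial_\nu F^L_\alpha$ by $-\partial_\nu\psi$ on $\partial D$ --- precisely the substitution you correctly flag as illegitimate (only the traces coincide; if the normal derivatives coincided too, $\psi+F^L_\alpha$ would vanish identically by unique continuation, contradicting the fact that $F^L_\alpha$ is not radiating at $+\infty$). So as written your argument does not establish the stated constant: you must either locate where an additional factor of $2$ enters (on your own accounting it does not --- the cross term $\overline{\psi}\,\partial_\nu F^L_\alpha$ contributes zero to the imaginary part rather than duplicating the $\overline{\partial_\nu\psi}\,\psi$ contribution) or conclude that the constant in \eqref{SRTML1} should be $\Lambda^2$. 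Either way, the discrepancy must be confronted explicitly rather than elided in the last sentence.
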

\begin{proof}
    We recall the Green's representation formula for sound-soft obstacle scattering, 
    \begin{eqnarray*}
        u^{s}_{\alpha_{n}}(x_{r})=\int_{\partial D}u^{s}_{\alpha_{n}}(y)\frac{\partial G_{\alpha}^{qp}(x_{r},y)}{\partial \nu(y)}-\frac{\partial u^{s}_{\alpha_{n}}(y)}{\partial\nu(y)}G_{\alpha}^{qp}(x_{r},y)ds(y).
    \end{eqnarray*}
    Thus, using Corollary \ref{cor:2.1}, we obtain
    \begin{eqnarray}
        &\int_{\Gamma_{-h}}&\frac{\partial G(x_{r},z)}{\partial x_{2}}\overline{u^{s}_{\alpha_{n}}(x_{r})}ds(x_{1})\nonumber\\
        &=&\int_{\partial D}\overline{u^{s}_{\alpha_{n}}(y)}\frac{\partial(-\frac{1}{2}F^{L}_{\alpha}(y,z)+\overline{R^{L}_{\alpha}(z,y;h)})}{\partial\nu(y)}\nonumber\\
        &-&\overline{\frac{\partial u^{s}_{\alpha_{n}}(y)}{\partial\nu(y)}}(-\frac{1}{2}F^{L}_{\alpha}(y,z)+\overline{R^{L}_{\alpha}(z,y;h)})ds(y)\nonumber\\
        &=&-\frac{1}{2}\int_{\partial D}\overline{u^{s}_{\alpha_{n}}(y)}\frac{\partial F_{\alpha}^{L}(y,z)}{\partial \nu(y)}-\overline{\frac{\partial u^{s}_{\alpha_{n}}(y)}{\partial\nu(y)}}F_{\alpha}^{L}(y,z)ds(y)+R_{I}(z;h).\nonumber
    \end{eqnarray}
    Here
    \begin{eqnarray*}
        R_{I}(z;h)=\int_{\partial D}\overline{u^{s}_{\alpha_{n}}(y)}\frac{\partial \overline{R^{L}_{\alpha}(z,y;h)}}{\partial \nu(y)}-\overline{\frac{\partial u^{s}_{\alpha_{n}}(y)}{\partial\nu(y)}}\overline{R^{L}_{\alpha}(z,y;h)}ds(y).
\end{eqnarray*}Using corollary \ref{cor:2.1} once again, we obtain that $|R_{I}(z;h)|=O(h^{-1}),h\rightarrow\infty$. Eventually, we obtain that,
    \begin{eqnarray*}
        \mathcal{I}_{L}(z)=-\Lambda Im\int_{\partial D}\overline{\psi(y,z)}\frac{\partial F_{\alpha}^{L}(y,z)}{\partial \nu(y)}-\frac{\overline{\partial\psi(y,z)}}{\partial\nu(y)}F_{\alpha}^{L}(y,z)ds(y)+O(h^{-1}),    \end{eqnarray*}
    where $\psi(y,z)$ is given by \eqref{ssoftl}. Using the sound-soft boundary condition, we obtain that
    \begin{eqnarray}
      \mathcal{I}_{L}(z)=-2\Lambda Im\int_{\partial D}\psi(y,z)\overline{\frac{\partial \psi(y,z)}{\partial \nu(y)}}ds(y)+O(h^{-1}).
    \end{eqnarray}
    Now, using theorem \ref{thm:2.6}, we obtain the desired resolution analysis.
\end{proof}
Using similar technique, we may obtain the analysis for $I_{U}(z)$.
\begin{theorem}
    Let the upper RTM functional be given by \eqref{RTMfunc}, and let $\psi(y,z)$ be the solution to the following problem
    \begin{eqnarray}
        \Delta_y \psi(y,z) + k^2 \psi(y,z) = 0, \textit{ for $ y\in \mathbb{R}^{2}\setminus \bar{D}$ , $\psi(y,z) = -F^{L}_{\alpha}(y,z)$, for $y\in\partial D$}.
    \end{eqnarray}
    We have the following result
    \begin{eqnarray}
        \mathcal{I}_{U}(z)=-\Lambda Im\int_{\partial D}\overline{\psi(y,z)}\frac{\partial F^{U}_{\alpha}(y,z)}{\partial v(y)}-F^{U}_{\alpha}(y,z)\frac{\partial\overline{\psi(y,z)} }{\partial v(y)}dy+O(h^{-1}).\label{SRTMU1}
    \end{eqnarray}
\end{theorem}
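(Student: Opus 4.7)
The plan is to mirror the proof of the lower-side sound-soft theorem, substituting the upper Helmholtz-Kirchhoff identity (Theorem \ref{thm:2.2} and its corollary) in place of the lower one. First I would use the Green's representation formula for the sound-soft scattered field,
\begin{eqnarray*}
u^{s}_{\alpha_{n}}(x_{r})=\int_{\partial D}u^{s}_{\alpha_{n}}(y)\frac{\partial G_{\alpha}^{qp}(x_{r},y)}{\partial \nu(y)}-\frac{\partial u^{s}_{\alpha_{n}}(y)}{\partial\nu(y)}G_{\alpha}^{qp}(x_{r},y)ds(y),
\end{eqnarray*}
to rewrite $\overline{u^{s}_{\alpha_{n}}(x_{r})}$ in \eqref{RTMfunc} as a boundary integral over $\partial D$. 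Interchanging the order of integration produces an inner integral of the form $\int_{\Gamma_{h}}\frac{\partial G^{qp}_{\alpha}(x_{r},z)}{\partial x_{2}}\overline{G^{qp}_{\alpha}(x_{r},y)}ds(x_{r})$, which, after swapping $y\leftrightarrow z$ in the corollary of Theorem \ref{thm:2.2} and taking complex conjugates, is seen to equal $\tfrac{1}{2}F^{U}_{\alpha}(y,z)$ plus an $O(h^{-1})$ remainder; here one uses the algebraic identity $\overline{F^{U}_{\alpha}(z,y)}=-F^{U}_{\alpha}(y,z)$. An analogous asymptotic, also with $O(h^{-1})$ remainder, holds after $\partial_{\nu(y)}$ differentiation, by the gradient estimate on $R^{U}_{\alpha}$.

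Next, I would package the propagating-mode sum by introducing the auxiliary function
\begin{eqnarray*}
\Psi(y,z)=\sum_{n\in B_{\alpha}}\frac{i}{2\Lambda\beta_{n}}u^{s}_{\alpha_{n}}(y)\overline{u^{inc}_{\alpha_{n}}(z)}.
\end{eqnarray*}
Since each $u^{s}_{\alpha_{n}}$ solves the homogeneous Helmholtz equation in $\mathbb{R}^{2}\setminus\bar{D}$ and satisfies the Rayleigh expansion condition, so does $\Psi$. On $\partial D$ the sound-soft condition gives $u^{s}_{\alpha_{n}}=-u^{inc}_{\alpha_{n}}$, and a direct computation using $u^{inc}_{\alpha_{n}}(y)\overline{u^{inc}_{\alpha_{n}}(z)}=e^{i\alpha_{n}(y_{1}-z_{1})-i\beta_{n}(y_{2}-z_{2})}$ (valid since $\beta_{n}$ is real for $n\in B_{\alpha}$) shows $\Psi|_{\partial D}=-F^{L}_{\alpha}(\cdot,z)$. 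Uniqueness of the sound-soft scattering problem then identifies $\Psi$ with $\psi$, so that
\begin{eqnarray*}
\sum_{n\in B_{\alpha}}\frac{i}{\beta_{n}}u^{inc}_{\alpha_{n}}(z)\overline{u^{s}_{\alpha_{n}}(y)}=-2\Lambda\,\overline{\psi(y,z)},
\end{eqnarray*}
with a parallel formula for the normal derivative.

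Inserting these two identities into the boundary-integral representation obtained in the first step collapses the sum over $n$ and produces exactly \eqref{SRTMU1}. The main obstacle, as in the lower case, is the careful bookkeeping of complex conjugations, reciprocity swaps $(y,z)\leftrightarrow(z,y)$, and the crucial sign flip between $F^{U}_{\alpha}$ and $F^{L}_{\alpha}$ which distinguishes the upper analysis from the lower one; the uniform $O(h^{-1})$ control of the remainder after Fubini is justified by the $C^{1}$ decay of $R^{U}_{\alpha}$ in the corollary of Theorem \ref{thm:2.2} combined with smoothness and boundedness of $\partial D$. Note that, in contrast to the lower case, we do not attempt to further simplify the boundary integral via Theorem \ref{thm:2.6}, because the kernel $F^{U}_{\alpha}(y,z)$ does not coincide with the Dirichlet data $-F^{L}_{\alpha}(y,z)$ of $\psi$; the resolution is therefore stated directly in the integral form \eqref{SRTMU1}.
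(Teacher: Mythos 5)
Your proposal is correct and follows exactly the route the paper intends: the paper omits this proof with the remark that it uses the ``similar technique'' as the lower-side sound-soft theorem, and your argument (Green's representation, the upper Helmholtz--Kirchhoff corollary with the conjugation/reciprocity identity $\overline{F^{U}_{\alpha}(z,y)}=-F^{U}_{\alpha}(y,z)$, and identification of the propagating-mode sum with $-2\Lambda\overline{\psi}$ via the sound-soft boundary condition) is precisely that technique, with all signs checking out. Your closing observation --- that Theorem \ref{thm:2.6} cannot be invoked here because the kernel $F^{U}_{\alpha}$ does not match the Dirichlet data $-F^{L}_{\alpha}$ of $\psi$, which is why the result is left in integral form --- is also the correct reading of why this theorem is stated differently from the lower one.
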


\section{Numerical Result}\label{sect6}
In this section, we test several cases of the periodic scattering objects to demonstrate the imaging ability of our imaging functionals $\mathcal{I}_L(z),\mathcal{I}_U(z)$.

The probing area of our numerical experiment is \begin{eqnarray*}
    \Omega_{0}=\{(z_{1},z_{2})||z_{1}|\leq -\frac{\Lambda}{2},|z_{2}|\leq \frac{\Lambda}{2}\}.
\end{eqnarray*} 
In the following experiments, we choose $\Lambda=2\pi$. The probing area is discretized by $101\times 101$ equally distributed points, and the number of receiver on $\Gamma_{\pm h}$, where $h=7$, 
 is $N_{r}=101$. Since the structure of our RTM functionals have periodicity in the $z_{1}$ direction  with period $\Lambda$, the reconstruction in this single period reflects the reconstruction for the periodic array. 
To get the synthetic data of the quasi-periodic scattered wave, we use the MPSPACK 
 based on a modified Nystr\"om method  proposed by \cite{Barnett}. For the calculation of quasi-periodic Green's function explicitly used in the indicator function, we follow the Ewald's method and the procedure introduced in \cite{Ammari} to obtain a fast simulation.

 The refractive index of our numerical experiment for penetrable obstacle is $\gamma(x)=1.5$. The boundaries of the obstacles that are used in our numerical experiments are listed below, where $t\in[0,2\pi)$, $\rho>0$.
 \begin{itemize}
     \item Circle: $z_{1} = \rho \cos(t)$, $z_{2}=\rho \sin(t)$.
     \item Kite: $z_{1} =\rho(1.1\cos(t)+0.625\cos(2t)-0.625)$, $z_{2}=\rho(1.5\sin(t))$.
     \item Peanut: $z_{1} = \cos(t)+\rho \cos(3t)$, $z_{2} = \sin(t)+\rho\sin(3t)$.
 \end{itemize}
We remark that $\alpha = k\cos\theta$ and $\theta$ is the incident angle. 
In our numerical examples, the incoming angles are chosen as $\theta = \frac{\pi}{2}+m\frac{\pi}{16}, m\in \mathbb{Z}$. 
In Example 1 and 2, for Figure \ref{fig3} to Figure \ref{fig6}, $(a)$ corresponds to $\theta=\frac{\pi}{2}$ and $m=0$, that is $\alpha = 0$, which is the vertical incident direction.  $(b)$ corresponds to the average of sum for imaging functionals of 5 different incident angles with $m=0, \pm 1, \pm 2$, while $(c)$ corresponds to the average of sum for imaging functionals of 9 different incident angles with $m=0, \pm 1, \pm 2, \pm 3, \pm 4$.
 
\paragraph{Example 1}\label{ex1}
 In this example, we consider the imaging of penetrable periodic circles with radius $\rho=0.8$ at $k = 5.2\pi$ by our RTM functionals. Figure \ref{fig3} shows the imaging quality of $\mathcal{I}_{L}$, which demonstrates that the imaging functional has positive values and peaks at the boundary of the scatterer. Figure \ref{fig4} shows the imaging results of $\mathcal{I}_{U}$. It is clear from the pictures that one can get better imaging result as the number of $\alpha$ increases. We remark that the imaging result of $\mathcal{I}_{L}$ is sharp especially for the vertical part of the boundary, while for $\mathcal{I}_{U}$, the horizontal part of the circle is imaged clearly.

\begin{figure}[ht]
\subfloat[\label{fig3:a}one $\alpha$]{\includegraphics[width=5cm]{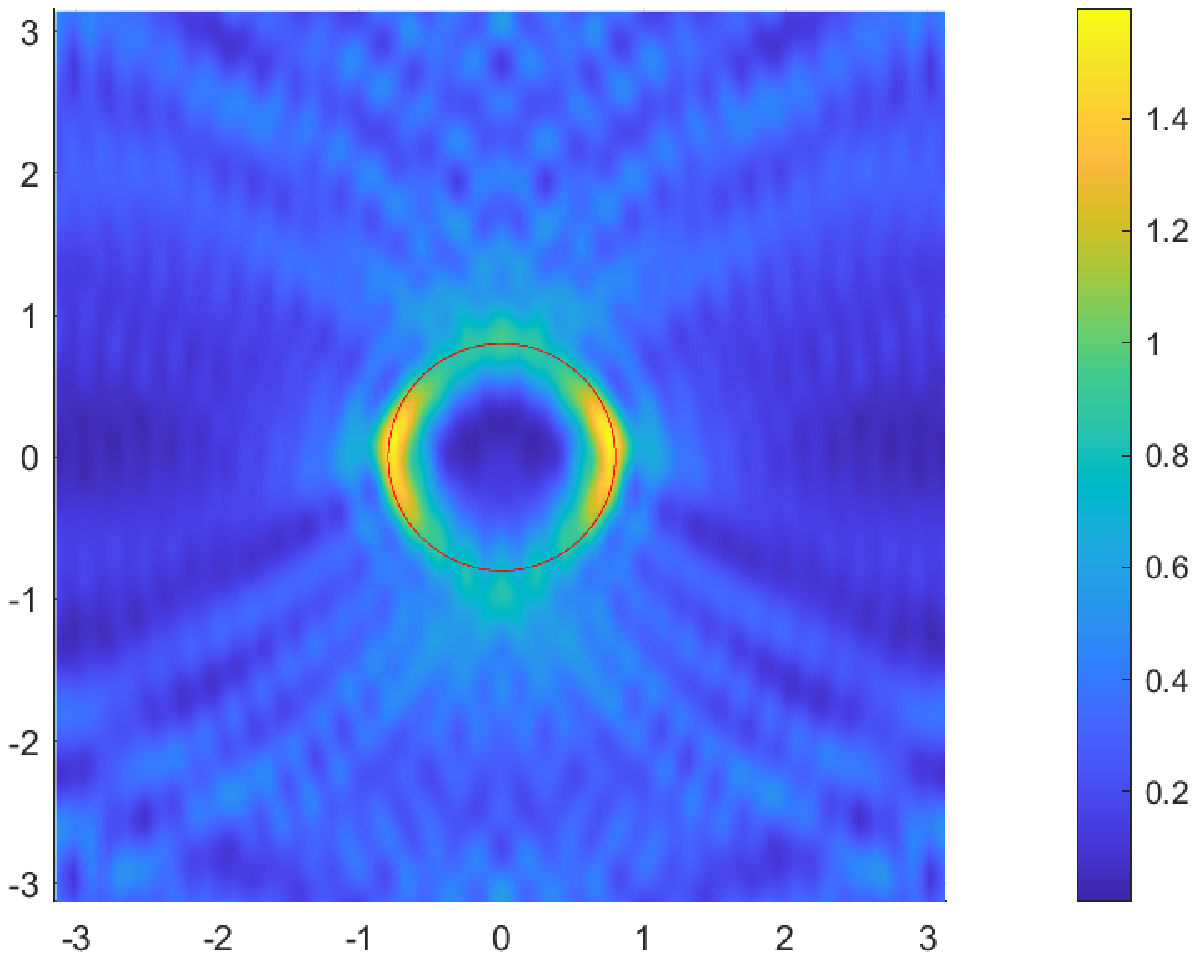}}
\subfloat[\label{fig3:b}five $\alpha$s]{\includegraphics[width=5cm]{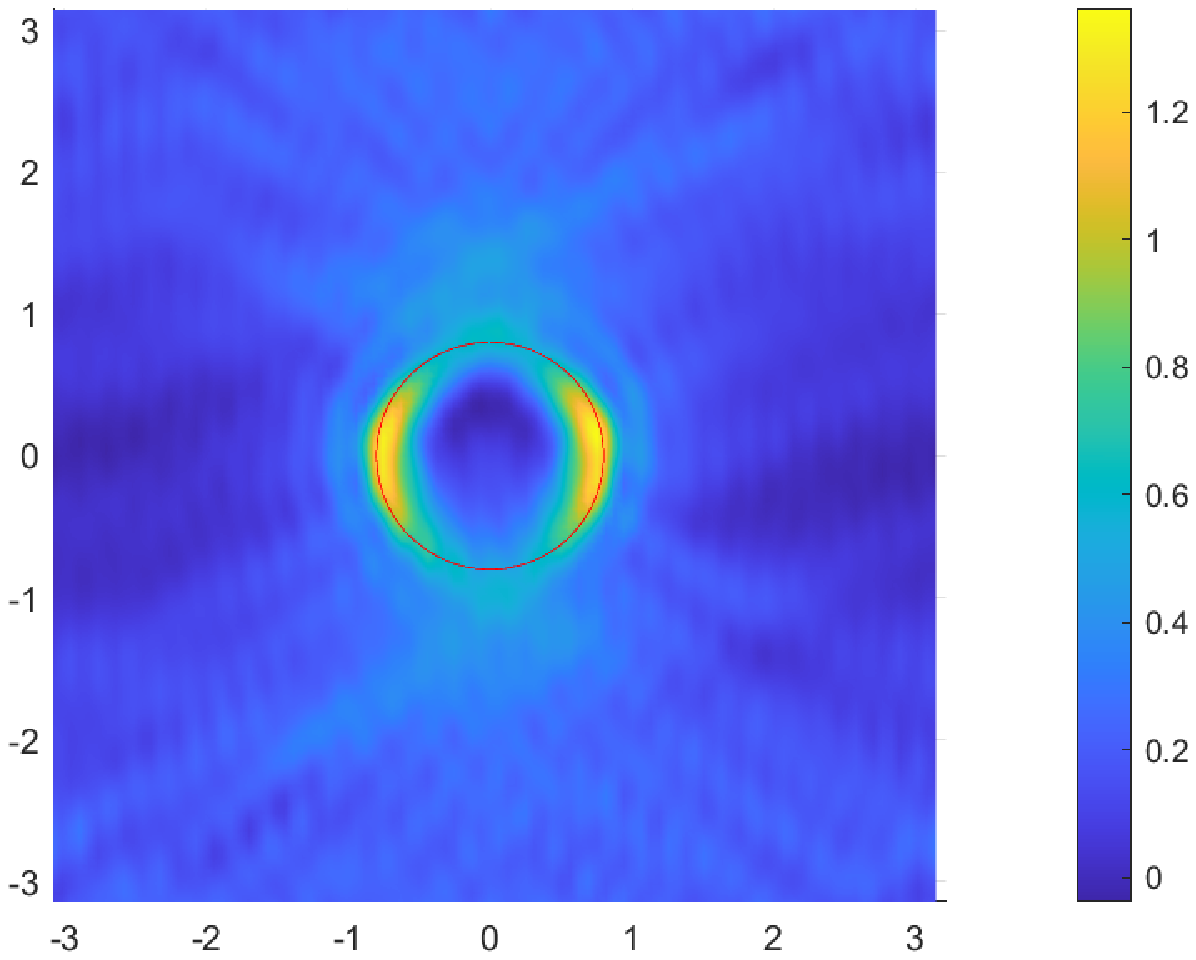}}
\subfloat[\label{fig3:c}nine $\alpha$s]{\includegraphics[width=5cm]{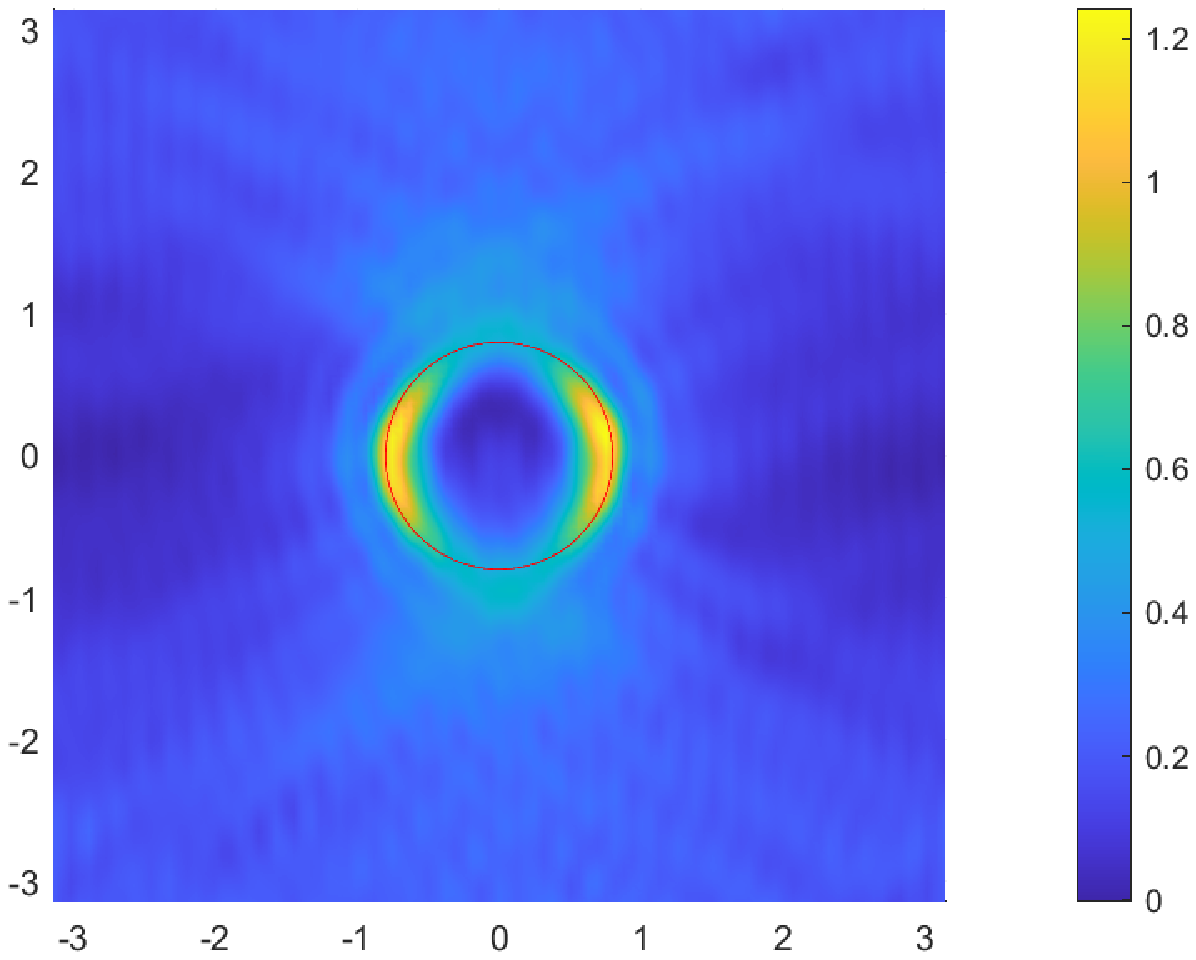}}
\caption{Reconstruction by lower RTM for penetrable circle.}\label{fig3}
\end{figure}
\begin{figure}
\subfloat[\label{fig4:a}one $\alpha$]{\includegraphics[width=5cm]{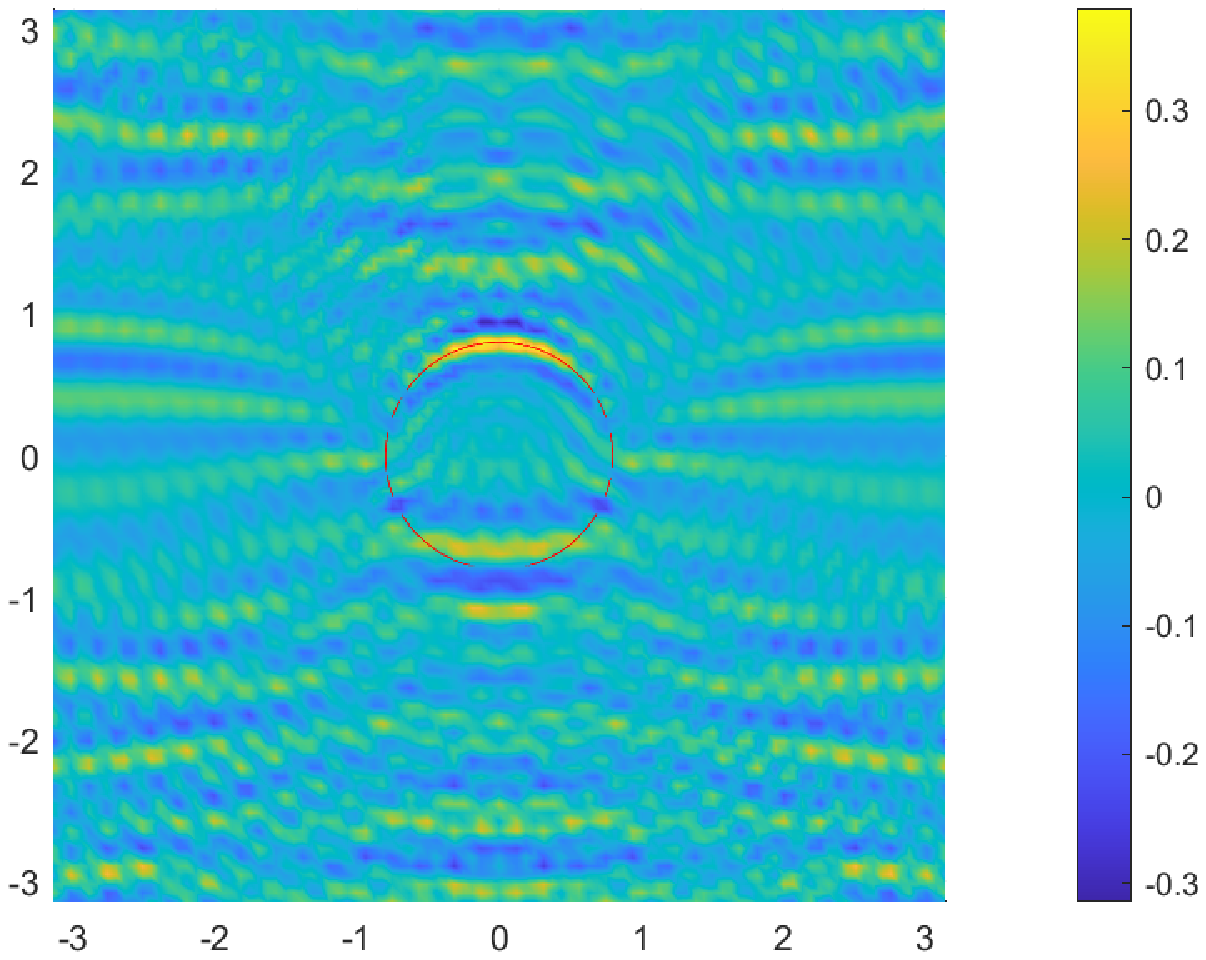}}
\subfloat[\label{fig4:b}five $\alpha$s]{\includegraphics[width=5cm]{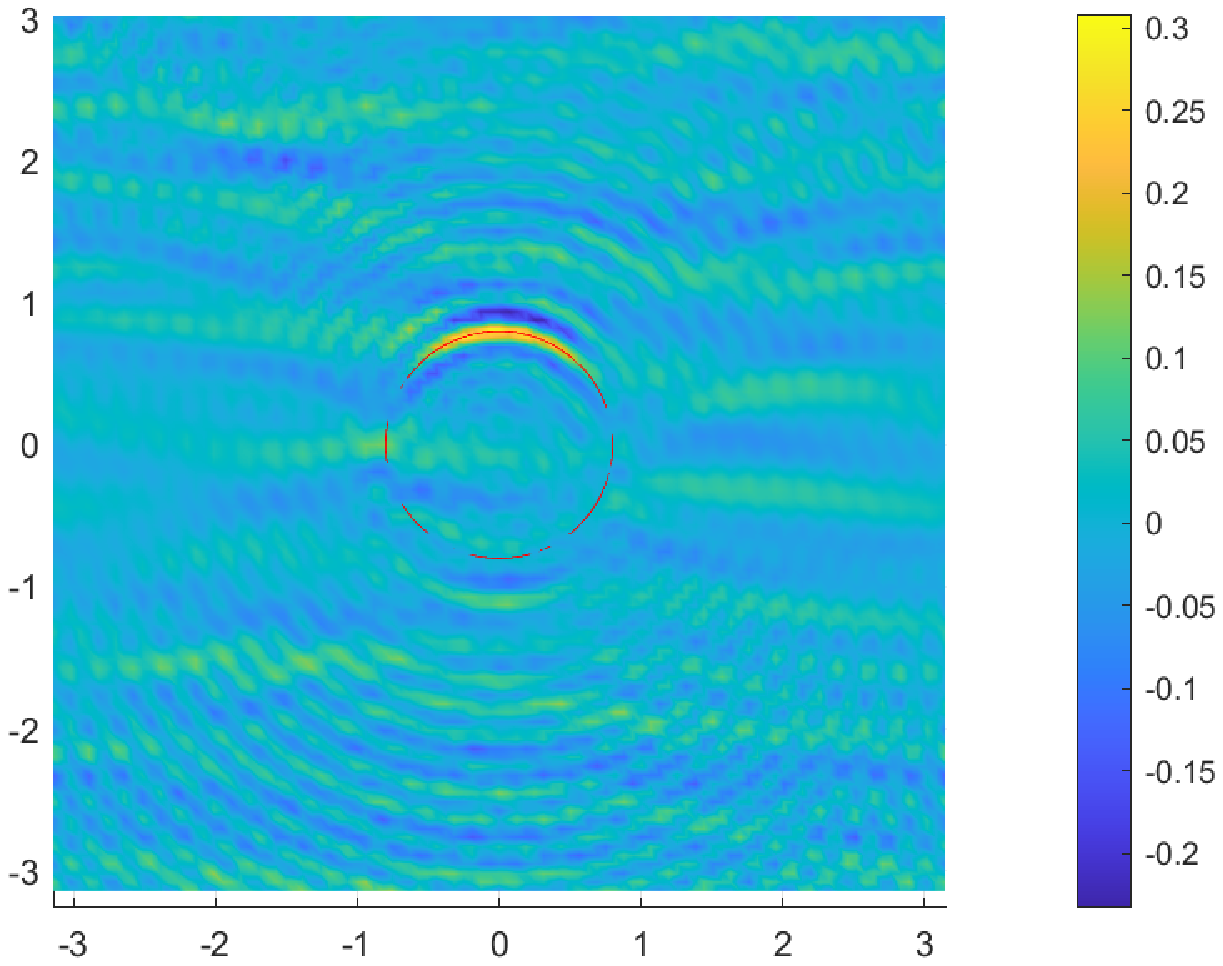}}
\subfloat[\label{fig4:c}nine $\alpha$s]{\includegraphics[width=5cm]{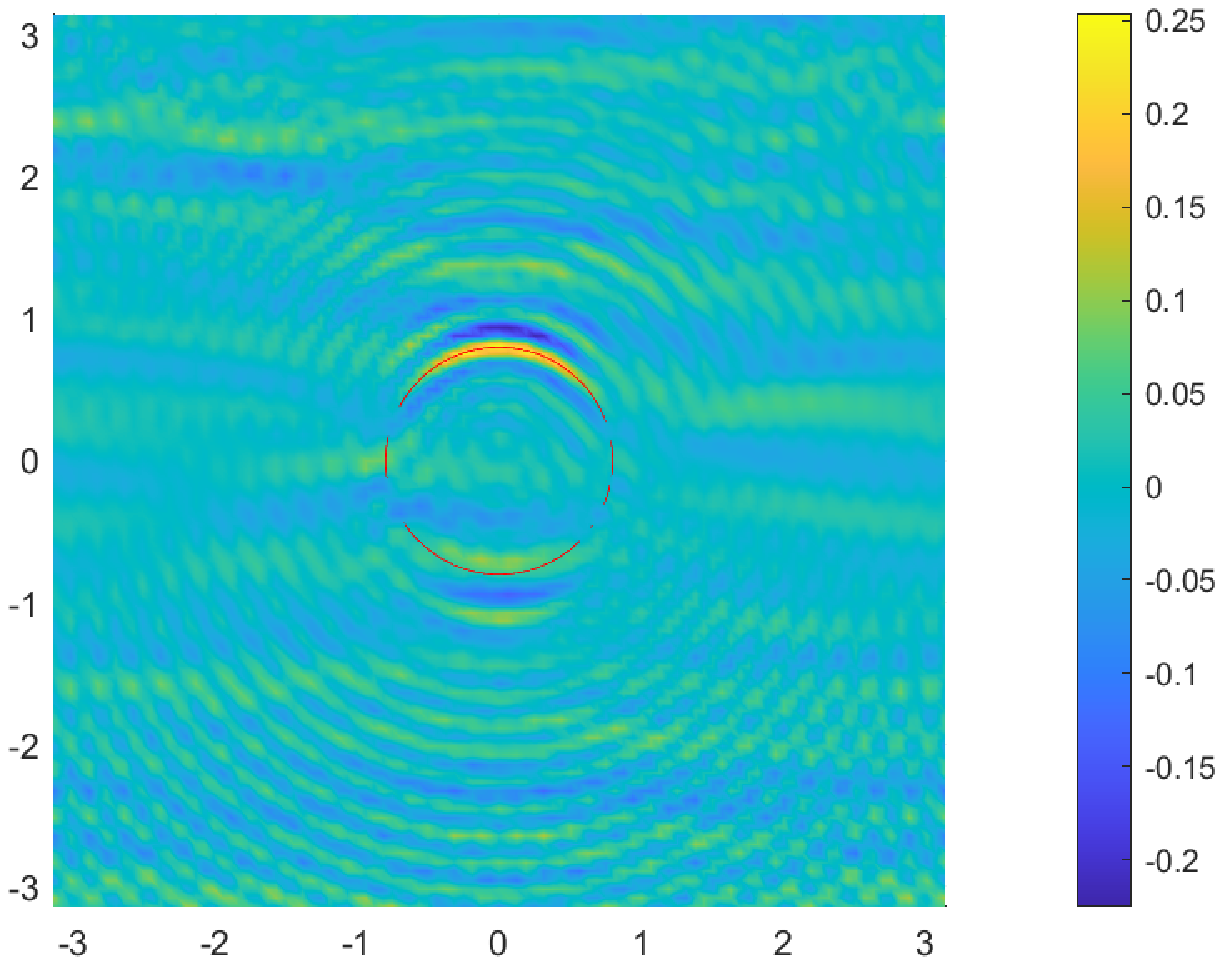}}
\caption{Reconstruction by upper RTM for penetrable circle.}\label{fig4}
\end{figure}
\paragraph{Example 2}
In this example, we consider the imaging of sound-soft periodic kite arrays. Here, $\rho=0.6$, and $k = 4.68\pi$. Figure \ref{fig5} shows the imaging results of $\mathcal{I}_{L}$, which has positive values and captures the non-convexity of the vertical part clearly, and with enough $\alpha$s, even the upper part of the obstacle array is obtained. This confirms our resolution analysis for lower RTM method \eqref{SRTML1}.
 Figure \ref{fig6} shows the imaging results of $\mathcal{I}_{U}$. We can find that the upper horizontal part of the sound-soft periodic kite can be reconstructed clearly. With more $\alpha$s, the imaging quality is also sharper with fewer false images.
 \begin{figure}[ht] 
\subfloat[\label{fig5:a}one $\alpha$]{\includegraphics[width=5cm]{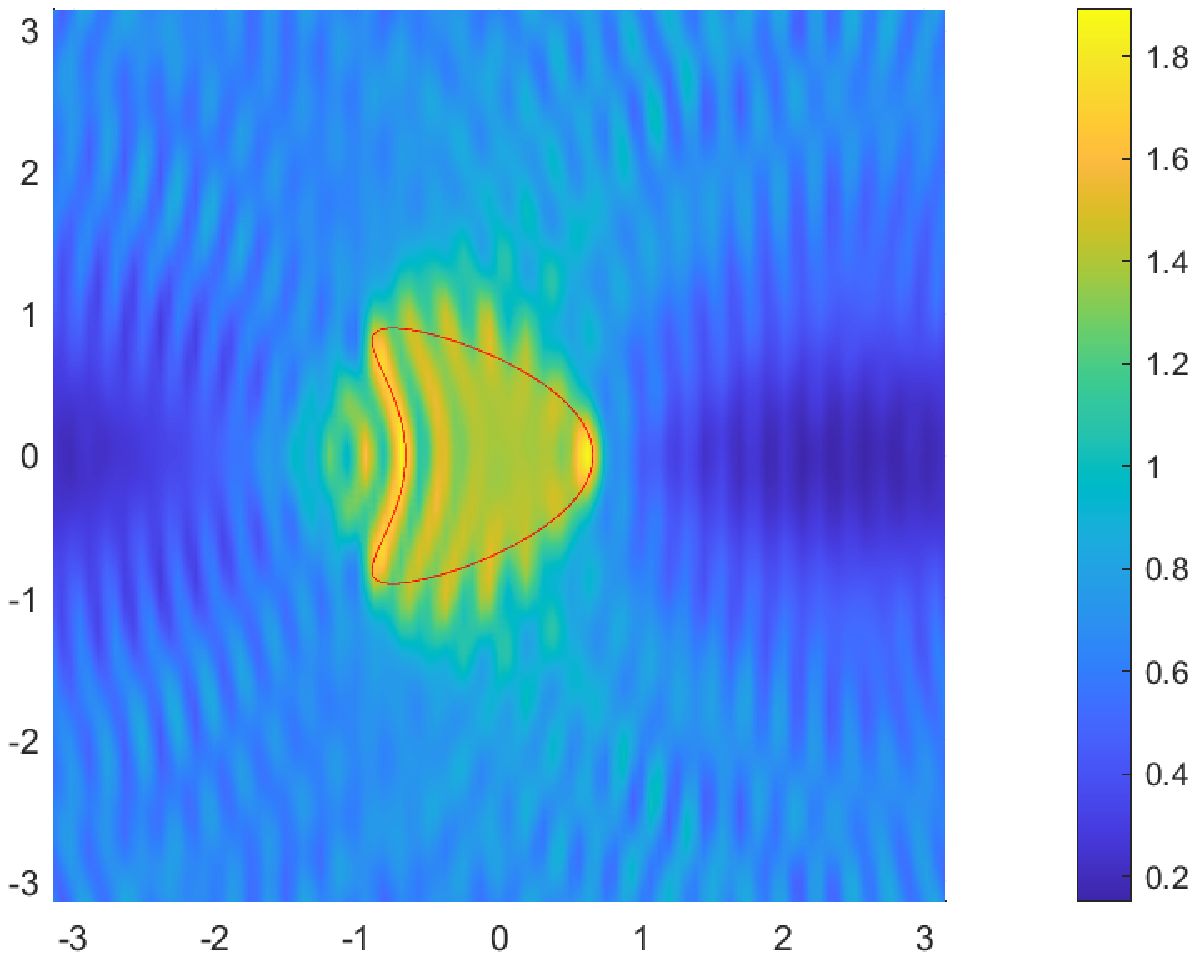}}
\subfloat[\label{fig5:b}five $\alpha$s]{\includegraphics[width=5cm]{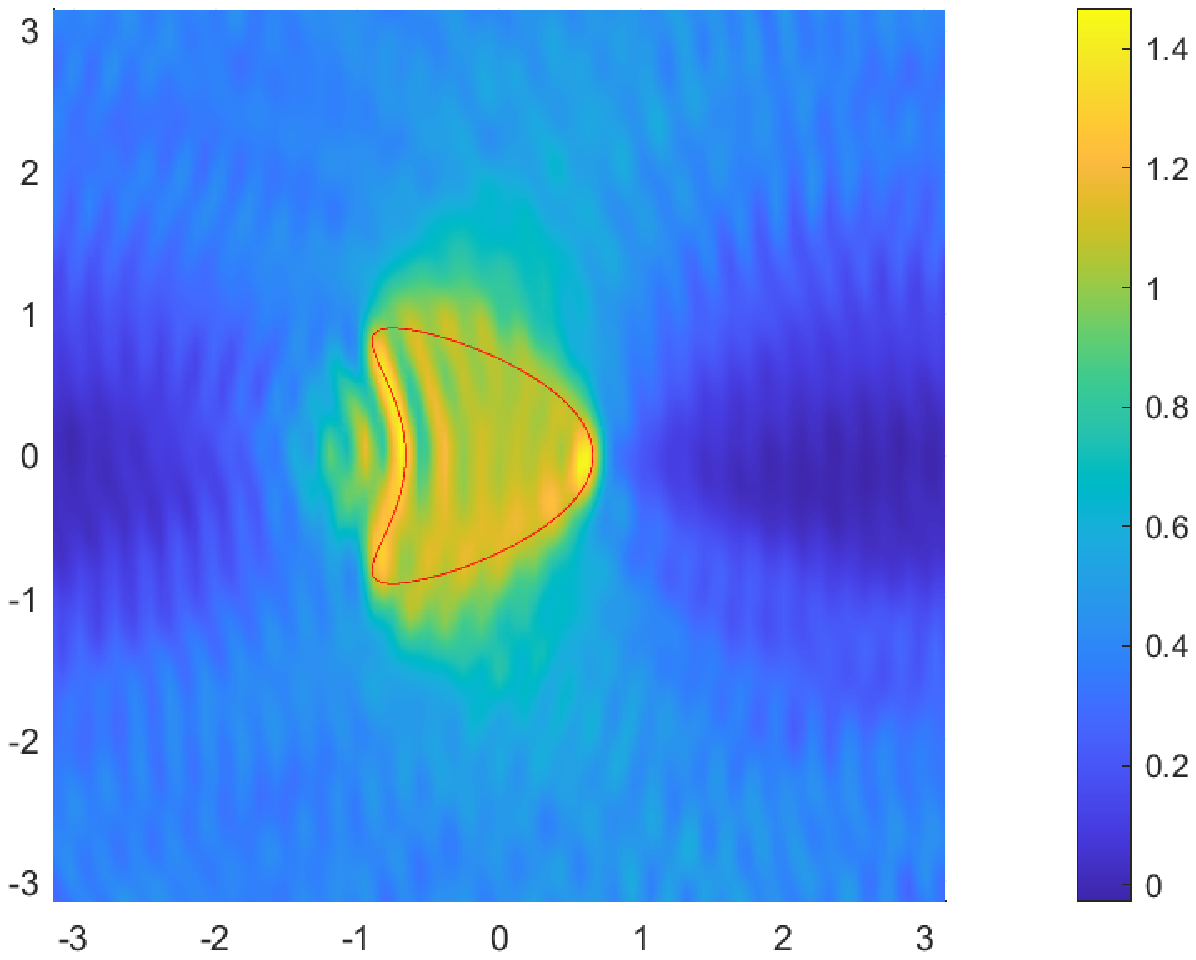}}
\subfloat[\label{fig5:c}nine $\alpha$s]{\includegraphics[width=5cm]{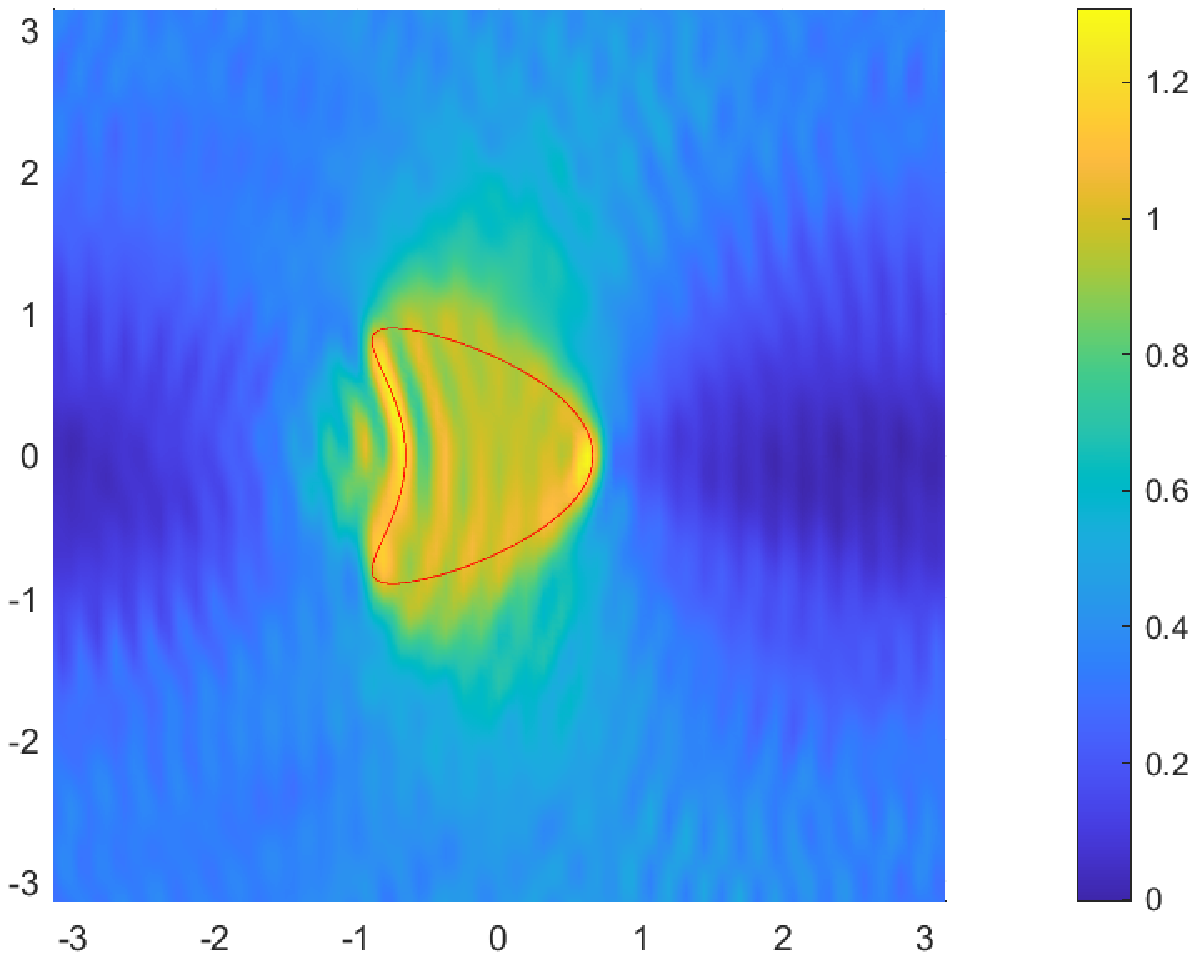}}
\caption{Reconstruction by lower RTM for sound-soft kite.}\label{fig5}
\end{figure}
\begin{figure}
\subfloat[\label{fig6:a}one $\alpha$]{\includegraphics[width=5cm]{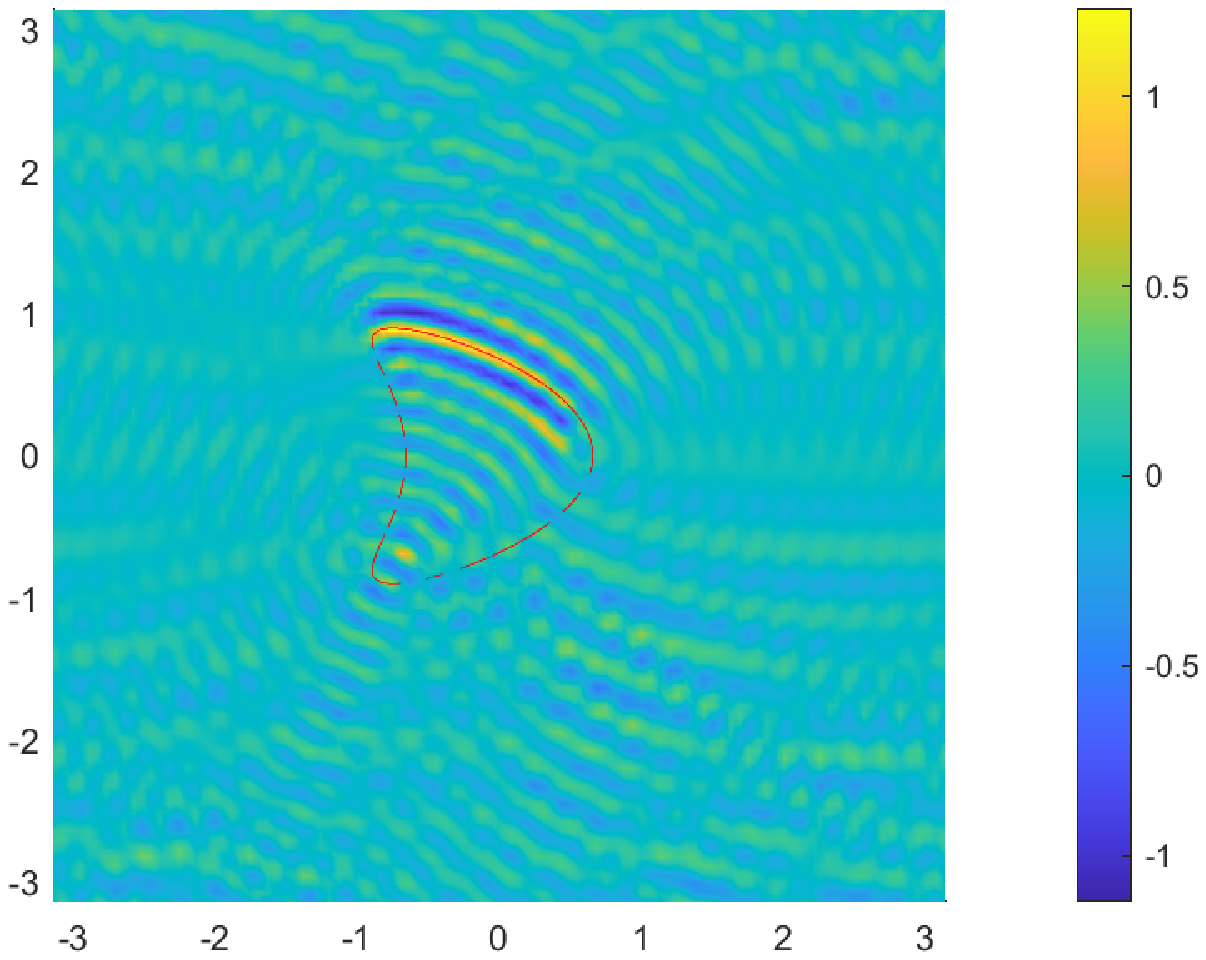}}
\subfloat[\label{fig6:b}five $\alpha$s]{\includegraphics[width=5cm]{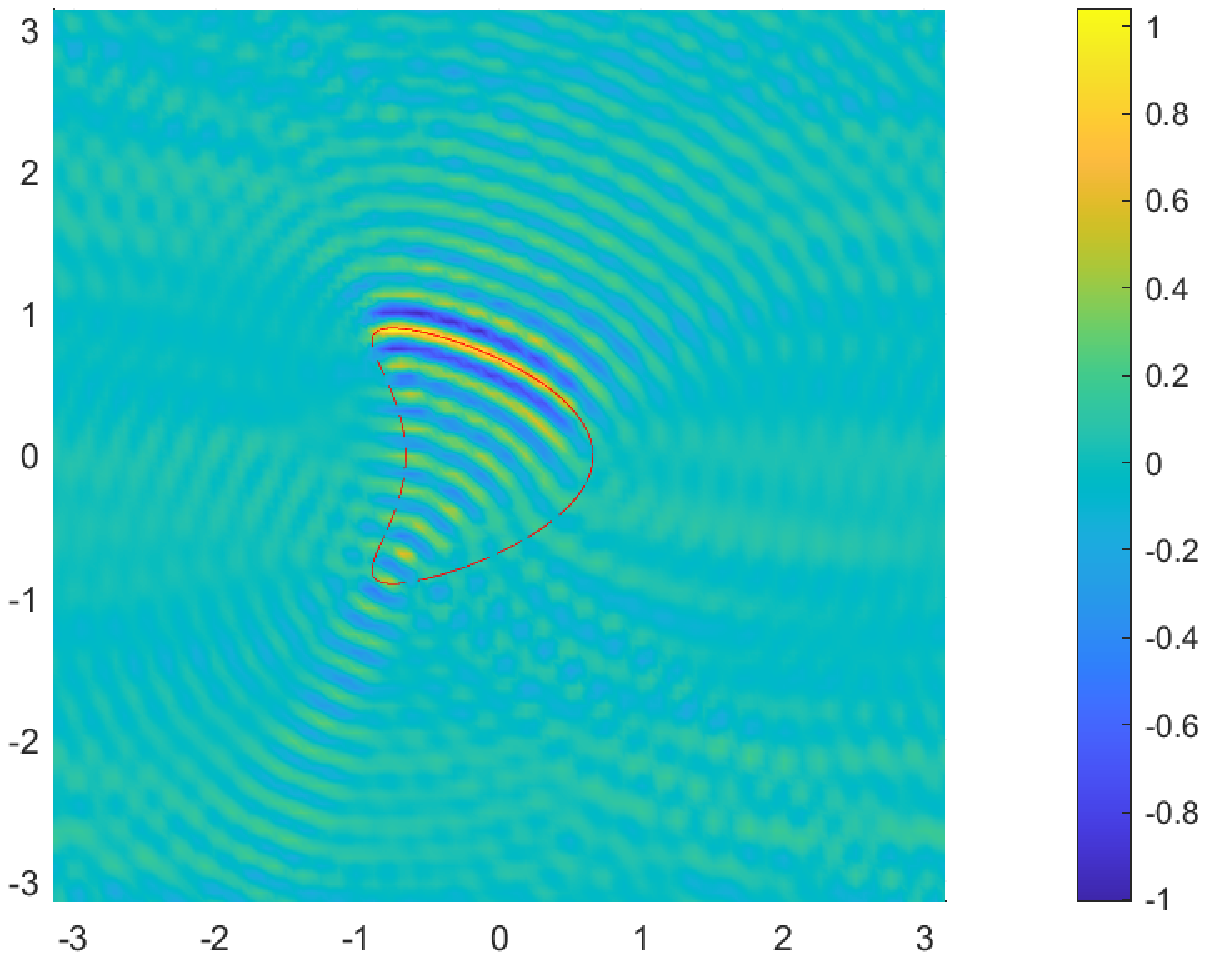}}
\subfloat[\label{fig6:c}nine $\alpha$s]{\includegraphics[width=5cm]{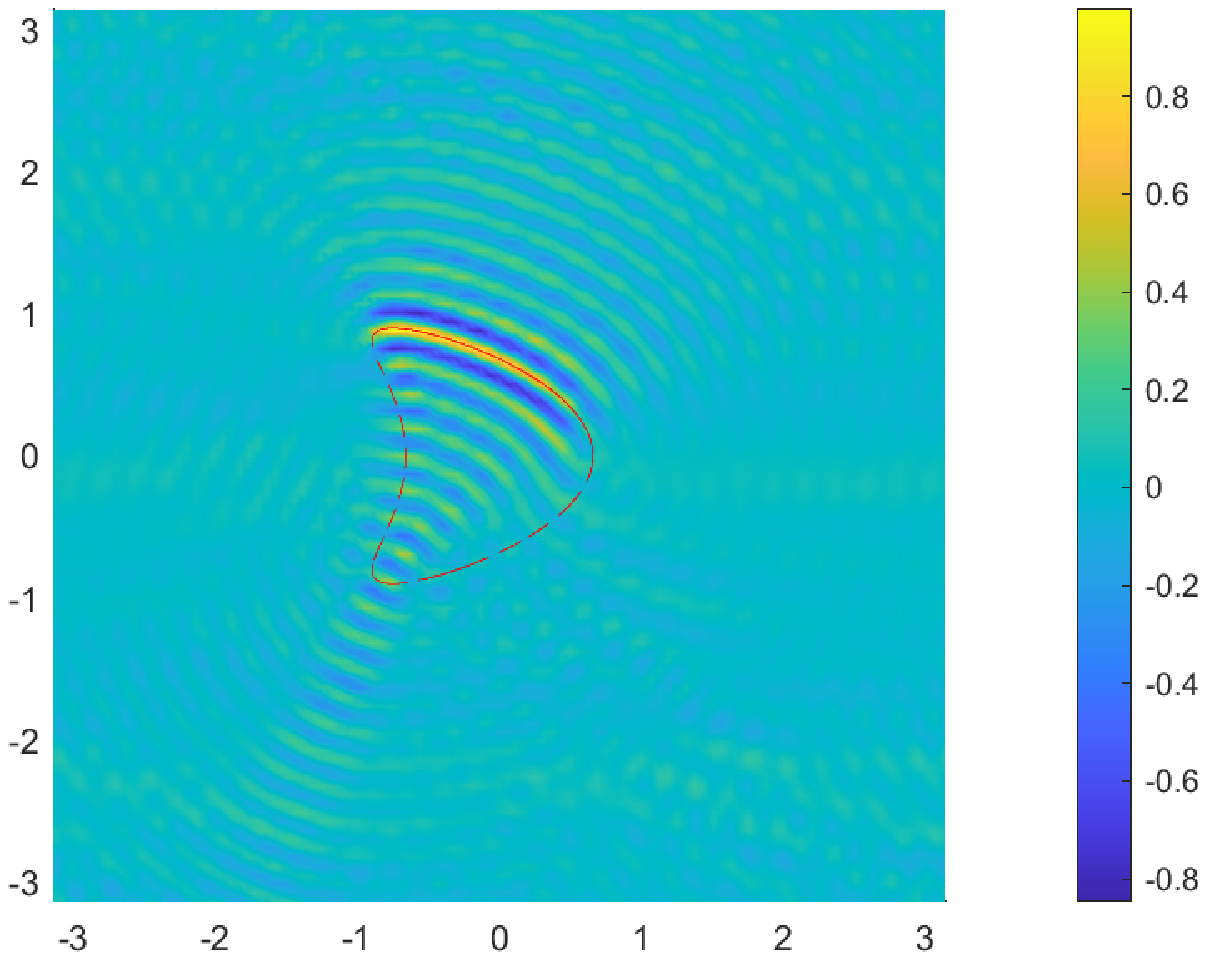}}
\caption{Reconstruction by upper RTM for sound-soft kite.}\label{fig6}
\end{figure}
\paragraph{Example 3}
In this example, we consider the stability of our RTM functionals with respect to the complex additive Gaussian random noise as in \cite{Chen1} on the peanut like scatterer with $\rho=0.2$ at $k = 4.2\pi$. Since there are $N_{r}$ measured data on $\Gamma_{h}$ (or $\Gamma_{-h}$) for any $n\in B_\alpha$,  the received data form an $N_{r}\times |B_{\alpha}|$ matrix for each $\alpha$, we name it $U^{s}_{\alpha}$, thus we introduce
the additive Gaussian noise as follows,
\begin{eqnarray*}
U^{s}_{\alpha,noise}=U^{s}_{\alpha}+V^{noise}_{\alpha},
\end{eqnarray*}
$V^{noise}_{\alpha}$ is the gaussian noise of mean zero with standard deviation of $\mu$ multiplied by the maximum of the data $|U^{s}_{\alpha}|$
\begin{eqnarray*}
U^{noise}_{\alpha}=\frac{\mu \cdot max|U^{s}_{\alpha}|}{\sqrt{2}}(\epsilon_{1}+i\epsilon_{2}).
\end{eqnarray*}
Here,
$\epsilon_{j}\sim\mathcal{N}(0,1)$ for the real ($j = 1$) and imaginary part
($j = 2$)). The noise level is calculated as $|V^{noise}_{\alpha}|^{2}_{l^{2}}=\frac{1}{N_{r}|B_{\alpha}|}\sum |V^{noise}_{\alpha_{n}}(x_{r})|^{2}$, and $\sigma=\mu\cdot max|U^{s}_{\alpha}|$, while the received data level is calculated as $|U^{s}_{\alpha}|^{2}_{l^{2}}=\frac{1}{N_{r}|B_{\alpha}|}\sum |U^{s}_{\alpha_{n}}(x_{r})|^{2}$ for each $\alpha$, and is taken arithmetic mean over all 9 $\alpha$s. The result are listed below in Table \ref{table1} and Table \ref{table2}.

\begin{table}[htbp]
    \centering
    \begin{tabular}{c|c|c|c}
    \hline\hline\noalign{\smallskip}
    $\mu$ &  $\sigma$ & $|U^{s}_{\alpha}|_{l^{2}}$
    &$|V^{noise}_{\alpha}|_{l^2}$\\
    \noalign{\smallskip}\hline\noalign{\smallskip}
0.100000  &  0.255456 &  0.347392  &  0.084532  \\
0.200000  &  0.510912 &  0.347392  &  0.170810  \\
0.400000  &  1.021823 &  0.347392  &  0.341842  \\
0.600000  &  1.532735 &  0.347392  &  0.508574  \\
\noalign{\smallskip}\hline
    \end{tabular}
    \caption{Different levels of average signal and noises for lower RTM averaging over 9 different $\alpha$s.}
    \label{table1}
\end{table}
\begin{figure}[ht] 
\subfloat[\label{fig7:a}10 $\%$ noise]{\includegraphics[width=4cm]{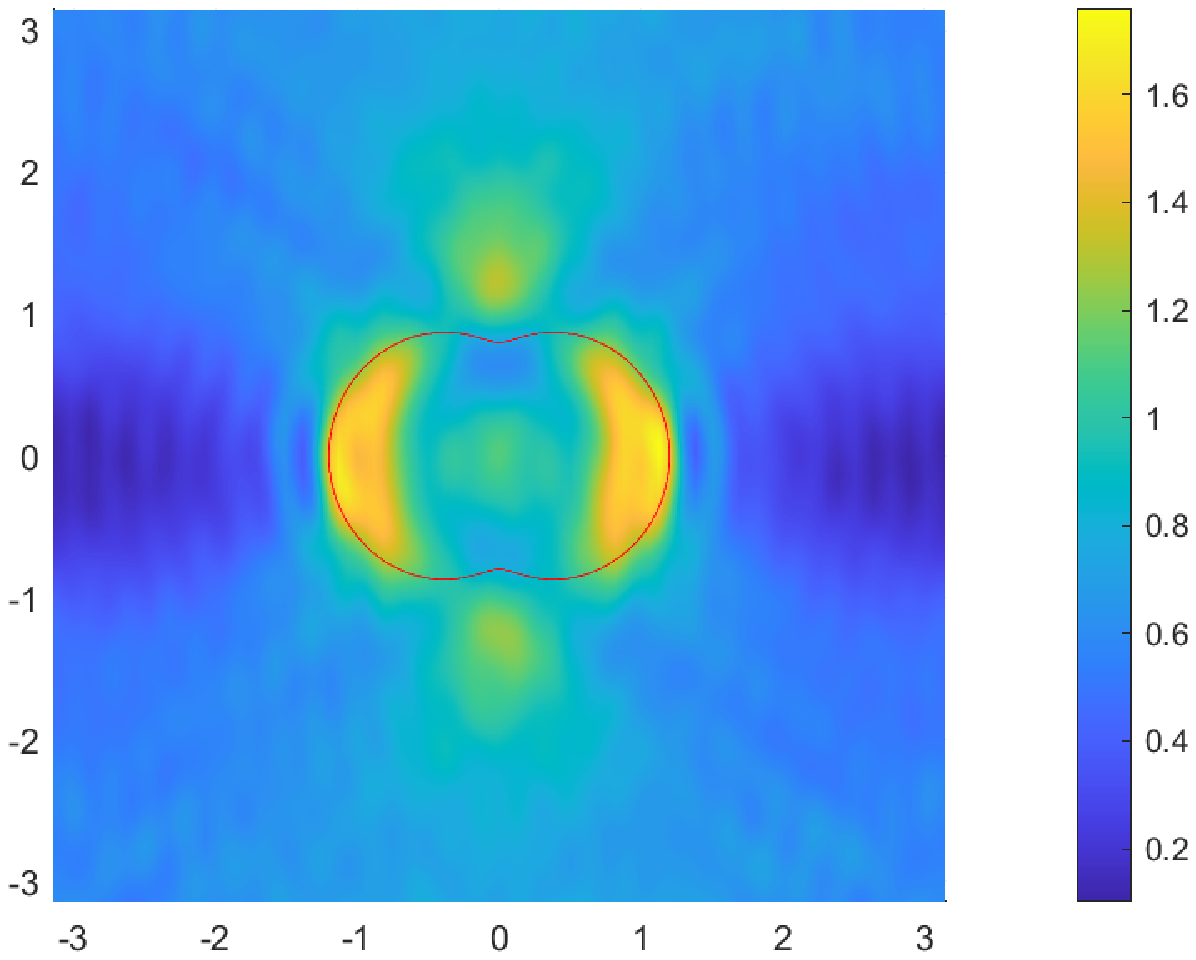}}
\subfloat[\label{fig7:b}20 $\%$ noise]{\includegraphics[width=4cm]{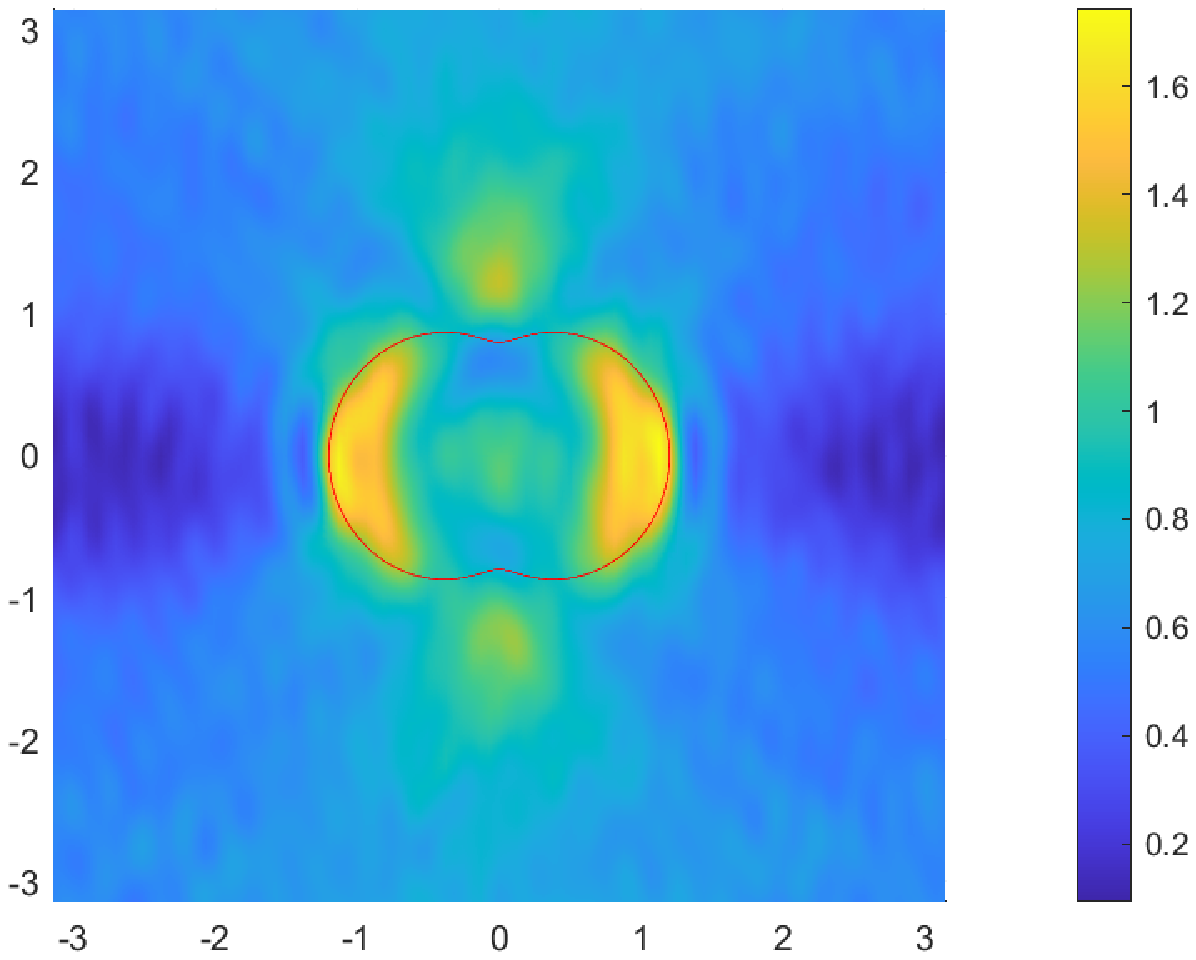}}
\subfloat[\label{fig7:c}40 $\%$ noise]{\includegraphics[width=4cm]{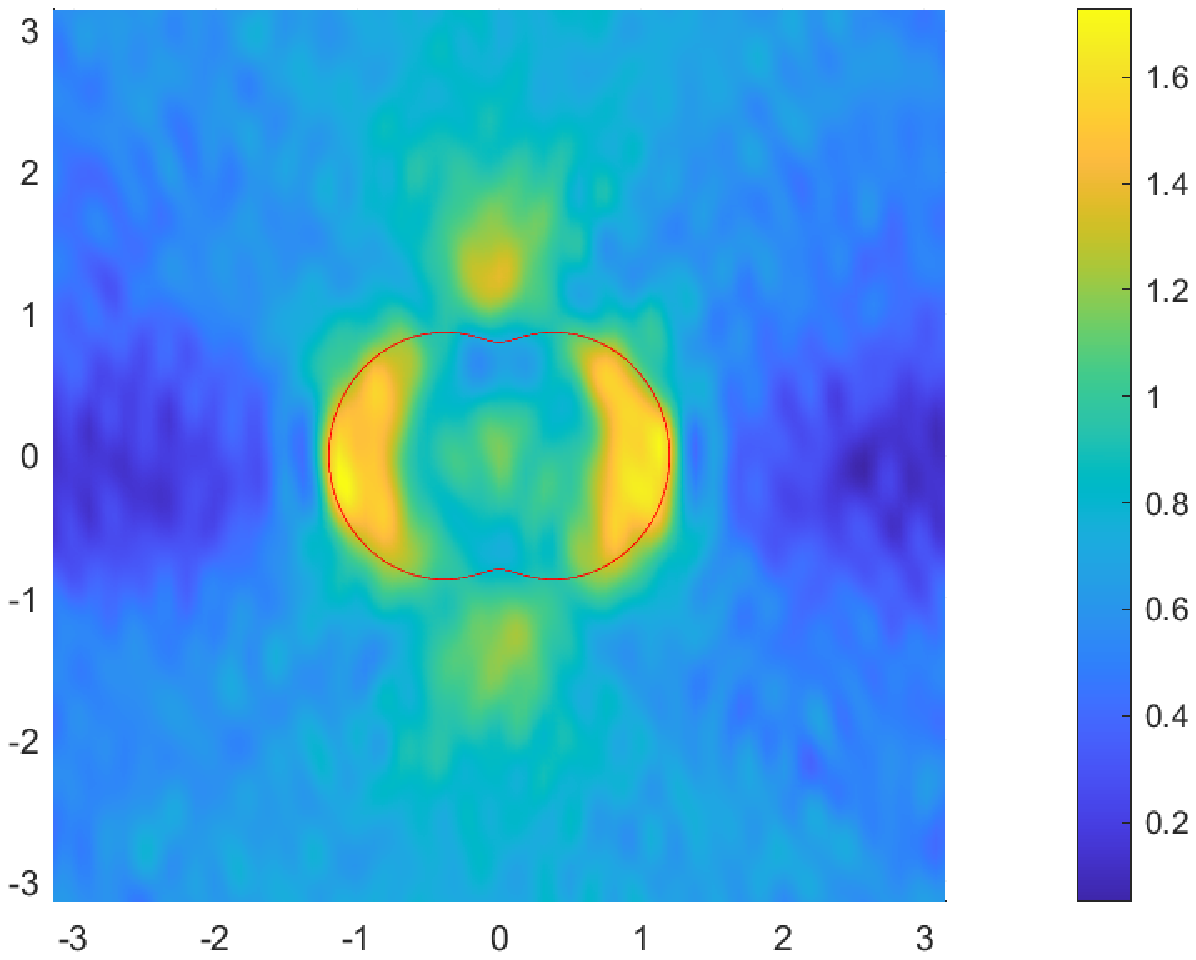}}
\subfloat[\label{fig7:d}60 $\%$ noise]{\includegraphics[width=4cm]{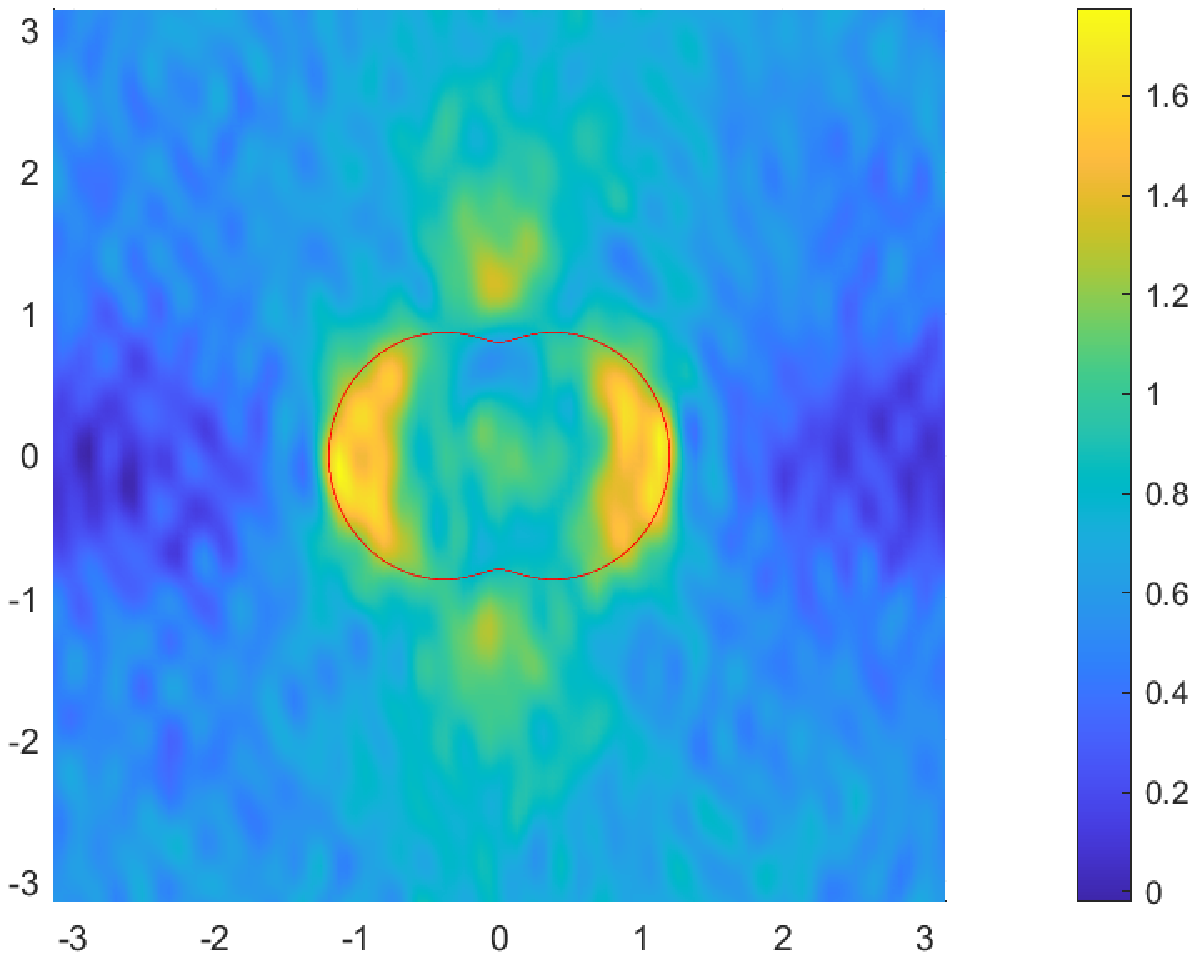}}
\caption{Reconstruction by lower RTM for penetrable peanut with noise level 10$\%$, 20$\%$, 40$\%$, 60$\%$.}\label{fig7}
\end{figure}
\begin{table}[htbp]
    \centering
    \begin{tabular}{c|c|c|c}
    \hline\hline\noalign{\smallskip}
    $\mu$ &  $\sigma$ & $|U^{s}_{\alpha}|_{l^{2}}$
    &$|V^{noise}_{\alpha}|_{l^2}$\\
    \noalign{\smallskip}\hline\noalign{\smallskip}
0.100000  &  0.139696 &  0.149150 &  0.047054 \\
0.200000  &  0.279392 &  0.149150 &  0.094292 \\
0.400000  &  0.558783 &  0.149150 &  0.188035 \\
0.600000  &  0.838175 &  0.149150 &  0.282063 \\
\noalign{\smallskip}\hline
    \end{tabular}
    \caption{Different levels of average signal and noises for upper RTM averaging over 9 different $\alpha$s.}
    \label{table2}
\end{table}
Here we use images of 9 different $\alpha$s, and (a) is the image of noise level of $10\%$, while (b)-(d)correspond to noise level of  $20\%$  to $60\%$. Here Figure \ref{fig7}  shows the imaging quality of $\mathcal{I}_{L}$ of the vertical part of penetrable periodic peanut. Figure \ref{fig8} shows the imaging quality of $\mathcal{I}_{U}$ of the horizontal part of penetrable peanut. The experiments demonstrate that even with large amount of additive noise in the received data, the imaging functional still give the image of the boundary of the obstacle arrays.
\begin{figure} 
\subfloat[\label{fig8:a}10 $\%$ noise]{\includegraphics[width=4cm]{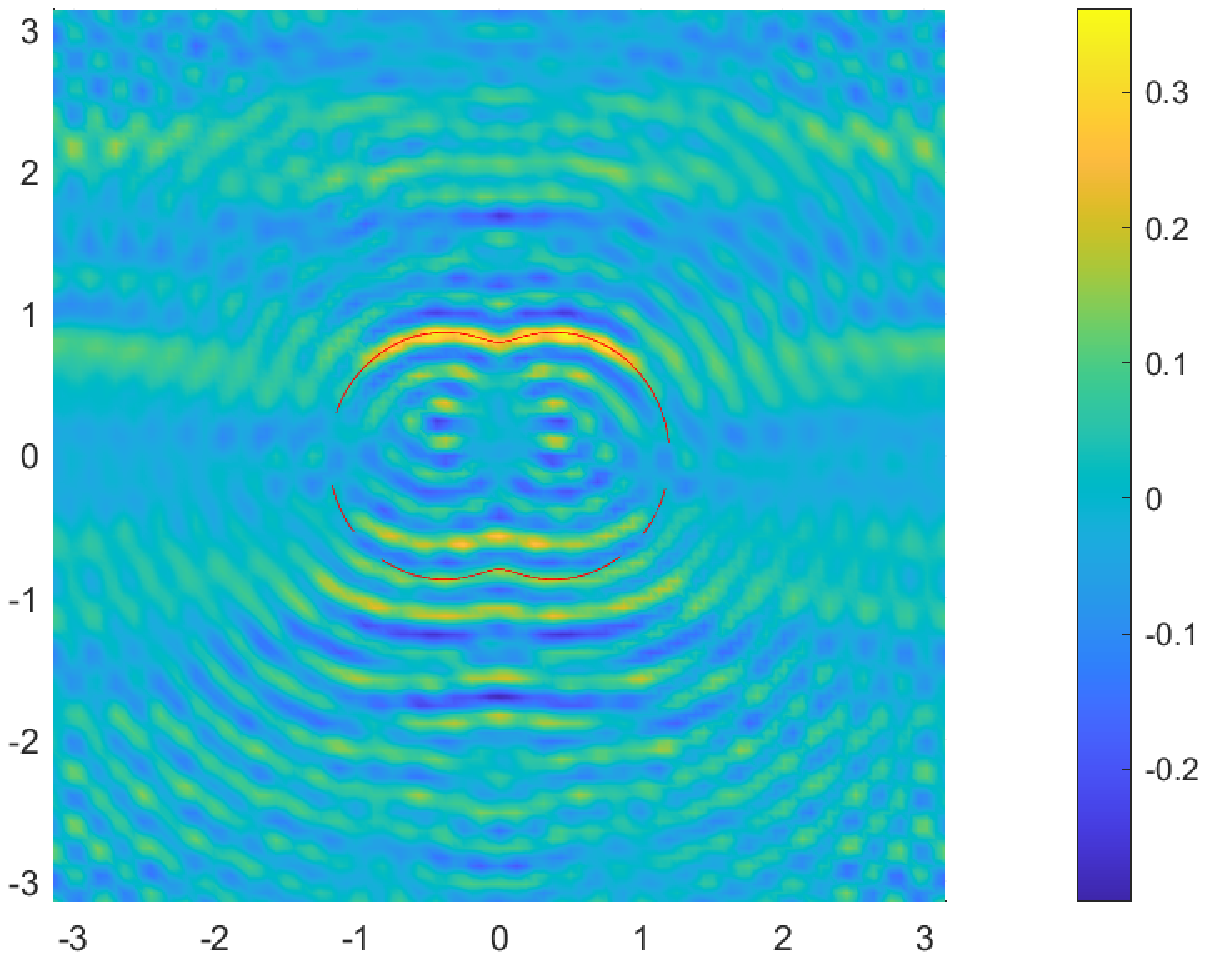}}
\subfloat[\label{fig8:b}20 $\%$ noise]{\includegraphics[width=4cm]{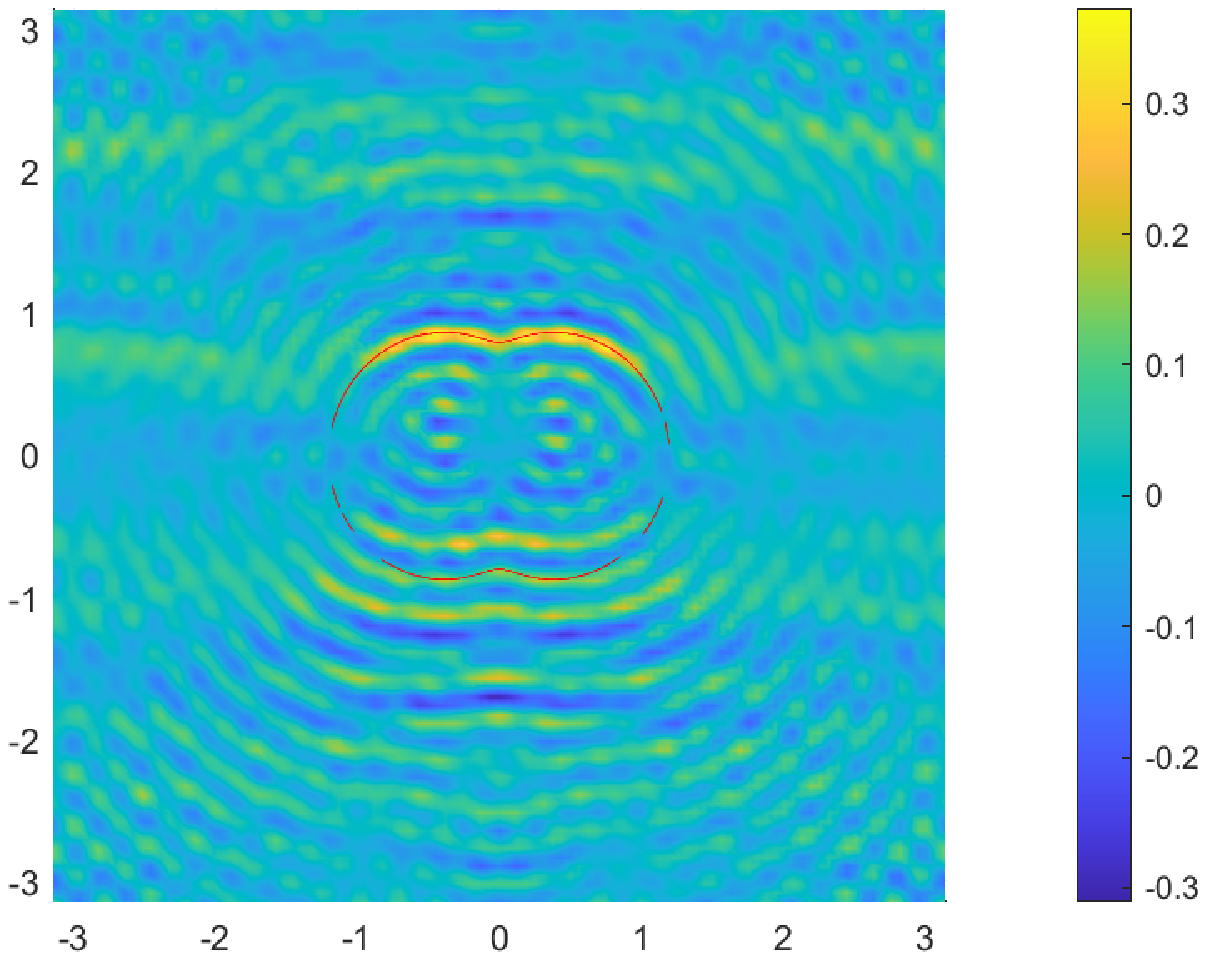}}
\subfloat[\label{fig8:c}40 $\%$ noise]{\includegraphics[width=4cm]{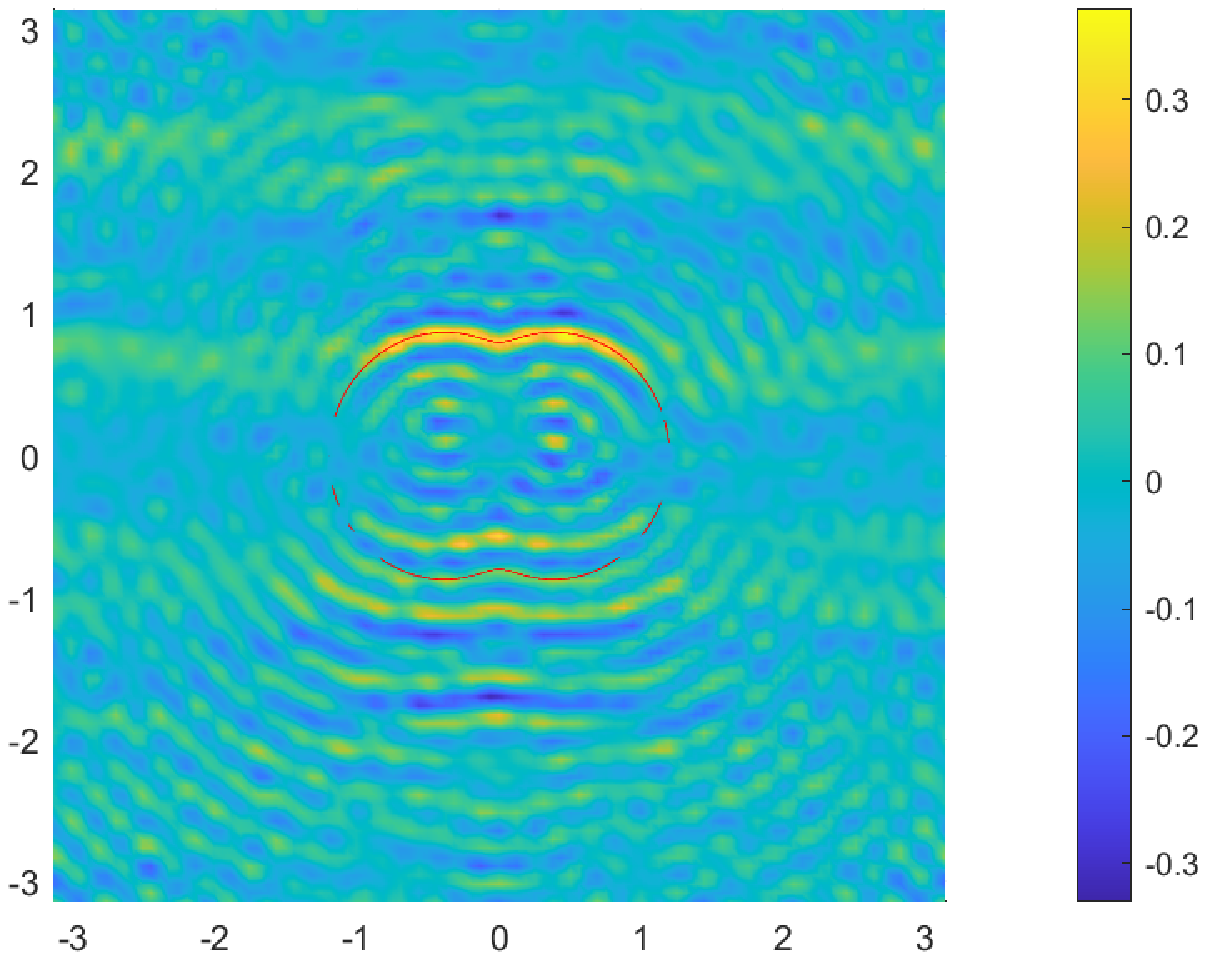}}
\subfloat[\label{fig8:d}60 $\%$ noise]{\includegraphics[width=4cm]{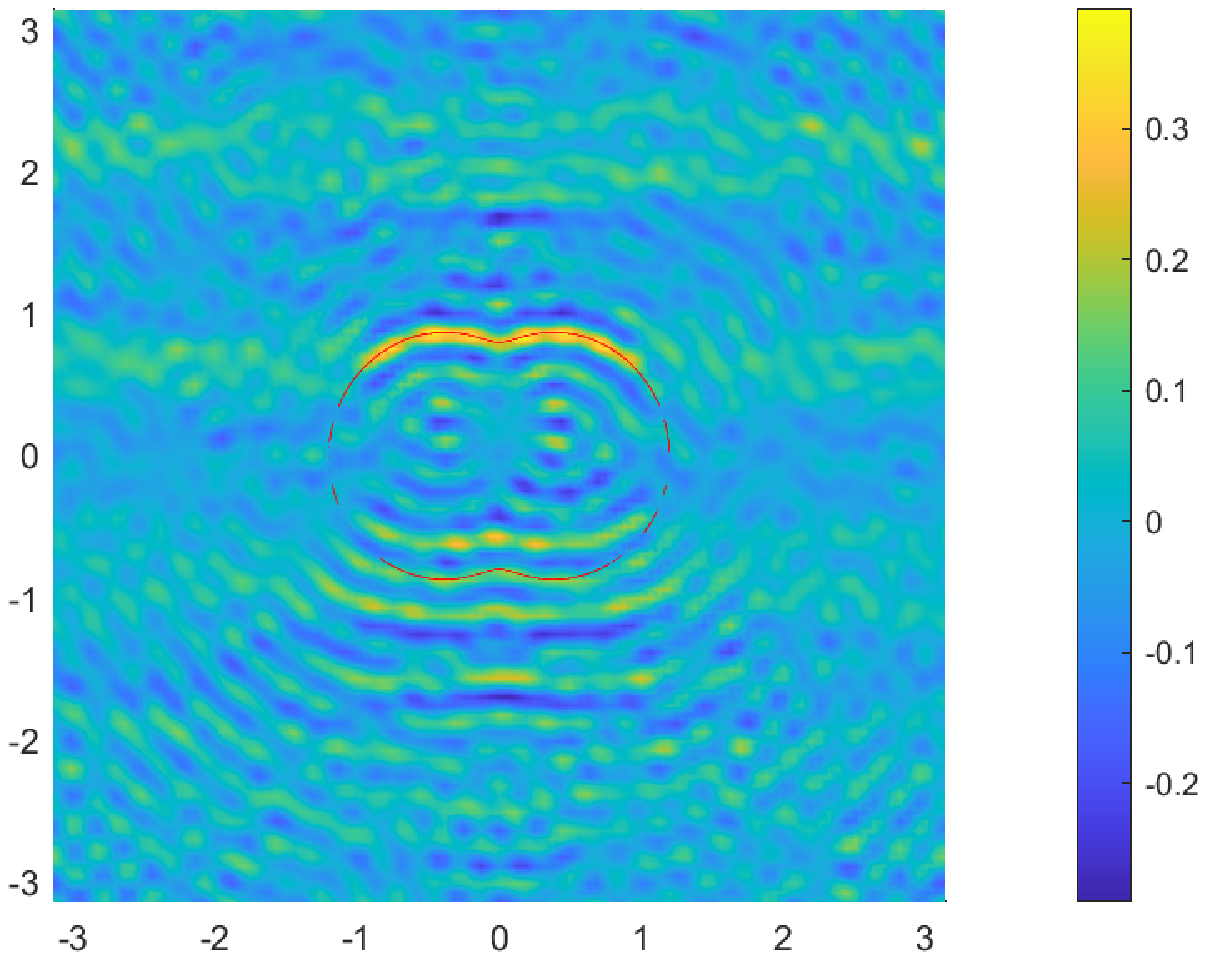}}
\caption{Reconstruction by upper RTM for penetrable peanut with noise level 10$\%$, 20$\%$, 40$\%$, 60$\%$.}\label{fig8}
\end{figure}

\end{document}